\documentclass[10pt,reqno]{amsart}

\usepackage{srcltx}
\usepackage{amsmath,amssymb,mathtools,amsfonts,amscd,cases,xcolor} \usepackage{hyperref}
\usepackage[capitalise]{cleveref}

\usepackage[top=2.7cm,bottom=2.7cm,left=2.5cm,
right=2.5cm]{geometry} \usepackage{mathrsfs} \usepackage{graphicx} \usepackage{upref}
\usepackage{dsfont}
\usepackage[utf8]{inputenc}
\usepackage{comment}

\usepackage[sort,nocompress]{cite}

\hypersetup{linkcolor=blue, colorlinks=true,citecolor = red}

\newtheorem{theorem}{Theorem}[section] 
\newtheorem{proposition}[theorem]{Proposition}
 \newtheorem{cor}[theorem]{Corollary} \newtheorem{remark}[theorem]{Remark} \newtheorem{defin}[theorem]{Definition}

\theoremstyle{definition}
 
\theoremstyle{remark}
\numberwithin{equation}{section}

\DeclareMathOperator*{\esssup}{ess\,sup}
 \newcommand{\mc}{\mathcal} \renewcommand{\leq}{\leqslant} \renewcommand{\geq}{\geqslant}
\newcommand{\Div}       {{\rm div}_x}

\newcommand{\F} {\mathcal{F}}
\renewcommand{\S} {\mathcal{S}}

\renewcommand{\d}{{\rm d} }

\newcommand{\R}{{\mathbb R}}

\newcommand{\veps}{\varepsilon}
\newcommand{\Q}{\mathbb{Q}}
\DeclareMathOperator{\ess}{\textnormal{ess}}
\DeclareMathOperator{\res}{\textnormal{res}}
\begin{document}

\title[Singular limits of fluid--rigid body system]{Boundary Layer Criterion for the 
Low Mach Number and Inviscid Limits in Compressible 
Fluid--Rigid Body Interaction}

\date{\today}

\author[K. Bhandari]{Kuntal Bhandari} \address{Institute of Mathematics, Czech Academy of Sciences, \v{Z}itn\'{a} 25, 11567 Praha 1, Czech Republic} \email{bhandari@math.cas.cz}

\author[\v{S}. Ne\v{c}asov\'{a}]{\v{S}\'{a}rka  Ne\v{c}asov\'{a}} \address{Institute of Mathematics, Czech Academy of Sciences, \v Zitn\' a 25, 11567 Praha 1, Czech Republic} \email{matus@math.cas.cz}
 
\author[A. Roy]{Arnab Roy}
\address{BCAM, Basque Center for Applied Mathematics, Mazarredo 14, E48009 Bilbao, Bizkaia, Spain}
\address{IKERBASQUE, Basque Foundation for Science, 
Plaza Euskadi 5, 48009 Bilbao, Bizkaia, Spain}
 \email{aroy@bcamath.org}

\begin{abstract}
We investigate the simultaneous low Mach number and vanishing viscosity limit for a three-dimensional compressible viscous fluid interacting with a freely moving rigid body. The fluid satisfies the compressible Navier--Stokes equations and the no-slip condition on the body surface, while the body evolves according to Newton’s laws driven by the fluid stresses. We establish convergence of finite-energy weak solutions to a strong solution of the corresponding incompressible Euler--rigid body system on its interval of existence. Assuming that the initial relative energy tends to zero, convergence holds under a Kato-type condition in a boundary layer whose thickness is proportional to the viscosity. This condition combines vanishing viscous dissipation with a weighted estimate on the fluid momentum relative to the rigid body motion. The proof uses the relative entropy method and a boundary layer corrector adapted to the fluid-rigid body coupling, providing a sufficient condition for convergence in relative energy as the Mach number and viscosity tend to zero simultaneously.
\end{abstract}

\maketitle

\noindent 
 {\bf Key words.} Compressible Navier-Stokes system, 
 fluid-rigid body interaction, inviscid  incompressible limit. 

\vspace{.2cm} 

\noindent 
 {\bf AMS subject classifications.}   76N06, 74F10, 35Q31.

\tableofcontents

%%%%%%%%%%%%%%%%%%%%%%%%%%%%%%%%%%%%%%%%%%%%%%%%%%%%%%%%%%%%%%%%%%%%%%

\section{Introduction}
The simultaneous low Mach number and vanishing viscosity limit connects two distinct descriptions of fluid-rigid body interaction: a rigid body immersed in a compressible viscous fluid and a rigid body moving in an incompressible inviscid fluid. In both systems, the body evolves according to Newton’s laws under the action of the fluid stresses. The fluid equations and the interface conditions, however, change in the limit. The density approaches a constant and the fluid velocity becomes divergence-free, while the viscous no-slip condition gives way to an impermeability condition that allows tangential slip along the body surface. The question is whether solutions of the compressible Navier-Stokes-rigid body system converge to those of the incompressible Euler-rigid body system as the Mach number and viscosity tend to zero.

This passage involves two singular mechanisms. The low Mach number limit introduces the incompressibility constraint and may generate rapid acoustic oscillations, whereas the vanishing viscosity limit is sensitive to boundary layers associated with the mismatch of tangential boundary conditions. For fluids in fixed domains, these mechanisms have been studied extensively \cite{MAS5, MAS1, KM1, Feireisl-Novotny-INVISCID, MR277929, Kato-zero-viscosity}. In the presence of a freely moving rigid body, the boundary motion is itself an unknown, coupled to the fluid through the balance of linear and angular momentum. Establishing the limit therefore requires controlling the fluid and body motions together, including the behaviour of the fluid relative to the body in a thin neighbourhood of its surface.

The motion of rigid bodies in incompressible viscous fluids has been extensively studied. Previous contributions to the existence theory of weak solutions include \cite{CoJoTu99,DeEs99,GLSE,HOST,SER3,DEES2,MR4223872}. Global weak formulations accommodating possible collisions between bodies or contact with the boundary were subsequently developed in \cite{F3,SaStTu02,CN}.

For compressible viscous fluids, Boulakia and Guerrero \cite{BG} established the existence and uniqueness of strong solutions in an $L^2$-based Sobolev framework for small initial data, as long as no collision occurs. Hieber and Murata \cite{HiMu15} proved local well-posedness in an $L^p$ framework using maximal regularity methods based on $\mathcal{R}$-boundedness. In a related stabilization problem, Roy and Takahashi \cite{roy2019stabilization}
 obtained global strong solutions for small initial data and asymptotic stabilization of a rigid ball under a feedback force acting on the body.

For weak solutions, existence under no-slip boundary conditions was established up to collision in \cite{DEES2}, without a smallness assumption on the initial data. Feireisl \cite{Feireisl-compressible-Rigid} subsequently constructed global weak solutions in a formulation that accommodates possible collisions. Weak–strong uniqueness for the compressible fluid–rigid body system was proved by Kreml, Nečasová and Piasecki \cite{KrNePi20}, using relative energy estimates and a change of variables to compare solutions on different fluid domains. Further existence results address Navier-slip boundary conditions \cite{NeRaMy22} and general inflow–outflow conditions on the outer boundary \cite{ANR}.

The corresponding inviscid problem has also been studied. Local existence and uniqueness of sufficiently regular solutions to the incompressible Euler–rigid body system were established in \cite{Tucsnak-JFA,Wang-Zang}. These solutions provide the reference dynamics for the singular limit considered here.

The analysis of the low Mach number limit includes the foundational work of Klainerman and Majda \cite{KM1}. For viscous compressible flows, Desjardins and Grenier \cite{DesGre} studied the limit in the whole space, while Desjardins, Grenier, Lions and Masmoudi \cite{DGLM} treated bounded domains with Dirichlet boundary conditions. Broader accounts of incompressible limits and the methods used to study them can be found in \cite{Gallag,SCH2,MAS1,FN}.

Related results are available for exterior flows with prescribed body motion. Kračmar, Nečasová and Novotný \cite{KNN} proved the existence of weak solutions for a compressible viscous fluid around a rotating obstacle. Eiter, Nečasová and Oschmann \cite{ENO} established weak-strong uniqueness and justified the low Mach number limit for exterior flows around a body undergoing prescribed rigid motion. In the latter result, the viscosity remains positive and the limiting equations are the incompressible Navier--Stokes equations.

The simultaneous incompressible and inviscid limit has been investigated in several fluid models. Masmoudi \cite{MAS5} established convergence from the isentropic compressible Navier--Stokes system to the incompressible Euler equations as the Mach number and viscosity tend to zero. For 
heat-conducting fluids, Feireisl and Novotný \cite{Feireisl-Novotny-INVISCID} studied the full Navier--Stokes--Fourier system in a regime of small Mach number and large Reynolds and Péclet numbers, identifying the Euler--Boussinesq system as the limit. Inviscid incompressible limits on expanding domains were considered by Feireisl, Nečasová and Sun \cite{Eduard-Sarka-Sun}.

For the vanishing viscosity limit in the presence of a fixed no-slip boundary, Kato \cite{Kato-zero-viscosity} identified the decisive role of dissipation near the boundary. Assuming convergence of the initial data and the existence of a smooth incompressible Euler solution, convergence in the energy norm is equivalent to the vanishing of viscous dissipation in a boundary layer whose thickness is proportional to the viscosity. Sueur \cite{MR2993933} extended this criterion to an incompressible fluid coupled to a freely moving rigid body, obtaining convergence of both the fluid and body velocities. For compressible fluids in a fixed bounded domain, Sueur \cite{Sueur-compressible-inviscid} established a sufficient condition for the inviscid limit under no-slip boundary conditions, using the relative energy framework developed in \cite{FeNoSun,FeJiNo}. These results provide the boundary layer and relative energy methods on which the present analysis builds.

\vspace{.2cm}

The present work establishes a Kato-type sufficient criterion for the simultaneous low Mach number and vanishing viscosity limit of the three-dimensional compressible Navier-Stokes-rigid body system. The result extends the boundary layer approach for incompressible 
fluid-body interaction \cite{MR2993933} to a setting in which incompressibility must also emerge in the limit. The body's translation and rotation are determined by Newton's laws throughout the evolution, so the convergence concerns the full coupled dynamics. The central issue is to control density fluctuations and viscous boundary layer contributions together with the differences between the viscous and inviscid body motions.

More precisely, we consider families of finite-energy weak solutions whose initial relative energy with respect to a strong incompressible 
Euler-rigid body solution tends to zero. We prove convergence in relative energy as the Mach number and viscosity vanish simultaneously, on every time interval on which the limiting solution remains regular. The sufficient boundary layer condition combines vanishing viscous dissipation with a weighted smallness condition on the fluid momentum relative to the body, both measured in a neighbourhood of the body surface whose thickness is proportional to the viscosity. The convergence controls the fluid motion and the translational and angular velocities of the rigid body.

The proof is organized around a relative energy estimate for the complete fluid--body system. Working in coordinates attached to the body places the viscous and inviscid problems on a common fixed exterior domain, while retaining the translational and rotational velocities in the transformed equations. A boundary layer corrector makes the Euler comparison field compatible with the no-slip condition. The central analytical task is to estimate the resulting correction terms together with the contributions of compressible transport and body motion, so that the fluid and solid errors are controlled within the same inequality. For initial data with vanishing relative energy, this approach shows that suitable control of the fluid relative to the body in a shrinking neighbourhood of the interface suffices to recover the full incompressible Euler--rigid body dynamics.

The paper is organized as follows. Section \ref{sec2} describes the coupled fluid–rigid body system. In Section \ref{sec3}, we reformulate the viscous and inviscid problems in coordinates attached to the body and state the main convergence result. Section \ref{sec4} is devoted to the proof of the main result: we present the relative energy inequality, construct a Kato-type boundary layer corrector, and combine the resulting estimates to establish the theorem.
\section{Model description}\label{sec2}

Let  $\mathcal{S}(t)\subset
\R^3$ be a closed, bounded domain occupied by a rigid body emersed in some  viscous compressible fluid. Accordingly, the fluid domain is denoted by $\mathcal{F}(t)=\R^3\setminus \overline{\mathcal{S}(t)
}$ at a time moment $t\in \R^{+}.$ The initial domain of the rigid body is denoted by $\mc{S}_0$ and is assumed to have a smooth boundary. Correspondingly, $\mc{F}_0 =\mathbb{R}^{3} \setminus \mc{S}_0$ is  the initial fluid domain. The interface between the body and the fluid is denoted
by $\partial \mathcal{S}(t)$. For $T > 0$  we introduce the following notations for the 
space-time domains 
\begin{equation*}
\begin{array}{l}
{Q}_{\mathcal{F}}= \bigcup_{t\in (0,T)} \{t\} \times \mathcal{F}(t),
%\{(t,\x)\in \R^4:\,\,t\in (0,T),\quad \x\in \mathcal{F}%(t)\},
\quad 
{Q}_{\mathcal{S}}=\bigcup_{t\in (0,T)} \{t\} \times \mathcal{S}(t) , 
\quad 
{Q}_{\partial \mathcal{S}}=\bigcup_{t\in (0,T)} \{t\} \times \partial\mathcal{S}(t) .
%\{(t,\x)\in{\R}^4:\,\,t\in (0,T),\quad \x\in \partial\mathcal{S}(t)\},
%\{(t,\x)\in{\R}^4:\,\,t\in (0,T),\quad \x\in\mathcal{S}(t)\},
%\\
%Q_T= (0,T) \times \R^3.
\end{array}
\end{equation*}%

%For $T>0$ given and for any time $t \in (0,T)$, let $\mathcal{S}(t) \subset \mathbb{R}^3$ denote a closed, bounded and simply connected  rigid body. We assume that the rest of the space, i.e., 
%$\mathbb{R}^{3} \setminus \mathcal{S}(t)=\mathcal{F}(t)$ is filled with a viscous compressible fluid. 
%The initial domain of the rigid body is denoted by $\mc{S}_0$ and is assumed to have a smooth boundary. Correspondingly, $\mc{F}_0 =\mathbb{R}^{3} \setminus \mc{S}_0$ is  the initial fluid domain.

The system of  rigid body moving in a fluid is in the first instance a moving domain problem in an inertial frame in which the velocity of the rigid body is described by
\begin{equation} \label{def-U_S} 
U_{\mc{S}}(t,x)= h'(t)+  \Omega(t) \times (x-h(t)) \ \  \text{for} \ \ (t,x) \in Q_{\mc S}. 
\end{equation} 
Here, $h'(t)$ is the linear velocity of the centre of mass $h(t)$ and $ \Omega(t)$ is the angular velocity of the body. 
The solid domain at time $t$ in the same inertial frame is given by
$\mc{S}(t)= \left\{h(t)+ \mathbb Q(t)y \mid y\in \mc{S}_{0}\right\}, $ where $\Q(t) \in SO(3)$ is associated to the rotation of the rigid body such that $\Q'  = \Omega \times \Q , \ \ \Q(0) = \mathbb{I}$.

%{Mathematically, it will turn out useful to define $U_{\mc{S}}$ for all $(t,x)\in  (0,T)\times \mathbb R^3$.}

Let $\varrho$, $U$ and $P(\varrho)$ be the density, velocity and pressure of the scaled viscous compressible  fluid, respectively, which satisfy 
\begin{align} 
\frac{\partial \varrho}{\partial t} + \operatorname{div}\,(\varrho U) &=0, \ \ 
\varrho(x) \geq 0  \quad \mbox{ in } Q_{\mc F} ,  \label{mass:fluid} \\ 
\frac{\partial }{\partial t}(\varrho U) + \operatorname{div}\left(\varrho U
\otimes U\right) + \frac{1}{\varepsilon^2}\nabla P(\varrho) &= \nu\operatorname{div}(\Sigma(U)) \quad  \ \mbox{ in } Q_{\mathcal{F}}.
\label{momentum:fluid}
\end{align}
Here $\varepsilon$, $\nu$ are respectively the Mach number and inverse of Reynolds number, and the viscous stress tensor is defined by  
$$\Sigma (U) = \mu\left(\nabla U + \nabla^\top U - \frac{2}{3}\operatorname{div}U\mathbb{I}\right) + \eta\operatorname{div}U\mathbb{I},$$
where $\mu>0$,  $\eta \geq 0$ are two viscosity coefficients. 
We assume that the flow is in the barotropic regime and we focus on the isentropic case where the relation between $P(\varrho)$ and $\rho$ is given by the constitutive law:
\begin{equation}\label{P-form}
P(\varrho)=a\varrho^{\gamma},\mathrm{\, with \,}\ a>0 \ \mathrm{ and \ the\ adiabatic\ constant \ }\gamma>3/2.
\end{equation}
Let $\varrho_{\mc{S}}$ denote the density of the rigid body. Then $\displaystyle m = \int_{\mc{S}(t)} \varrho_{\mc{S}}(t,x) \, dx$ is its total mass, which is constant in time and does not change under the coordinate transformation below. Further, 
the moment of inertia $J(t)$ is defined by
$$J(t) =  \int\limits_{\mc{S}(t)} \rho_{\mc{S}}(t,x) 
 \Big(|x-h(t)|^2\mathbb{I} - (x-h(t))\otimes (x-h(t))\Big)\, dx. $$
The ordinary differential equations modeling the dynamics of the rigid body then read as 
\begin{align} 
mh''&= -\int\limits_{\partial \mc{S}(t)} \left(\nu\Sigma (U)- \frac{1}{\veps^2} P(\varrho)\mathbb{I} \right) N\, d\Gamma ,\label{body1}
\\ 
(J{\Omega})' &= -\int\limits_{\partial \mc{S}(t)} (x-h) \times \left(\nu\Sigma (U)- \frac{1}{\veps^2}P(\varrho)\mathbb{I} \right) N\, d\Gamma,\label{body-2}
\end{align} 
where $N$ is the unit outward normal to the boundary of $\mathcal{F}(t)$, i.e., directed towards $\mathcal{S}(t)$. We consider $J(0)=J_0$
and it satisfies the following relation $J=\Q J_0 \Q^T.$ 
We further assume that the density and the velocity of the fluid satisfy:
\begin{equation}\label{13:10}
\varrho(t,x)\rightarrow 1,\quad U(t,x) \rightarrow 0 \,  \mbox{ as } \,  |x| \rightarrow \infty .
\end{equation}
%where $\overline{\rho}>0$.
The system is supplemented with  the following initial conditions: \begin{equation}\label{initialcond} 
\begin{aligned}
&\varrho(0,x)=\rho_0 \geq 0 ,  \ \,   \varrho U(0,x)=q_0 \,  \text{ in } \mc{F}_0,  \\
&h(0)=0, \ \  h'(0)=\ell_{0}, \ \ {\Omega}(0)= \omega_{0}. 
\end{aligned}
\end{equation}
Finally, 
we consider the continuity condition on the  fluid--rigid  interface, namely  
\begin{equation}\label{b.c-1} 
U= U_{\mc{S}} \quad \mbox{ on } Q_{\partial \mc{S}}.
\end{equation}

We want to investigate the issue of the incompressible inviscid limit of the system \eqref{mass:fluid}--\eqref{b.c-1}, that is, when $\veps \to 0$ and $\nu \to 0$. For the limiting system, let us denote by $\mathcal{S}^E(t)\subset
\R^3$, the domain occupied by the rigid body  and   $\mathcal{F}^E(t)=\R^3\setminus \overline{\mathcal{S}^E(t)
}$, the domain filled with the fluid at a time moment $t\in \R^{+}.$ The interface in this case is denoted by $\partial \mc{S}^E(t)$.   We assume that the initial
position  of the rigid body, denoted by $\mc{S}^E(0)$,   is the same as the  former system, i.e.,  $\mathcal{S}^E(0)=\mc{S}_0$. 
Accordingly, the initial fluid domain is  
$\mathcal{F}^E(0):=\mc{F}_0$.
For $T> 0$, we now introduce the following notations for the space-time domains: 
\begin{equation*}
\begin{array}{l}
{Q}_{\mathcal{F}}^E= \bigcup_{t\in (0,T)} \{t\} \times \mathcal{F}^E(t),
\quad 
{Q}_{\mathcal{S}}^E=\bigcup_{t\in (0,T)} \{t\} \times \mathcal{S}^E(t),
\quad 
{Q}_{\partial \mathcal{S}}^E=\bigcup_{t\in (0,T)} \{t\} \times \partial\mathcal{S}^E(t) . 
\end{array}
\end{equation*} 
When $\varepsilon\rightarrow 0$ and $\nu \rightarrow 0$, we anticipate that formally the system \eqref{mass:fluid}--\eqref{b.c-1} degenerates into the following target system:
\begin{align}   
\frac{\partial}{\partial t} U^E  + \operatorname{div}\left(U^{E}
\otimes U^{E}\right) + \nabla P^{E} &= 0 \quad \mbox{ in } Q^E_{\mc F}, 
\label{momentum:fluidE} \\
\operatorname{div}\,U^{E} &=0  \quad \mbox{ in } Q^E_{\mc F} , 
\label{mass:fluidE}
\end{align}
with the interface condition: 
\begin{align} 
U^{E}\cdot N^E  = U^{E}_{\mc{S}}\cdot N^E  \quad \mbox{ on } Q_{\partial \mc{S}}^{E} , 
\label{b.c-1E}
\end{align}
the dynamics of the rigid body:
\begin{align} m(h^{E})''&= \int\limits_{\partial \mc{S}^{E}(t)} P^{E} N^E\, d\Gamma ,\label{body1E}
\\ 
(J^{E}{\Omega}^{E})' &= \int\limits_{\partial \mc{S}^{E}(t)} (x-h^{E}) \times P^{E} N^E \, d\Gamma  ,\label{body-2E}
\end{align} 
and the initial conditions: 
\begin{equation}\label{initialcondE} 
 U^{E}(0,\cdot)=U_0^{E}  , \ \,  h^{E}(0)=0, \ \,  (h^{E})'(0)=\ell_{0}^{E}, \ \, {\Omega}^{E}(0)=\omega^{E}_{0}. 
\end{equation}
In the above set of equations, the velocity of the rigid body is given by 
\begin{equation*}
U^{E}_{\mc{S}}(t,x)= (h^{E})'(t) + {\Omega}^{E}(t) \times (x-h^{E}(t)) , 
\end{equation*}
where $(h^E)'(t)$ is the linear velocity of the centre of mass $h^E(t)$ and $\Omega^E(t)$ is the angular velocity of the body. 
The solid domain at time $t$  then reads as 
$\mc{S}^{E}(t)= \left\{h^{E}(t)+ \Q^{E}(t)y \mid y\in \mc{S}_{0}\right\}$,
where the rotational matrix $\Q^{E}$ satisfies 
$(\Q^{E})'= {\Omega}^{E} \times \Q^{E}, \quad \Q^{E}(0)=\mathbb{I}$.
Finally,  $J^E(0)=J_0$ and  $J^{E}$ is given by 
$J^{E} = {\Q}^{E} J_0 (\Q^{E})^{\top}$.

%%%%%%%%%%%%%%%%%%%%%%%%%%%%%%%

\section{Reformulation and the main result}\label{sec3}

 In this section, we  rewrite the systems \eqref{mass:fluid}--\eqref{b.c-1} and \eqref{momentum:fluidE}--\eqref{initialcondE} in a fixed domain and  analyse the existence results of such transformed systems.

 \subsection{Case of the Navier--Stokes equations} 
 
 We apply a global change of variables that transforms the system in such a way that it is formulated in a frame which is attached to the rigid body (see, for instance, \cite{Galdi-book-chapter,Book-Sarka-et-al}).  At time $t=0$, the two frames are assumed to coincide with $h(0) =0$. Hence the transformed system is posed on the fixed domain  $(0,T)\times\mc{F}_{0}$ via the following change of variables: 
\begin{equation*}
u(t,y) =\Q(t)^{\top}U(t, \Q(t)y+h(t)), \quad \rho(y,t)=\varrho(t, \Q(t)y+h(t)).
\end{equation*}
%\begin{equation*}
% p(y,t)=P(Q(t)y+h(t),t).
%\end{equation*} 
%Further, the moment of inertia transforms to $$ J_0 = \Q(t)^\top J(t) \Q(t), \quad t\in (0,T).$$
By \eqref{def-U_S}
and the extension of $U_{\mc{S}}$ to the whole space, we have that 
\begin{equation*} 
u_{\mc{S}}(t,y) = \Q(t)^\top U_{\mc{S}}(t,\Q(t)y + h(t))  =  \ell(t) + \omega(t) \times y, \quad  (t,y)\in  (0,T) \times \mathbb R^3,
\end{equation*} % \F_0
where $\ell(t) = \Q(t)^{\top}h'(t)$ is the transformed linear velocity and $\omega(t)=\Q(t)^{\top}{\Omega}(t)$ the transformed angular velocity of the rigid body. 
Equations \eqref{mass:fluid}--\eqref{initialcond} can be rewritten in the fixed domain as 
\begin{align} 
\frac{\partial\rho}{\partial t} + \operatorname{div}\,(\rho (u-u_{\mc{S}})) &=0, \ \,  \rho(y) \geq 0 \quad \mbox{ in } (0,T)\times  \mc{F}_0 , \label{chg of var mass:fluid} \\ \frac{\partial }{\partial t}(\rho u) + \operatorname{div}\left[\rho(u-u_{\mc{S}})\otimes u\right] + \rho \omega \times u +\frac{1}{\varepsilon^2} \nabla p &= \nu\operatorname{div}(\sigma(u)) \quad \ \mbox{ in } (0,T)\times  \mc{F}_0 , \label{chg of var momentum:fluid} 
\end{align} 
\begin{align} 
m\ell'&= -\int\limits_{\partial \mc{S}_{0}} \left(\nu\sigma (u)-
\frac{1}{\varepsilon^2} p(\rho)\mathbb{I}\right) n\, d\Gamma + { m \ell \times \omega}, \label{chg of var linear momentum:body}\\ 
J_{0}\omega' &= -\int\limits_{\partial \mc{S}_{0}} y \times \left(\nu\sigma (u)- \frac{1}{\varepsilon^2}p(\rho)\mathbb{I}\right) n\, d\Gamma  + J_{0}\omega \times \omega, \label{chg of var angular momentum:body} 
\end{align} 
with the initial conditions
\begin{equation}
\begin{aligned}\label{chg of var initial cond} 
&\rho(0,y)=\rho_0 \geq 0,\  \, \rho u(0,y)=q_0 \ \, \text{in }\, \mc F_0, 
\\
&h(0)=0,\ \, \ell(0)=\ell_{0},\ \, \omega(0)=\omega_{0}, 
\end{aligned}
\end{equation} 
where $n(t,y) = \Q(t)^\top N(\Q(t)y + h(t))$ for $(t,y) \in (0,T)\times \partial \mc{S}_0$ denoting the inward pointing normal to $\partial \mc{S}_0$ and

$$ \sigma (u) = \mu\left(\nabla u + \nabla^\top u - \frac{2}{3}\operatorname{div}u\mathbb{I}\right) + \eta\operatorname{div}u\mathbb{I}.$$
It follows from relation \eqref{P-form} that $p(\rho)$ is given by:
\begin{equation}\label{P-form-new}
p(\rho)=a\rho^{\gamma} ,\mathrm{\, with \,}\ a>0 \ \mathrm{ and \ the\ adiabatic\ constant \ }\gamma>3/2.
\end{equation}
 It follows from \eqref{13:10} that $(\rho, u)$ satisfies
$\rho(t, y)\rightarrow 1,\ u(t,y) \rightarrow 0$ as $|y| \rightarrow \infty$. We also introduce here the notation $H(\rho)$ for the pressure potential:
\begin{equation}\label{P-potential}
 H(\rho)=\rho\int\limits_{1}^{\rho} \frac{p(z)}{z^2}\ dz.
 \end{equation}
In terms of the new state variable, the interface condition \eqref{b.c-1} is  
\begin{equation}\label{chg of variable boundary:no-slip} 
u= u_{\mc{S}} \quad \mbox{ on } Q_{\partial \mc{S}_0}.
\end{equation}
Now, we give the 
meaning of solution to the  
system 
\eqref{chg of var mass:fluid}--\eqref{chg of variable boundary:no-slip}. In the rest of the paper, $\rho$, $u$ stand for the following extended version: $\forall\  t\in [0,T]$, 
  \begin{equation*}
  \rho (t,y) = \begin{cases}
  \rho (t,y), \quad y \in \mc{F}_0, \\
  \rho_{\mc{S}}(t,y), \quad y \in \mc{S}_0 , 
  \end{cases} \quad 
  u(t,y) = \begin{cases}
  u(t,y), \quad  y \in\mc{F}_0, \\
  u_{\mc{S}}(t,y),\quad y \in\mc{S}_0.
  \end{cases}
  \end{equation*}
Let us introduce the following space
\begin{equation*}
\mc{H}=\left\{ \phi \in L^2(\mathbb{R}^3) \mid  \mathbb{D}(\phi)=0 \mbox{ in some open neighborhood of 
}\overline{\mc{S}_{0}} \right\} ,
\end{equation*}
where $\mathbb{D}(\phi)= \frac{1}{2}(\nabla \phi + \nabla^\top \phi)$. 
According to \cite[Lemma 1.1, p.18]{Temam-plasticity},  for any $\phi \in \mc{H}$, there exist $\ell_{\phi} \in \mathbb{R}^3$ and $\omega_{\phi} \in \mathbb{R}^3$ such that 
$\phi(y)= \ell_{\phi}
+ \omega_{\phi} \times y  \quad \mbox{ for all }y \in \mc{S}_{0}$. 
We hereby denote $\phi_{\mc S}(y) : = \ell_\phi + \omega_\phi \times y$  for all $y \in \mc{S}_{0}.$ We consider the space $L^2(\mathbb{R}^3)$ with the following inner product: 
\begin{equation*} 
\left(\phi, \psi\right)_{\mc{H}} = \int\limits_{\mc{F}_{0}} \rho\phi \cdot \psi\,dy+ \int\limits_{\mc{S}_0} \rho_{\mc{S}} \phi \cdot \psi \, dy. 
\end{equation*}
In fact, for $\phi, \psi \in \mc{H}$, we have 
\begin{equation}\label{inner product:W} 
\left(\phi, \psi\right)_{\mc{H}} = \int\limits_{\mc{F}_{0}} \rho \phi \cdot \psi\,dy + m\ell_{\phi}\cdot \ell_{\psi} + J_{0} \omega_{\phi}\cdot \omega_{\psi}. 
\end{equation}
 The norm associated with the above inner product is denoted by $\|\cdot\|_{\mc{H}}=\left(\cdot, \cdot\right)^{1/2}_{\mc{H}}$. We  further define the space 
\begin{equation*} 
 {\mc{V}}=\left\{ \phi \in {\mc H} \mid \int\limits_{\mathbb{R}^{3}} |\nabla \phi(y)|^2  (1+|y|^2)\, dy < \infty \right\}, 
 \end{equation*}
 with its norm defined as
 \begin{equation}\label{norm-V}
 \|\phi\|_{\mc{V}} = \|\phi\|_{\mc H} + \|\nabla \phi\|_{L^2(\mathbb{R}^{3}, (1+|y|^2)^{\frac{1}{2}} dy )}. 
 \end{equation}
 
Let us  write  the following definition.  
\begin{defin}\label{23:54}
Let $T> 0$ and  
assume that the initial data satisfy 
  \begin{align} \label{init}
  \rho_{0} \in L^{\gamma}(\mathbb{R}^3),\quad \rho_{{0}} \geq 0 &\mbox{ a.e. in }\mathbb{R}^3,\\
  \label{init1}
  q_{{0}} \in L^{\frac{2\gamma}{\gamma+1}}(\mathbb{R}^3), \quad q_{{0}}\mathds{1}_{\{\rho_{0}=0\}}=0 &\mbox{ a.e. in }\mathbb{R}^3,\quad \dfrac{|q_{{0}}|^2}{\rho_{0}}\mathds{1}_{\{\rho_{0}>0\}}\in L^1(\mathbb{R}^3),\\
  \label{init2}
  \frac{q_0}{\rho_0}= \ell_0+ \omega_0 \times y &\mbox{ on } \mc{S}_0 , \ \ \ell_0,\, \omega_0\in \R^3.
  \end{align}
%$\rho_0 \in L^\gamma(\mathbb R^3)$ and $q_0\in L^\frac{2\gamma}{\gamma+1}(\mathbb R^3)$, $\gamma>3/2$. 
Then, a pair $(\rho,u)$ is a weak solution to system \eqref{chg of var mass:fluid}--\eqref{chg of variable boundary:no-slip} if the following conditions hold true:
\begin{itemize} \item $\rho \geq 0$,  
${\rho-1 \in L^{\infty}(0,T; L^\gamma(\mathbb R^3))}$.

\item $u \in L^\infty(0,T; \mc H) \cap L^2(0,T; H^1(\mathbb R^3) )$. 

\item The equation of continuity \eqref{chg of var mass:fluid} is satisfied in the weak sense, i.e., 
\begin{equation}\label{weakform-continuity} \int\limits_{0}^{t}\int\limits_{\mathbb R^3} \left[\rho \frac{\partial \psi}{\partial s} + \rho (u-u_{\mc{S}})\cdot  \nabla \psi\right]\,dy\,ds = \int\limits_{\mathbb R^3} \rho(t,y)\psi(t,y) \, dy  - 
\int\limits_{\mathbb R^3}  \rho_0\psi(0,y) \, dy ,  
\end{equation} 
for any test function $\psi \in C_c^{\infty}([0,T]\times \mathbb R^3)$.

Also, a renormalized continuity equation holds in a weak sense, i.e., \begin{equation*}  \int\limits_{0}^{t}\int\limits_{\mathbb{R}^3} b(\rho)\left[\frac{\partial \psi}{\partial s} + (u-u_{\mc{S}}) \cdot  \nabla \psi\right]\,dy\,ds 
= \int\limits_{\mathbb R^3}b(\rho(t,y))\phi(t,y) \, dy  - \int\limits_{\mathbb{R}^3} b(\rho_0)\psi(0,y) \, dy ,
\end{equation*} 
for any test function $\psi \in C_c^{\infty}([0,T]\times \mathbb{R}^3)$ and for all $b \in C^1(\mathbb{R})$. 
\item Balance of linear momentum holds in a weak sense, i.e., for all %\phi \in \C^H^1(0,T;L^2(\mathbb{R}^3)) \cap L^2(0,T;H^1(\mathbb{R}^3))$ and for all $t \in [0,T]$, 
$\phi \in C^\infty([0,T]; \mc{H} \cap  C^\infty_c(\mathbb R^3))$,  the following relation holds:
\begin{multline}\label{weakform-momentum}
 \int\limits_{\mc{F}_{0}} \rho u(t,y)\cdot \phi(t,y)\,dy + m\ell(t)\cdot \ell_{\phi}(t)
 + J_{0}\omega(t)\cdot \omega_{\phi}(t)
  - \int\limits_{\mc{F}_{0}} q_0 \cdot \phi(0,y)\,dy - m\ell_0\cdot \ell_{\phi}(0) - J_{0}\omega_0\cdot \omega_{\phi}(0)  \\
=  \int\limits_{0}^{t} \int\limits_{\mc{F}_{0}}\rho u \cdot \frac{\partial \phi}{\partial s}\,dy\,ds + \int\limits_{0}^{t}m\ell\cdot \ell'_{\phi}\,ds
+ \int\limits_{0}^{t}J_{0}\omega\cdot \omega'_{\phi}\,ds +\int\limits_{0}^{t}\int\limits_{\mc{F}_{0}} \left[ \rho(u-u_{\mc{S}})\otimes u \right] : \mathbb D
(\phi)\,dy\,ds   
  - \int\limits_{0}^{t}\int\limits_{\mc{F}_{0}} \operatorname{det} (\rho \omega, u, \phi)\,dy\,ds \\ + \int\limits_{0}^{t}
\operatorname{det}(m \ell,\omega,\ell_{\phi})\,ds
 + \int\limits_{0}^{t} \operatorname{det}(J_{0}\omega,\omega,\omega_{\phi}) \,ds  
 -  \int\limits_{0}^{t}\int\limits_{\mc{F}_{0}} \left(\nu \sigma(u) - \frac{1}{\varepsilon^2}p(\rho)\mathbb{I}\right): \mathbb D (\phi)\,dy\,ds .
 \end{multline}

\item Moreover, the following inequality holds: 
\begin{align}\label{energy}
\frac{1}{2}\int\limits_{\R^3}\rho |u|^2 (t,\cdot) \, dy  + \frac{1}{\veps^2} \int\limits_{\F_0} H(\rho)(t,\cdot) \, dy + \nu \int\limits_0^t \int\limits_{\F_0} \sigma(u) : \mathbb{D}(u) \, dy ds \notag \\
\leq \frac{1}{2}\int\limits_{\R^3} \frac{|q_0|^2}{\rho_0} \mathds{1}_{\{\rho_0>0\}}dy + \frac{1}{\veps^2}\int\limits_{\mc F_0} H(\rho_0) \, dy 
\end{align}
for a.a. $t\in (0,T)$. 
\end{itemize}
\end{defin}

\begin{remark}\label{Remark-detailed-norms}
Observe that, we can use the definition of the inner product  of the space $\mc{H}$ to write
\begin{multline*}
\int\limits_{\mc{F}_{0}} \rho u(t,y)\cdot \phi(t,y)\,dy + m\ell(t)\cdot \ell_{\phi}(t)
 + J_{0}\omega(t)\cdot \omega_{\phi}(t)
  - \int\limits_{\mc{F}_{0}} q_0\cdot \phi(0,y)\,dy - m\ell_0\cdot \ell_{\phi}(0) - J_{0}\omega_0\cdot \omega_{\phi}(0)\\ = \left(u,\phi\right)_{\mc{H}}(t) - \left(\frac{q_0}{\rho_0},\phi(\cdot,0)\right)_{\mc{H}}
\end{multline*}
in the momentum balance \eqref{weakform-momentum}.
Similarly, 
\begin{equation*}
\int\limits_{\mc{F}_{0}}\rho u \cdot \frac{\partial \phi}{\partial s}\,dy\,ds + m\ell\cdot \ell'_{\phi}\,ds
+ J_{0}\omega\cdot \omega'_{\phi}\,ds = \left(u,\frac{\partial \phi}{\partial s}\right)_{\mc{H}}.
\end{equation*}
If there is no confusion, for simplicity, we shall often abuse the notations:  
%$(\phi, \psi)_{\mc{H}} = \int_{\mathbb R^3} \rho \phi\cdot  \psi \, dy$
%and according to the definition, one can observe that
%%%
\begin{align}\label{convention}
\int\limits_{\mathbb R^3} \rho \phi \cdot \psi \, dy   =  (\phi, \psi)_{\mc H} =  \int\limits_{\F_0} \rho \phi\cdot  \psi \, dy  + m\ell_\phi\cdot \ell_\psi + J_0 \omega_\phi \cdot \omega_\psi .
\end{align}
Therefore, $u\in L^\infty(0,T;\mc H)$ is equivalent to write $\rho|u|^2 \in L^{\infty}(0,T; L^1(\mathbb{R}^3))$ with 
\begin{align}
\int\limits_{\R^3}\rho |u|^2(t,y)\, dy  = \int\limits_{\F_0} \rho|u|^2(t,y) \, dy + m|\ell(t)|^2 + J_0|\omega(t)|^2 = \| u(t)\|^2_{\mc H} . 
\end{align}
\end{remark}

{ We can establish the following existence result for the weak solution to system \eqref{chg of var mass:fluid}--\eqref{chg of variable boundary:no-slip}, see for instance \cite[Theorem 4.1]{Feireisl-compressible-Rigid} and \cite{KNN}.}
 \begin{theorem}\label{NS-body}
  Let $\mc{S}_0 \subset \mathbb{R}^3$ be a $C^{1,1}$ bounded,  closed, simply connected set and $\mc{F}_0= \mathbb{R}^3\setminus \mc{S}_0$. 
 % \begin{align} \label{init}
 % \rho_{0} \in L^{\gamma}(\mathbb{R}^3),\quad \rho_{{0}} \geq 0 &\mbox{ a.e. in }\mathbb{R}^3,\\
 % \label{init1}
 % q_{{0}} \in L^{\frac{2\gamma}{\gamma+1}}(\mathbb{R}^3), \quad q_{{0}}\mathds{1}_{\{\rho_{0}=0\}}=0 &\mbox{ a.e. in }\mathbb{R}^3,\quad \dfrac{|q_{{0}}|^2}{\rho_{0}}\mathds{1}_{\{\rho_{0}>0\}}\in L^1(\mathbb{R}^3),\\
%  \label{init2}
%  \frac{q_0}{\rho_0}= \ell_0+ \omega_0 \times (y-h_0)&\mbox{ on } \mc{S}_0 \mbox{ and }\ell_0,\, \omega_0, \, h_0 \in \mathbb{R}^3.
%  \end{align}
 Then,  for any $T>0$, with the choices of initial data \eqref{init}--\eqref{init2}, there exists a weak solution $(\rho,u)$ to system \eqref{chg of var mass:fluid}--\eqref{chg of variable boundary:no-slip}   in the sense of Definition \ref{23:54}. 
 %
 %Moreover, the following energy inequality holds for almost every $t\in (0,T)$:
%\begin{align}\label{energy}
%\int\limits_{\mathbb{R}^3}\frac{1}{2} \rho|u|^2(t, \cdot) \, dy  + \frac{1}{\varepsilon^2} \int\limits_{\mc{F}_0}  H(\rho)(t,\cdot) d y & 
%+\nu \int\limits_0^t\int\limits_{\mc{F}_0} \sigma(u): \mathbb D(u) \, dy \, ds  \notag \\
% & \leq  
 % \int\limits_{\mathbb{R}^3} \frac{1}{2}\dfrac{|q_{{0}}|^2}{\rho_{0}}\mathds{1}_{\{\rho_{0}>0\}} \, dy +\frac{1}{\varepsilon^2} \int\limits_{\mc{F}_0} H(\rho_0) \, dy .
%\end{align}
\end{theorem}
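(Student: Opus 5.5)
The existence statement in Theorem~\ref{NS-body} is, for fixed $\varepsilon,\nu>0$, a rescaled version of the global existence theory for compressible fluid--rigid body systems. The plan is therefore to reduce it to \cite[Theorem 4.1]{Feireisl-compressible-Rigid}, which is posed in the inertial frame, and then transport the solution to the body frame. Two adaptations are needed. The scaling factors $1/\varepsilon^2$ and $\nu$ are harmless: setting $\tilde p = \varepsilon^{-2}p$ and $\tilde\mu=\nu\mu$, $\tilde\eta=\nu\eta$ turns the system into one with unit parameters. The exterior unbounded domain with far-field conditions $\varrho\to1$, $U\to0$ is handled as in \cite{KNN}, by working with $\varrho-1\in L^\gamma$ and the relative pressure potential $H(\rho)$ from \eqref{P-potential}. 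I would first construct a weak solution $(\varrho,U)$ of \eqref{mass:fluid}--\eqref{b.c-1} in the inertial frame, with the body described as a region of high viscosity or as the set where $\mathbb D(U)=0$.

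The construction follows the standard hierarchy. First, penalize the rigidity constraint by a large viscosity (or a term $n\int|\mathbb D U|^2$) on the transported body region. Then add artificial viscosity $\delta_1\Delta\varrho$ and artificial pressure $\delta_2\varrho^\beta$ with $\beta$ large, and use a Faedo--Galerkin approximation for the velocity on large balls $B_R$. The energy inequality \eqref{energy} gives uniform bounds, and the limits are taken in the order Galerkin, $\delta_1\to0$, then $R\to\infty$, then the penalization, then $\delta_2\to0$. For the last step I would use Lions--Feireisl effective viscous flux estimates and oscillation defect measures, which require $\gamma>3/2$. Renormalized solutions for the continuity equation follow by DiPerna--Lions, since $u\in L^2H^1$. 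The rigid-body structure of the limit velocity on $\mathcal S(t)$ comes from $\mathbb D(U)=0$ there, via Temam's lemma \cite[Lemma 1.1]{Temam-plasticity}. Compactness of the body motion follows from the uniform bounds on $h',\Omega$ and the transport of the characteristic function by the rigid velocity field.

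Once the inertial-frame solution exists, I would define $u,\rho$ by the change of variables with $\mathbb Q(t)$ and $h(t)$. Since $\mathbb Q$ is only Lipschitz in time, this is a legitimate diffeomorphism. A direct computation with test functions $\phi(t,y)=\mathbb Q^\top\Phi(t,\mathbb Q y+h)$ transforms the weak formulations into \eqref{weakform-continuity}--\eqref{weakform-momentum}. In particular, the terms $\rho\omega\times u$ and $\det(m\ell,\omega,\ell_\phi)$ arise from $\mathbb Q'=\Omega\times\mathbb Q$. Test functions in $C^\infty([0,T];\mathcal H\cap C_c^\infty)$ correspond exactly to inertial test functions that are rigid near the body, and the energy inequality is invariant under the transformation.

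I expect the main obstacle to be the limit in the penalization and $\delta_2\to0$ steps near the moving interface. There one must show that the weak limit of $\varrho^\gamma$ equals $\overline{\varrho}^\gamma$ up to the boundary of a domain that itself depends on the limiting velocity, while the pressure is controlled only locally in the fluid region. Here I would rely directly on the local pressure estimates and the "distance from the body" argument of \cite{Feireisl-compressible-Rigid}. Those arguments are valid as long as the body stays away from infinity, which is automatic in $\mathbb R^3$ with a single body. No collisions can occur, so the argument of \cite{Feireisl-compressible-Rigid} applies verbatim on any $(0,T)$.
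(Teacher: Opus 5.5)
Your overall route is the paper's own: the paper gives no argument beyond citing \cite[Theorem 4.1]{Feireisl-compressible-Rigid} and \cite{KNN}. Your approximation scheme followed by the body-frame change of variables is a reasonable expansion of that citation. There is, however, a genuine gap at the far-field step, and the paper's citation glosses over it as well. You propose to work with $\rho-1\in L^\gamma$ and far-field density $1$, but \eqref{init} requires $\rho_0\in L^\gamma(\mathbb R^3)$, and the two are incompatible. Testing \eqref{weakform-continuity} with a time-independent $\psi\in C_c^\infty(\mathbb R^3)$ gives $\rho(t)\to\rho_0$ in $\mathcal D'(\mathbb R^3)$ as $t\to0$. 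A bound on $\rho-1$ in $L^\infty(0,T;L^\gamma(\mathbb R^3))$ then forces $\rho_0-1\in L^\gamma(\mathbb R^3)$, hence $1\in L^\gamma(\mathbb R^3)$. So, with the data as literally stated, no weak solution in the sense of Definition~\ref{23:54} exists, and your construction cannot even start. Indeed, the integrand $H(\rho_0)-\frac{\partial H}{\partial\rho}(1)(\rho_0-1)-H(1)$ equals $a>0$ where $\rho_0=0$. It is bounded below by a positive constant on $\{\rho_0<1/2\}$, which has infinite measure, so the initial energy relative to $1$ is infinite. Moreover, \eqref{P-potential} is not a relative potential: $H(\rho)=\frac{a}{\gamma-1}(\rho^\gamma-\rho)=a(\rho-1)+O(|\rho-1|^2)$ near $\rho=1$. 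Hence $\int_{\mc F_0}H(\rho)\,dy$ in \eqref{energy} is undefined when only $\rho-1\in L^\gamma$ is known. What your argument can actually deliver is existence for data of finite energy relative to the state $(1,0)$. The energy inequality \eqref{energy} must then be written for $H(\rho)-\frac{\partial H}{\partial\rho}(1)(\rho-1)-H(1)$, and, for $\gamma<2$, $\rho-1$ is only controlled in $L^2+L^\gamma$ through the essential/residual splitting. You need to make this repair of the hypotheses explicit instead of claiming Definition~\ref{23:54} as written.

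Three further points need correcting. First, renormalization of the limit density does not follow from DiPerna--Lions with $u\in L^2H^1$ alone. The commutator lemma needs $\rho\in L^2_{\mathrm{loc}}$, whereas the local pressure estimates give only $\rho\in L^{5\gamma/3-1}_{\mathrm{loc}}$, which falls below $L^2$ for $\frac32<\gamma<\frac95$. In that range the renormalized equation comes from Feireisl's bound on the oscillation defect measure $\operatorname{osc}_{\gamma+1}$, the same ingredient that gives strong convergence of the density in the $\delta_2\to0$ step. Second, neither reference applies verbatim. \cite[Theorem 4.1]{Feireisl-compressible-Rigid} is formulated in a bounded container, and its novelty is handling contacts, which you do not need. \cite{KNN} concerns a body with prescribed rotation and no Newton law. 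So the exhaustion by $B_R$ is a step you must actually carry out: an $R$-independent bound on $|h'|$ keeps the body away from $\partial B_R$, and local compactness passes $R\to\infty$. Third, $\mathbb Q$ and $h$ are only Lipschitz in time. Body-frame test functions in $C^\infty([0,T];\mc H\cap C_c^\infty(\mathbb R^3))$ therefore correspond to inertial test functions that are merely Lipschitz in time, so the correspondence is not exact. A time mollification closes this gap: because the body moves with bounded speed, it preserves rigidity on a (slightly smaller) neighbourhood of $\mc S(t)$.
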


 %{\color{blue}  Let us also mention the existence theory for the exterior problem, see \cite{KNN}.where only rotation of the body was considered.  \begin{theorem}\label{thm:existence}  Let $\gamma>\frac{3}{2}$. 
    % Let $\rho_\infty > 0$, and $\bomega,\,\vc a_\infty\in\R^3$ with $\bomega \times \vc a_\infty = 0$. Assume further that $(\rho_0, \bu_0)\in L^1_{\mathrm{loc}}(\Omega)\times L^1_{\mathrm{loc}}(\Omega;\R^3)$ with
    % \begin{align}
    %     \rho_0 \geq 0, \quad P(\rho_0 | \rho_\infty) \in L^1(\Omega), \quad \rho_0 |\bu_0 - \vc a_\infty|^2 \in L^1(\Omega)
    % \end{align}
    % and $\vc f \in L^\infty(0,T; L^1(\Omega) \cap L^\infty(\Omega) )$. Then there exists a finite-energy weak solution to system \eqref{chg of var mass:fluid}-- \eqref{P-form-new} in the sense of Definition~\ref{23:54}.\end{theorem}}

%%%%%%%%%%%%%%%%%%%%%%%%%%%

\subsection{Case of the Euler equation}    Let us consider ``incompressible Euler equation + rigid body" system  \eqref{momentum:fluidE}--\eqref{initialcondE}. We consider the following change of variables to rewrite the system in a fixed domain:
\begin{equation*} u^{E}(t,y) =(\Q(t)^{E})^{\top}U^{E}(t, \Q^{E}(t)y+h^{E}(t)), \quad  p^{E}(t,y)=P^{E}(t, \Q^{E}(t)y+h^{E}(t)),
\end{equation*}
\begin{equation*}
\ell^{E}(t) = (\Q^{E}(t))^{\top}(h^{E})'(t),\quad \omega^{E}(t)=(\Q^{E}(t))^{\top}{\Omega^{E}}(t).
\end{equation*}
Under this change of variables, equations \eqref{momentum:fluidE}--\eqref{initialcondE} have been transformed in the fixed domain as 
%\operatorname{div}\left[u^{E}\otimes (u^{E}-u^{E}_{\mc{S}})\right]
\begin{align}  
\frac{\partial }{\partial t}u^{E} + (u^E - u^E_{\mc{S}}) \cdot \nabla u^E + \omega^{E} \times u^{E} + \nabla p^{E} &= 0 \ \ \qquad \mbox{ in } (0,T)\times \mc{F}_0 , \label{chg of var momentum:fluidE} \\
\operatorname{div}\,u^{E} &=0 \ \ \qquad \mbox{ in } (0,T)\times \mc{F}_0 , \label{chg of var mass:fluidE} \\
 u^{E}\cdot n &= u^{E}_{\mc{S}}\cdot n  \  \ \mbox{ on }  \ (0,T)\times \partial \mc{S}_{0} ,  \label{chg of var boundaryE} \end{align} 
\begin{align} 
m(\ell^{E})'&= \int\limits_{\partial \mc{S}_{0}} p^{E} n\, d\Gamma + m\ell^{E} \times \omega^{E}, \label{chg of var linear momentum:bodyE}\\ 
J_{0}(\omega^{E})' &= \int\limits_{\partial \mc{S}_{0}} y \times p^{E} n\, d\Gamma  + J_{0}\omega^{E} \times \omega^{E}, \label{chg of var angular momentum:bodyE} \end{align} 
\begin{equation}\label{chg of var initial condE}  u^{E}(0,y)=u_0^{E} ,\  h^{E}(0)=0,\ \ell^{E}(0)=\ell^{E}_{0},\ \omega^{E}(0)=\omega^{E}_{0}, \end{equation}
with 
\begin{equation*}
u^{E}_{\mc{S}}(t,y)= \ell^{E}(t) + \omega^{E}(t)\times y \ \text{ for } \ y \in \mc S_0.
\end{equation*}

%{\color{magenta} We need to introduce some functional spaces to state the existence and uniqueness of classical solutions to system \eqref{chg of var momentum:fluidE}--\eqref{chg of var initial condE}:

 Let us  define the space 
\begin{align}\label{space-H}
\mc{H}^E := \left\{ \phi \in L^2(\mathbb R^3) \mid \operatorname{div} \phi = 0 \text{ in } \mathbb R^3, \ \ \mathbb D(\phi)=0 \text{ in } \mc S_0  \right\} ,
\end{align} 
Furthermore, for $\phi, \psi \in \mc{H}^E$, the inner product is defined by 
\begin{equation}\label{inner product:Euler} 
{\left(\phi, \psi\right)_{\mc{H}^E} = \int\limits_{\mc{F}_{0}}  \phi \cdot \psi\,dy + m\ell_{\phi}\cdot \ell_{\psi} + J_{0} \omega_{\phi}\cdot \omega_{\psi}. } 
\end{equation}
 The norm associated with the above inner product is denoted by $\|\cdot\|_{\mc{H}^E}=\left(\cdot, \cdot\right)^{1/2}_{\mc{H}^E}$.

We can follow \cite[Theorem 2.4]{Wang-Zang} and \cite[Theorem 5]{MR2993933} to obtain 
the following existence--uniqueness result of the solution to the ``incompressible Euler + rigid body" system.  
\begin{theorem}\label{Thm-Euler-body}
Let $s>5/2$, $u^E_0\in H^s(\mc F_0)$, $\mathrm{curl}\,u^E_0\mid_{\mc F_0}$ is compactly supported and $u^E_0\cdot n = (\ell^E_0 + \omega^E_0 \times y)\cdot n$ on $\partial {\mc S}_0$. Then, there exists some $T>0$ and a unique solution $(u^E, p^E, \ell^E, \omega^E)$ to \eqref{chg of var momentum:fluidE}--\eqref{chg of var initial condE}   such that
\begin{equation}
\label{regularity-Euler}
\begin{aligned}
u^E \in C([0,T]; H^s(\mc F_0)), \ \ &\nabla p^E \in C([0,T]; H^{s-1}(\mc F_0)) ,\ \nabla u^E \in C([0,T];L^2(\mc F_0, (1+|y|^2)^{\frac12} dy)), \\
& \qquad \qquad \qquad  \ell^E , \, \omega^E \in C^1([0,T]; \mathbb R^3) . 
\end{aligned}
\end{equation}
\end{theorem}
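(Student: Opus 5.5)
Theorem~\ref{Thm-Euler-body} is a local well-posedness statement in the body frame, so the plan is to recast it as a quasilinear hyperbolic problem for the vorticity coupled to finitely many ODEs, following \cite{Wang-Zang,MR2993933}.

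\textbf{Reduction.} Write $\xi=\mathrm{curl}\,u^E$. Since $u^E$ is divergence free in $\mc F_0$ and tends to zero at infinity, it is recovered from $(\xi,\ell^E,\omega^E)$ and the circulation around $\mc S_0$ by a div--curl problem in the exterior domain. Because $\mc S_0$ is simply connected, $\mc F_0$ is simply connected in three dimensions and no circulation enters. Concretely,
\[
u^E=K[\xi]+\sum_{i=1}^3 \ell^E_i\nabla\Phi_i+\sum_{i=1}^3\omega^E_i\nabla\Phi_{i+3},
\]
where $K[\xi]$ is the Biot--Savart field with zero normal trace and the $\Phi_j$ are the Kirchhoff potentials: harmonic in $\mc F_0$, with $\partial_n\Phi_i=n_i$ and $\partial_n\Phi_{i+3}=(y\times n)_i$ on $\partial\mc S_0$. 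Elliptic theory in exterior domains gives
\[
\|K[\xi]\|_{H^s}\lesssim\|\xi\|_{H^{s-1}},
\]
together with decay of $\nabla\Phi_j$ like $|y|^{-3}$. The body-frame vorticity satisfies the transport--stretching equation
\[
\partial_t\xi+(u^E-u^E_{\mc S})\cdot\nabla\xi-\xi\cdot\nabla u^E+\omega^E\times\xi=0 .
\]
The transport field $u^E-u^E_{\mc S}$ is tangent to $\partial\mc S_0$, so no boundary condition is needed for $\xi$ and compact support is propagated (with finite speed on $[0,T]$).

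\textbf{Body equations.} The pressure is eliminated via the added-mass structure. Taking the divergence of \eqref{chg of var momentum:fluidE} gives a Neumann problem for $p^E$, whose normal data involve $(\ell^E)'$ and $(\omega^E)'$. Testing with $\nabla\Phi_j$ turns \eqref{chg of var linear momentum:bodyE}--\eqref{chg of var angular momentum:bodyE} into
\[
(\mathcal M+\mathcal M_a)\begin{pmatrix}\ell^E\\ \omega^E\end{pmatrix}'=F(\xi,\ell^E,\omega^E),
\]
where $\mathcal M$ is the block-diagonal matrix with blocks $m\mathbb I$ and $J_0$, and $\mathcal M_a=(\int_{\mc F_0}\nabla\Phi_i\cdot\nabla\Phi_j)_{i,j}$ is positive semidefinite. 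The sum $\mathcal M+\mathcal M_a$ is therefore invertible. The right-hand side $F$ is quadratic and Lipschitz in $H^{s-1}\times\R^6$.

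\textbf{Estimates.} We then prove $H^{s-1}$ energy estimates for $\xi$ via commutator (Kato--Ponce) bounds, using $s-1>3/2$ so that $\nabla u^E\in L^\infty$. Combined with the ODE, these yield a closed a priori estimate on a short interval $[0,T]$. Existence follows from a standard iteration or Schauder scheme, for instance freezing the velocity in the transport equation and the ODE. Uniqueness and the continuity in time follow from an $L^2$ difference estimate together with a Bona--Smith argument.

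\textbf{Remaining regularity.} The weighted bound $\nabla u^E\in L^2((1+|y|^2)^{1/2}dy)$ comes from the compact support of $\xi$ and the decay $|\nabla^2\Phi_j|\lesssim|y|^{-4}$, with $K[\xi]$ decaying like a dipole. The bound $\nabla p^E\in H^{s-1}$ follows by reading $\nabla p^E$ off the momentum equation.

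The main obstacle is the coupling of the pressure to the body accelerations, i.e.\ showing that the added-mass formulation is well posed and that $F$ has the right regularity. We would handle it exactly as in \cite[Theorem 5]{MR2993933}, and otherwise cite \cite[Theorem 2.4]{Wang-Zang}.
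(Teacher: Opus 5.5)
Your proposal is correct and follows essentially the same route as the paper, which gives no independent proof and simply invokes \cite[Theorem 2.4]{Wang-Zang} for the $H^s$ well-posedness and \cite[Theorem 5]{MR2993933} for the weighted gradient bound; your vorticity, Kirchhoff-potential and added-mass outline is the standard mechanism behind those results. Like the paper, you tacitly use $\operatorname{div}u_0^E=0$, which is not listed among the hypotheses, and the fractional-order commutator estimates for $\xi$ up to $\partial\mathcal S_0$ would need localization rather than a direct Kato--Ponce argument; since you fall back on the same citations, this is not a gap relative to the paper.
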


%\begin{remark}\label{Remark-Euler-Body} From  \eqref{regularity-Euler}, one may deduce that 
%\begin{align*}
%p^E\in C([0,T]; H^{s}(\mc F_0)), \  \,   u^E_t \in C([0,T]; H^{s-1}(\mc F_0)), \ \, \ p^E_t \in C([0,T]; H^{s-1}(\mc F_0)) .
%\end{align*} \begin{align*}u^E , \, \nabla u^E, \, \nabla p^E \in L^\infty((0,T)\times \mc F_0), \, \text{ provided } \, s>4 . \end{align*}Moreover, for any $t\in [0,T]$, one has  \begin{equation*}\frac{1}{2}\|u^{E}(t)\|^2_{\mc{H}^E}=\frac{1}{2}\|u_0^{E}\|^2_{\mc{H}^E}. \end{equation*}\end{remark}

%{\color{red}The existence-uniqueness result is already available in \cite[Theorem 5]{MR2993933}:} 
%\begin{theorem}\label{euler-body}
% Let $\lambda \in (0,1)$ be given and $u_0^{E}\in \mc{H}^E$ such that $u_0^{E}|_{\mc{F}_0}\in H^1 \cap C^{1,\lambda}$ and $\operatorname{curl}u_0^{E}|_{\mc{F}_0}$ is compactly supported. Then there exists $T>0$ and a unique solution $u^{E}$ of \eqref{chg of var momentum:fluidE}--\eqref{chg of var initial condE} in $C^1([0,T];\mc{H}^E)$ such that $$\nabla u^{E}|_{(0,T)\times \mc{F}_0} \in C([0,T];L^2(\mc{F}_0,(1+|x|^2)\ dx)) \cap C_{\textnormal{weak}^{*}}([0,T];C^{0,\lambda}(\mc{F}_0)).$$
%Moreover, for any $t\in [0,T]$,
% \begin{equation*}
% \frac{1}{2}\|u^{E}(t)\|^2_{\mc{H}^E}=\frac{1}{2}\|u_0^{E}\|^2_{\mc{H}^E}.
 %\end{equation*}
%\end{theorem}

\subsection{Main result}

We can now state the  main result of the paper. Let us introduce the strips
\begin{equation}\label{Gamma-strip}
\Gamma_{\nu} := \left\{y\in \mc{F}_0\mid \d(y)< \nu\right\}\mbox{ with }\d(y)= \operatorname{dist}(y,\partial\mc{S}_0).
\end{equation}
\begin{theorem}\label{main-thm}
Let $\mc{S}_0 \subset \mathbb{R}^3$ be a smooth, bounded, closed, simply connected set and $\mc{F}_0= \mathbb{R}^3\setminus \mc{S}_0$.
Let $c>0$ be given and assume that  $(u^{E},p^{E},\ell^{E},\omega^{E})$ is the strong solution to system \eqref{chg of var momentum:fluidE}--\eqref{chg of var initial condE} corresponding to an
initial data $(u^{E}_0,\ell_0^{E},\omega_0^{E})$ as in \cref{Thm-Euler-body}. Let $(\rho_0,q_0)$ be an initial data satisfying 
\eqref{init}--\eqref{init2}, the pressure $p(\rho)$ and pressure potential $H(\rho)$ are given by \eqref{P-form-new} and \eqref{P-potential} respectively and $(\rho,u)$ be an associated weak solution to system \eqref{chg of var mass:fluid}--\eqref{chg of variable boundary:no-slip}  as given by \cref{NS-body}.
We further assume that
\begin{equation}\label{Thm-Assum-1}
\frac{1}{2}\int\limits_{\mathbb{R}^3}\rho_0\left|\frac{q_0}{\rho_0}-u_0^{E}\right|^2 dy  + \frac{1}{\varepsilon^2}\int\limits_{\mc{F}_0}\left(H(\rho_0)-\frac{\partial H}{\partial\rho}(1)(\rho_0-1)-H(1)\right) dy \rightarrow 0 \ \mbox{ as }\varepsilon\rightarrow 0,\ \nu\rightarrow 0,
\end{equation}
and 
\begin{equation}\label{Thm-assum-2}
\nu \left\| \frac{\rho(u-u_{\mc S})}{d^2}\right\|^2_{L^2(0,T;L^{\frac{6}{5}}(\Gamma_{c
\nu}))}     +   \nu \int\limits_0^T \int\limits_{\Gamma_{c\nu}}|\sigma(u)|^2 \, dy\,ds\rightarrow 0 \ \mbox{ as } \nu\rightarrow 0.
\end{equation}
Then
\begin{equation}\label{main-result-thm-1}
\frac{1}{2}\int\limits_{\mathbb{R}^3}\rho\left|u-u^{E}\right|^2 dy + \frac{1}{\varepsilon^2}\int\limits_{\mc{F}_0}\left(H(\rho)-\frac{\partial H}{\partial\rho}(1)(\rho-1)-H(1)\right)  dy \rightarrow 0 \  \mbox{ as }\varepsilon\rightarrow 0,\ \nu\rightarrow 0.
\end{equation}
\end{theorem}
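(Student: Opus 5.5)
We follow the relative entropy method combined with a Kato-type boundary layer corrector adapted to the body frame. The first step is a relative energy inequality for weak solutions in the sense of Definition \ref{23:54}. For a smooth test pair $(r,U)$ with $U\in C^1([0,T];\mc H\cap C^1_c)$ equal to a rigid field $\ell_U+\omega_U\times y$ on $\mc S_0$ and satisfying $U=U_{\mc S}$ on $\partial\mc S_0$, we set
\[
\mathcal E(\rho,u\mid r,U)=\frac12\int_{\R^3}\rho|u-U|^2\,dy+\frac1{\veps^2}\int_{\F_0}\bigl(H(\rho)-H'(r)(\rho-r)-H(r)\bigr)dy .
\]
We take $r\equiv1$, so the pressure term of the test pair is trivial. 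We then combine the energy inequality \eqref{energy}, the momentum balance \eqref{weakform-momentum} tested with $U$, and the continuity equation \eqref{weakform-continuity} tested with $\frac12|U|^2$. This produces an inequality of the form
\[
\mathcal E(t)+\nu\int_0^t\!\!\int_{\F_0}\sigma(u):\mathbb D(u)\le \mathcal E(0)+\int_0^t\mathcal R(s)\,ds .
\]
Here the remainder $\mathcal R$ collects three groups of terms: the transport terms $\rho(u-u_{\mc S})\otimes(U-u):\nabla U$ together with $\rho\,\partial_tU$, the Coriolis terms $\det(\rho\omega,u,U)$ with their body analogues, and the terms $-\nu\sigma(u):\mathbb D(U)$ and $\frac1{\veps^2}p(\rho)\operatorname{div}U$. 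The solid parts of these terms are written through the convention \eqref{convention}.

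The second step is the corrector. The field $u^E$ is not admissible as $U$, because it satisfies only \eqref{chg of var boundaryE}. Following Kato and \cite{MR2993933}, we set $U=u^E-v$, where
\[
v=\nabla^\perp\text{-type correction of }\ \zeta\!\left(\tfrac{d(y)}{c\nu}\right)(u^E-u^E_{\mc S})_{\tan}.
\]
Concretely, $v=\operatorname{curl}\bigl(\zeta(d/c\nu)\,\psi\bigr)$, where $\psi$ is a vector potential of $u^E-u^E_{\mc S}$ near $\partial\mc S_0$ and $\zeta$ is a cutoff equal to $1$ near $0$. With this choice $v$ is divergence free, is supported in $\Gamma_{c\nu}$, and gives $U=u^E_{\mc S}$ on $\partial\mc S_0$. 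It also satisfies
\[
\|v\|_{L^\infty}\lesssim1,\quad \|v\|_{L^2}\lesssim\nu^{1/2},\quad \|\nabla v\|_{L^2}\lesssim\nu^{-1/2},\quad \|\partial_tv\|_{L^2}\lesssim\nu^{1/2},\quad \|d\,\nabla v\|_{L^\infty}\lesssim1,
\]
using Theorem \ref{Thm-Euler-body}. The difference between $\omega_U$ and $\omega$ in the Coriolis and body terms is treated by evaluating against the Euler equations \eqref{chg of var momentum:fluidE}--\eqref{chg of var angular momentum:bodyE}. Testing the Euler system with $(u-U)$ in the $\mc H$-pairing, and using \eqref{chg of var linear momentum:bodyE}--\eqref{chg of var angular momentum:bodyE} for the body components, makes the pressure $p^E$ integrate against $\operatorname{div}(\rho(u-u_{\mc S}))$. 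Via the continuity equation this reduces to $\partial_t\rho$ paired with $p^E$, which is $O(\veps)$ by the $L^2+L^\gamma$ bound on $(\rho-1)/\veps$ coming from the relative energy.

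The third step is bounding the remainder by $C\int_0^t\mathcal E+o(1)$. The terms involving only $u^E$ give $\|\nabla u^E\|_{L^\infty}\mathcal E$, plus $O(\veps)$ from acoustic terms such as $\rho-1$ multiplied by smooth fields and $\veps^{-2}p(\rho)\operatorname{div}U$, since $\operatorname{div}U=0$ and $p(\rho)-p(1)-p'(1)(\rho-1)\lesssim H(\rho\mid1)$. Viscous terms on $u^E$ give $\nu\|\nabla u\|_{L^2}\|\nabla u^E\|_{L^2}\le\frac\nu2\|\nabla u\|^2+C\nu$, absorbed using Korn's inequality for the body-fixed no-slip field. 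The corrector terms are estimated on $\Gamma_{c\nu}$ as follows.
\begin{itemize}
\item[(i)] $\nu\int\sigma(u):\nabla v\le\bigl(\nu\int_{\Gamma_{c\nu}}|\sigma(u)|^2\bigr)^{1/2}\,\nu^{1/2}\|\nabla v\|_{L^2}\to0$ by \eqref{Thm-assum-2}.
\item[(ii)] The critical transport term is $\int\rho(u-u_{\mc S})\otimes(u-U):\nabla v$. We write $u-U=(u-u_{\mc S})-(U-u_{\mc S})$. The first part is bounded using $|\nabla v|\lesssim d^{-1}$ and Hardy's inequality with $\|(u-u_{\mc S})/d\|_{L^6}\lesssim\|\nabla u\|$, giving
\[
\Bigl\|\tfrac{\rho(u-u_{\mc S})}{d^2}\Bigr\|_{L^{6/5}(\Gamma_{c\nu})}\,\|d\,(u-u_{\mc S})/d\|_{L^6}\le\Bigl(\nu\Bigl\|\tfrac{\rho(u-u_{\mc S})}{d^2}\Bigr\|^2\Bigr)^{1/2}\bigl(\nu^{-1}\|d\|_{L^\infty(\Gamma_{c\nu})}^2\|\nabla u\|^2\bigr)^{1/2}.
\]
The factor $\nu\|\nabla u\|^2_{L^2}$ is bounded by the energy, so this term is $o(1)$ after time integration. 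The second part uses $|U-u_{\mc S}|\lesssim d+|\omega-\omega^E|+|\ell-\ell^E|$ inside $\Gamma_{c\nu}$; it produces a term of the same type plus $C\,\mathcal E$, where the rigid velocity differences are controlled by the body part of $\mathcal E$.
\item[(iii)] The term $\rho\,\partial_tv$ together with the remaining $v$-terms is $O(\nu^{1/2})$, after splitting $\rho=1+(\rho-1)$ and using $\|v\|_{L^\infty}\lesssim1$ together with the small measure of $\Gamma_{c\nu}$.
\end{itemize}

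Finally, since $\mathcal E(\rho,u\mid1,U)-\mathcal E(\rho,u\mid1,u^E)$ is bounded by $\int\rho|v|^2+2\int\rho|u-u^E||v|\lesssim\nu^{1/2}(1+\mathcal E^{1/2})$, and since $\mathcal E(0)\to0$ by \eqref{Thm-Assum-1}, Gronwall's lemma yields \eqref{main-result-thm-1} uniformly on $[0,T]$. The main obstacle is estimate (ii): the term coupling the fluid relative velocity with the corrector gradient. It forces the weighted hypothesis in \eqref{Thm-assum-2} and requires careful handling of the rigid-velocity mismatch $u_{\mc S}-u^E_{\mc S}$ inside the layer. A secondary difficulty is the pressure/body coupling, where one must exploit that the $p^E$ contributions in the fluid and body equations cancel up to $O(\veps)$.
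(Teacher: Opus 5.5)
Your overall route is the paper's. It uses the relative energy with $r\equiv1$ and $U=u^E-v$, where $v$ is a Kato corrector built from a potential vanishing on $\partial\mc S_0$. The pressure is handled by testing the continuity equation with $p^E$ and cancelling the boundary pressure terms against the body equations. The corrector transport term is controlled by the weighted part of \eqref{Thm-assum-2}, and the argument closes with Gr\"onwall. Your direct use of the continuity equation on $\int\rho(u-u_{\mc S})\cdot\nabla p^E$ is quicker than the paper's weak-limit argument.

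The gap is in your third step: the claim that the terms involving $u^E$ are bounded by $\|\nabla u^E\|_{L^\infty}\mathcal E$ is false. After you substitute the Euler equation, the transport remainder leaves
\[
\int_{\mc F_0}\rho\,\bigl((u_{\mc S}-u^E_{\mc S})\cdot\nabla\bigr)u^E\cdot(U-u)\,dy,
\qquad
u_{\mc S}-u^E_{\mc S}=(\ell-\ell^E)+(\omega-\omega^E)\times y .
\]
The rigid mismatch grows linearly in $|y|$ on the unbounded fluid domain, so an $L^\infty$ bound on $\nabla u^E$ gives no bound at all on this integral. You must apply Cauchy--Schwarz against $\bigl(\int_{\mc F_0}\rho(1+|y|^2)|\nabla u^E|^2\,dy\bigr)^{1/2}$. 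This uses the weighted integrability of $\nabla u^E$ from Theorem~\ref{Thm-Euler-body}, plus a separate estimate for the part carrying $\rho-1$; the paper does this in \eqref{R-3-term-u-s}--\eqref{R-3-u-1}. The same growth affects the term $(\rho-1)\,u_{\mc S}\cdot\nabla p^E$ produced by your pressure splitting, so it is not just ``$\rho-1$ times a smooth field''; the paper treats it with the truncated field $u_{\mc S,\veps}$. The mismatch $u_{\mc S}-u^E_{\mc S}$ you flag as delicate \emph{inside} the layer is harmless there, since $\Gamma_{c\nu}$ is bounded; the difficulty is at spatial infinity.

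Your estimate (ii), which you call the main obstacle, reaches the right conclusion through two incorrect inequalities.

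First, $\|(u-u_{\mc S})/d\|_{L^6}\lesssim\|\nabla u\|_{L^2}$ is false. For $f(\lambda\,\cdot)$ with $f$ vanishing on a flat piece of boundary, the left side scales like $\lambda^{1/2}$ and the right like $\lambda^{-1/2}$; Hardy's inequality with weight $d^{-1}$ gains no integrability. You do not need it. Since $d^2|\nabla v|\lesssim d\le c\nu$ on $\Gamma_{c\nu}$, the term is at most $C\nu A_\nu(s)\|u-u_{\mc S}\|_{L^6(\Gamma_{c\nu})}$, where $A_\nu(s)=\|\rho(u-u_{\mc S})/d^2\|_{L^{6/5}(\Gamma_{c\nu})}$. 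Sobolev for $u\in H^1(\R^3)$ and the energy bound give $\|u-u_{\mc S}\|_{L^2(0,T;L^6(\Gamma_{c\nu}))}\lesssim\nu^{-1/2}$. The contribution is therefore $O(B_\nu^{1/2})$ with $B_\nu=\nu\int_0^TA_\nu^2\,ds\to0$, exactly the paper's bound on $\mc R_{9,3}$.

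Second, $|U-u_{\mc S}|\lesssim d+|\ell-\ell^E|+|\omega-\omega^E|$ fails on $\Gamma_{c\nu}\setminus\Gamma_{c\nu/2}$. There $U-u^E_{\mc S}=(1-\zeta)(u^E-u^E_{\mc S})-\nabla\zeta\times\psi$ is of order one, because the tangential Euler slip does not vanish at the wall. This bound is also unnecessary. Bound $|U-u_{\mc S}|$ by a constant on the bounded layer (using the energy bound on $\ell,\omega$) and use $|\Gamma_{c\nu}|^{1/6}\lesssim\nu^{1/6}$. This gives $O(\nu^{2/3}B_\nu^{1/2})$, which is how the paper treats $\mc R_{9,1}$ and $\mc R_{9,2}$.
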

Theorem~\ref{main-thm} can be compared with two results of Sueur.
For a rigid body moving in an incompressible fluid,
\cite[Theorem~6]{MR2993933} establishes that convergence in the total
kinetic-energy space is equivalent to the vanishing of viscous
dissipation in a boundary strip of thickness proportional to the
viscosity:
\[
    \nu \int_0^T \int_{\Gamma_{c\nu}} |D(u)|^2 \, dx \, dt
    \longrightarrow 0
    \qquad \text{as } \nu \to 0,
    \qquad
    D(u):=\frac12\bigl(\nabla u+\nabla u^\top\bigr).
\]
Our theorem retains this boundary-layer scale while addressing
the simultaneous low Mach number and vanishing viscosity limit.
However, assumption~\eqref{Thm-assum-2} also imposes a weighted
condition on the relative momentum $\rho(u-u_{\mathcal S})$. The convergence concerns the coupled
motion: the total relative kinetic energy controls both the fluid
velocity and the translational and angular velocities of the body.

In the compressible setting,
\cite[Theorem~3]{Sueur-compressible-inviscid} gives a sufficient
boundary-layer condition for convergence from the compressible
Navier--Stokes equations to the compressible Euler equations
in a fixed bounded domain $\Omega$. Writing $\nu$ for the viscosity
parameter and expressing the viscous contribution in our notation,
the condition takes the form
\[
    \nu \int_0^T \int_{\widetilde{\Gamma}_{c\nu}}
    \left(
        \frac{\rho |u|^2}{d_\Omega^2}
        + \frac{\rho^2 |u\cdot n|^2}{d_\Omega^2}
        + |\sigma(u)|^2
    \right) \, dx \, dt
    \longrightarrow 0
    \qquad \text{as } \nu \to 0,
\]
where
\[
    d_\Omega(x):=\operatorname{dist}(x,\partial\Omega),
    \qquad
    \widetilde{\Gamma}_{c\nu}
    :=\{x\in\Omega : d_\Omega(x)<c\nu\},
\]
and $n$ denotes the outward unit normal extended into the strip
along normal rays. This condition combines viscous dissipation
with density-weighted velocity terms near the boundary.
The present theorem considers a different limiting regime and
geometry: the Mach number and viscosity vanish simultaneously,
and the fluid interacts with a freely moving rigid body in an
exterior domain. The initial preparation
condition~\eqref{Thm-Assum-1}, which includes the relative internal
energy scaled by $\varepsilon^{-2}$, is adapted to the limiting
constant fluid density. Together with
\eqref{Thm-assum-2}, it yields convergence to the incompressible
Euler--rigid body system.
\begin{remark}
Assumption~\eqref{Thm-assum-2} consists of two requirements in the
strip $\Gamma_{c\nu}$. The second controls the viscous contribution
near the fluid--solid interface, while the first imposes a weighted
bound on the fluid momentum relative to the body.
\end{remark} 
\section{Proof of the main result}\label{sec4}

Now we want to prove Theorem \ref{main-thm}. We shall establish this result via some intermediate steps. First we  prove a relative energy inequality based on the works by Feireisl and coauthors: \cite{FeJiNo,FeNoSun} for the bounded domains  and \cite{Eduard-Sarka-Sun} for the expanding domains. Here, it is worth mentioning that the relative energy inequality has been established by  Kreml, Ne\v{c}asov\'{a}, Piasecki \cite{KrNePi20} for the case of motion of a  rigid body in the compressible fluids, and  by  Eiter, Ne\v{c}asov\'{a}, Oschmann \cite{ENO}  for the motion of a rotating body in the compressible fluids.

%{\color{blue}  The case with inflow-outflow the weak-strong uniqueness was derived, see \cite{NR}.  \footnote{The incompressible variant of weak-strong uniqueness can be found in the work of Chemetov et al. \cite{CNM}}.}\bigskip 

%This inequality gives  a non-symmetric measure of the difference between a weak solution
%of the c``compressible Navier–Stokes + rigid-body" system and a smooth test function satisfying the no-slip condition. As
%the smooth solutions of the Euler equations do not satisfy the tangential part of the no-slip condition,
%one cannot apply directly these relative energy inequalities. In order to overcome this difficulty we will
%follow a strategy used by Kato in the incompressible setting, see [23], by constructing a “fake” layer
%which, added to the Euler solution, provided a smooth test function which satisfies the no-slip condition
%and yet quite close to the Euler solution.

\subsection{Relative Energy Inequality}\label{Sec-Relative-Noslip}
We will deduce the relative energy inequality in the spirit of \cite{KrNePi20} but we have to establish the inequality for the transformed system \eqref{chg of var mass:fluid}--\eqref{chg of variable boundary:no-slip}, whereas the authors deal with the original system \eqref{mass:fluid}--\eqref{b.c-1}. For a weak solution $(\rho,u)$ to the system \eqref{chg of var mass:fluid}--\eqref{chg of variable boundary:no-slip} and a pair of test functions $(r, U)$, we introduce the following notation for the relative energy:
 \begin{equation}\label{Expression-Relative-energy}
 \mc{E}([\rho,u]\mid [r,U])(\tau)= \frac{1}{2}\int\limits_{\mathbb{R}^3}\rho\left|u-U\right|^2(\tau)\, dy + \frac{1}{\varepsilon^2}\int\limits_{\mc{F}_0}\left(H(\rho)-\frac{\partial H}{\partial\rho}(r)(\rho-r)-H(r)\right)(\tau)\, dy
 \end{equation}
 for a.a $\tau\in (0,T)$, where $H(\rho)$ is defined in \eqref{P-potential}.

 \begin{theorem}\label{rel-energy}
Let $\mc{S}_0 \subset \mathbb{R}^3$ be a $C^{1,1}$ bounded,  closed, simply connected set and $\mc{F}_0= \mathbb{R}^3\setminus \mc{S}_0$. Suppose the pressure is given by \eqref{P-form-new}. Let $(\rho_0,q_0)$ be initial data satisfying \eqref{init}--\eqref{init2} and $(\rho,u)$ be an associated weak solution to system \eqref{chg of var mass:fluid}--\eqref{chg of variable boundary:no-slip} as given by \cref{NS-body}. Then $(\rho,u)$ satisfies the following relative energy inequality:
 \begin{align}\label{Rel-energy}
 \mc{E}\left([\rho,u]\mid [r,U]\right)(t) +\nu \int\limits_0^{t}\int\limits_{\mc{F}_0} \sigma(u) :\mathbb D (u) \, dy \, ds
 \leq \mc{E}\left([\rho_0,q_0/\rho_0]\mid [r(0),U(0)]\right) + \int\limits_0^{t} \mc{R}(\rho,u,U)(s)\, ds,
 \end{align}
 for a.a. $t\in(0,T)$ and for any
 \begin{equation}\label{rel:test}
 U \in C^\infty([0,T]; \mc H  \cap C^\infty_c(\mathbb R^3) ) \, \text{ and } \, r\in C^\infty([0,T] \times  \mathbb R^3) . 
 \end{equation}
 %  U \in H^1([0,T]; \mc H ) \cap L^\infty( 0,T; \mc{V})
 The remainder term $\mc{R}$ is given by
\begin{multline}\label{Remainder-term}
\mathcal R(\rho, u, U)(t) = 
\nu  \int\limits_{\mc{F}_0}   \sigma(u) :  \mathbb D(U) \, dy  +  \int\limits_{\mc F_0}\left[ \rho \frac{\partial U}{\partial s}\cdot(U-u) + \rho(u-u_{\mc{S}}) \cdot \mathbb D(U)  \cdot (U-u) \right]  
\, dy \\ 
+  m (\ell_{U}-\ell)\cdot \ell'_U  + J_0 (\omega_U -\omega)\cdot \omega'_U
+ \int\limits_{\mc{F}_{0}} \operatorname{det} (\rho \omega, u, U)\,dy - 
\operatorname{det}(m \ell, \omega,\ell_{U}) 
 -  \operatorname{det}(J_{0}\omega,\omega,\omega_{U}) \\ -  \frac{1}{\veps^2}\int\limits_{\mc{F}_0}  p(\rho) \operatorname{div} U \, dy
 + \frac{1}{\veps^2}\int\limits_{\mathbb R^3} (r-\rho) \frac{\partial H^\prime(r)}{\partial s} \,  dy
 - \frac{1}{\veps^2}\int\limits_{\mathbb R^3} \rho(u-u_{\mc{S}}) \cdot \nabla H'(r) \,  dy \\
 + \frac{1}{\veps^2} \int\limits_{\mc S_0} \bigg( \frac{\partial H}{\partial \rho}(r)(\rho_{\mc S}-r) + H(r)\bigg) (t,\cdot) \, dy - \frac{1}{\veps^2}\int\limits_{\mc S_0} \frac{\partial H}{\partial \rho}(r(0,\cdot)) (\rho_{\mc S} - r(0,\cdot)) -H(r)(0,\cdot) \bigg) \, dy ,
\end{multline}
 where $U(t,y)= \ell_{U}(t)
+ \omega_{U}(t) \times y   \quad \mbox{ for all }y \in \mc{S}_{0}.$ 
 \end{theorem}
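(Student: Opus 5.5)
The plan is to expand the relative energy $\mc E([\rho,u]\mid[r,U])(t)$ into pieces, each controlled by a relation already available for weak solutions. We write
\begin{align*}
\mc E([\rho,u]\mid[r,U]) &= \Big(\tfrac12\int_{\mathbb R^3}\rho|u|^2\,dy+\tfrac{1}{\veps^2}\int_{\mc F_0}H(\rho)\,dy\Big) - \int_{\mathbb R^3}\rho u\cdot U\,dy + \tfrac12\int_{\mathbb R^3}\rho|U|^2\,dy\\
&\quad - \tfrac{1}{\veps^2}\int_{\mc F_0}\rho H'(r)\,dy + \tfrac{1}{\veps^2}\int_{\mc F_0}\big(rH'(r)-H(r)\big)\,dy ,
\end{align*}
with the convention \eqref{convention} for the integrals over $\mathbb R^3$. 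The first bracket is bounded by the energy inequality \eqref{energy}, which also produces the dissipation term $\nu\int_0^t\int_{\mc F_0}\sigma(u):\mathbb D(u)$ on the left-hand side. The cross term $\int\rho u\cdot U$ is computed exactly from the weak momentum balance \eqref{weakform-momentum} with $\phi=U$; this choice is admissible by \eqref{rel:test}. The term $\tfrac12\int\rho|U|^2$ is obtained from the weak continuity equation \eqref{weakform-continuity} with $\psi=\tfrac12|U|^2$. In that identity the solid part reduces to the constant-in-time quantities $\tfrac12(m|\ell_U|^2+J_0|\omega_U|^2)$, whose time derivative gives $m\ell_U\cdot\ell_U'+J_0\omega_U\cdot\omega_U'$. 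The term $\int\rho H'(r)$ comes from \eqref{weakform-continuity} with $\psi=H'(r)$. The last term is explicit in $r$ and is differentiated directly.

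A technical point is that $H'(r)$ and $|U|^2$ are not compactly supported. I would use $H'(r)$, or more precisely $H'(r)-H'(1)$ together with the convergence $\rho\to1$ at infinity, and apply a cutoff $\chi_R$ with $R\to\infty$. The integrability $\rho-1\in L^\infty L^\gamma$, the bound $\rho|u|^2\in L^\infty L^1$, and the compact support of $U$ then justify the limit.

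Because \eqref{weakform-continuity} is posed on all of $\mathbb R^3$ while $H$ is integrated only over $\mc F_0$, the pressure-potential terms must be split between $\mc F_0$ and $\mc S_0$. On $\mc S_0$ the transport velocity $u-u_{\mc S}$ vanishes, so $\rho_{\mc S}$ is transported trivially in the body frame. The resulting mismatch consists exactly of the boundary-in-time terms over $\mc S_0$ in the last line of \eqref{Remainder-term}, namely $\frac{1}{\veps^2}\int_{\mc S_0}(H'(r)(\rho_{\mc S}-r)+H(r))$ evaluated at $t$ and at $0$. I will keep these terms explicit rather than estimate them.

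Next I add the identities and collect terms. In the momentum identity the convective term $\int\rho(u-u_{\mc S})\otimes u:\mathbb D(U)$ combines with the transport of $\tfrac12|U|^2$, which gives $\int\rho(u-u_{\mc S})\cdot\nabla U\cdot U$, to form $\int\rho(u-u_{\mc S})\cdot\mathbb D(U)\cdot(U-u)$. Replacing $\nabla U$ by $\mathbb D(U)$ is legitimate here by the symmetric structure inherited from \eqref{weakform-momentum}, with care taken in the rigid region where $\mathbb D(U)=0$. The time-derivative terms combine into $\int\rho\,\partial_sU\cdot(U-u)$ and $m(\ell_U-\ell)\cdot\ell_U'+J_0(\omega_U-\omega)\cdot\omega_U'$. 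The Coriolis and determinant terms carry over verbatim, with signs reversed, from \eqref{weakform-momentum}. The viscous term yields $\nu\int\sigma(u):\mathbb D(U)$. The pressure term yields $-\veps^{-2}\int p(\rho)\operatorname{div}U$.

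From the continuity equation tested with $H'(r)$ we obtain $-\veps^{-2}\int\rho\,\partial_sH'(r)$ and $-\veps^{-2}\int\rho(u-u_{\mc S})\cdot\nabla H'(r)$. Differentiating $rH'(r)-H(r)$ gives $r\,\partial_sH'(r)$. Together these produce $\veps^{-2}\int(r-\rho)\partial_sH'(r)$. In this way every term of \eqref{Remainder-term} appears.

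I expect the main obstacle to be the sign and bookkeeping in the rigid-body contributions. One has to check that the determinant terms and the solid parts of $(\cdot,\cdot)_{\mc H}$ combine correctly, and that the $\mc S_0$ pressure-potential pieces are consistent with the extension of $\rho$ by $\rho_{\mc S}$. A second difficulty is the justification that $\tfrac12|U|^2$ and $H'(r)$ are admissible test functions in the renormalized and weak continuity formulations up to a.e. time $t$. I would handle this first by time mollification and the cutoff argument above, using the regularity in Definition \ref{23:54}.
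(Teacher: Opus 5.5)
Your route is the paper's: add the energy inequality \eqref{energy} to the continuity equation tested with $\frac12|U|^2$, subtract the momentum balance tested with $U$, and then use the continuity equation tested with $H'(r)$ together with $\partial_s\bigl(rH'(r)-H(r)\bigr)=r\,\partial_sH'(r)$. Your bookkeeping of the time-derivative, rigid-body, determinant, viscous and pressure terms agrees with \eqref{Energy-Interm-3}.

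The step that fails is your claim that replacing $\nabla U$ by $\mathbb D(U)$ is legitimate. Since $\nabla\frac12|U|^2=(\nabla U)^\top U$, testing \eqref{weakform-continuity} with $\frac12|U|^2$ gives exactly $\int\rho(u-u_{\mc S})_jU_i\partial_jU_i=\int\rho(u-u_{\mc S})\cdot\nabla U\cdot U$. This differs from the $\mathbb D$-version by $\int\rho(u-u_{\mc S})\cdot W(U)\cdot U$, where $W(U)=\frac12(\nabla U-\nabla^\top U)$. That term does not vanish in general. Already for $\rho\equiv1$ and a divergence-free $u-u_{\mc S}$ vanishing on $\partial\mc S_0$, the $\nabla$-version integrates to zero, while the $\mathbb D$-version equals $\frac12\int(u-u_{\mc S})\cdot(U\cdot\nabla)U$.

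Nor is there a symmetric structure to invoke on the momentum side. In the body frame $(u-u_{\mc S})\otimes u$ is not symmetric, so \eqref{chg of var momentum:fluid} yields the convective term tested against $\nabla\phi$. Writing $\mathbb D(\phi)$ is harmless only in the inertial frame, where $\varrho U\otimes U$ is symmetric. Consequently:
\begin{itemize}
\item with \eqref{weakform-momentum} read literally, you obtain the mixed term $\rho(u-u_{\mc S})\cdot\nabla U\cdot U-\rho(u-u_{\mc S})\cdot\mathbb D(U)\cdot u$;
\item with the correct convective term, you obtain $\rho(u-u_{\mc S})\cdot\nabla U\cdot(U-u)$.
\end{itemize}
Neither is the $\mathbb D(U)$ form in \eqref{Remainder-term}. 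The paper's proof makes the same silent substitution in \eqref{weakform-continuity-U}. The $\nabla U$ form is the one actually used later in \eqref{Term-2-Remainder}, where the full gradient is needed to produce $(u^E-u^E_{\mc S})\cdot\nabla u^E$ and invoke the Euler equation. So you should prove the inequality with $\nabla U$ in that term rather than try to justify $\mathbb D(U)$.

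Two smaller remarks. First, your reading of the $\mc S_0$ pieces as boundary-in-time terms is right and matches \eqref{Energy-Interm-3}. These are $\veps^{-2}\int_{\mc S_0}\bigl(H'(r)(\rho_{\mc S}-r)+H(r)\bigr)$ at time $t$ minus the same at time $0$. They arise because the continuity identity is trivial on $\mc S_0$, where $\rho_{\mc S}$ is time independent and $u=u_{\mc S}$. In \eqref{Remainder-term} these terms are wrongly placed inside $\mc R(s)$, hence under $\int_0^t$, and the sign of $H(r)(0,\cdot)$ is misprinted.

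Second, concerning the far field: $\int_{\mc F_0}H(\rho)$ and $\int_{\mc F_0}\rho H'(r)$ need not be finite on the exterior domain. Near $\rho=1$ one has $H(\rho)\approx H'(1)(\rho-1)$, and $\rho-1$ is only controlled in $L^\gamma$ (and in $L^2$ on the essential set), not in $L^1$. So organise the computation around $H(\rho)-H'(1)(\rho-1)-H(1)$, which is what the energy inequality must control, and test with $H'(r)-H'(1)$. This needs $r-1$ compactly supported and $r$ bounded away from zero. In that case $H'(r)-H'(1)$ is directly admissible and no cutoff is needed. For a general $r\in C^\infty([0,T]\times\mathbb R^3)$ the relative energy itself may be infinite, so your cutoff argument cannot work in that generality.
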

\begin{proof} 
 We begin with testing the momentum equation \eqref{weakform-momentum} by $U$, resulting the following relation:
\begin{multline}\label{weakform-momentum-U}
 \int\limits_{\mc{F}_{0}} \rho u(t,y)\cdot U(t,y)\,dy + m\ell(t)\cdot \ell_{U}(t)
 + J_{0}\omega(t)\cdot \omega_{U}(t) - \int\limits_{\mc{F}_{0}} q_0 \cdot U(0,y)\,dy - m\ell_0\cdot \ell_{U}(0) - J_{0} \omega_0 \cdot \omega_{U}(0) \\ =  \int\limits_{0}^{t} \int\limits_{\mc{F}_{0}}\rho u \cdot \frac{\partial U}{\partial s}\,dy\,ds + \int\limits_{0}^{t}m\ell\cdot \ell'_{U}\,ds
+ \int\limits_{0}^{t} J_{0}\omega\cdot \omega'_{U}\,ds +\int\limits_{0}^{t}\int\limits_{\mc{F}_{0}} \left[\rho(u-u_{\mc{S}}) \otimes u \right] : \mathbb D (U)  \,dy\,ds + \int\limits_{0}^{t}
\operatorname{det}(m \ell,\omega,\ell_{U})\,ds \\  - \int\limits_{0}^{t}\int\limits_{\mc{F}_{0}} \operatorname{det} (\rho \omega, u, U)\,dy\,ds 
 + \int\limits_{0}^{t} \operatorname{det}(J_{0}\omega,\omega,\omega_{U}) \,ds -  \int\limits_{0}^{t}\int\limits_{\mc{F}_{0}} \left(\nu \sigma(u) - \frac{1}{\varepsilon^2}p(\rho)\mathbb{I}\right): \mathbb D (U) \,dy\,ds .
 \end{multline}
Next, we test the continuity equation \eqref{weakform-continuity} by $\frac{1}{2} |U|^2$ which yields
\begin{equation}\label{weakform-continuity-U} \int\limits_{0}^{t}\int\limits_{\mathbb{R}^3} \left[\rho U \cdot \frac{\partial U}{\partial s}  + [\rho (u-u_{\mc{S}}) \cdot \mathbb D(U)] \cdot U  \right]\,dy\,ds = \frac{1}{2}\int\limits_{\mathbb R^3} \rho(t,y)|U(t,y)|^2 \, dy  - 
\frac{1}{2}\int\limits_{\mathbb{R}^3}  \rho_0 |U(0,y)|^2 \, dy    .
\end{equation} 
Summing up the energy inequality \eqref{energy} and \eqref{weakform-continuity-U}, we obtain 
\begin{multline}\label{Energy-interm}
 \frac{1}{2}\int\limits_{\mathbb R^3} \rho\left(|u|^2 + |U|^2  \right)(t,\cdot) \, dy + \frac{1}{\veps^2} \int\limits_{\mc{F}_0} H(\rho)(t,\cdot)\, dy+ \nu \int\limits_0^t\int\limits_{\mc{F}_0} \sigma(u): \mathbb D(u) \, dy \, ds \\
 \leq  \int\limits_{0}^{t}\int\limits_{\mathbb{R}^3} \left[\rho U \cdot \frac{\partial U}{\partial s}  + [\rho (u-u_{\mc{S}}) \cdot \mathbb D(U) ] \cdot U  \right] dy\,ds 
 + \int\limits_{\mathbb R^3} \left(\frac{|q_0|^2}{2\rho_0} + \frac{|\rho_0 U(0,y)|^2}{2\rho_0}\right)  \mathds{1}_{\{\rho_0>0\}} \, d y  + 
\frac{1}{\veps^2} \int\limits_{\mc{F}_0} H(\rho_0)\, dy .
\end{multline}
Recall that by definition (see \eqref{convention}) and the fact that $u=u_{\mc S}$ on $\mc S_0$, we have 
\begin{align*}
&\int\limits_0^t \int\limits_{\mathbb{R}^3} \left[\rho U \cdot \frac{\partial U}{\partial s}  + [\rho (u-u_{\mc{S}}) \cdot \mathbb D(U) ] \cdot U  \right] dy\,ds \\ 
&= \int\limits_{0}^{t}\int\limits_{\mc F_0} \rho U \cdot \frac{\partial U}{\partial s} \, dy \, ds + \int\limits_0^t \left( m \ell_U \cdot \ell'_U + J_0 \omega_U \cdot \omega'_U \right) \, ds +  \int\limits_{0}^{t}\int\limits_{\mc F_0} [\rho (u-u_{\mc{S}}) \cdot \mathbb D(U) ] \cdot U \, dy\,ds.
\end{align*}
Then, subtracting the relation \eqref{weakform-momentum-U} from \eqref{Energy-interm} we get for a.a.  $t\in (0,T)$:
\begin{multline}\label{Energy-Interm-2}
\frac{1}{2}\int\limits_{\mathbb R^3} 
\rho\left|u - U\right|^2(t,\cdot) \, dy + \frac{1}{\veps^2} \int\limits_{\mc{F}_0} H(\rho)(t,\cdot)\, dy
+\nu \int\limits_0^t \int\limits_{\mc{F}_0} \sigma(u): \mathbb D(u) \, dy \, ds\\ \leq \int\limits_{\R^3}  \frac{1}{2\rho_0}\left|q_0 - \rho_0 U(0,y)\right|^2 \mathds{1}_{\{\rho_0>0\}}\, dy 
+  \frac{1}{\veps^2} \int\limits_{\mc{F}_0} H(\rho_0) \, dy 
+ \nu \int\limits_0^t \int\limits_{\mc{F}_0} \sigma(u) :  \mathbb D(U) \, dy \, ds -\frac{1}{\veps^2} \int\limits_0^t \int\limits_{\mc{F}_0} p(\rho) \mathbb I : \mathbb D(U) \, dy\, ds \\
 +\int\limits_0^t \int\limits_{\mc{F}_0}\left[
\rho \frac{\partial U}{\partial s}\cdot (U-u)
+ \rho(u-u_{\mc{S}}) \cdot \mathbb D(U) \cdot (U-u) \right] dy\, ds 
+ \int\limits_0^t  m(  \ell_U-\ell)\cdot \ell'_U  \, ds 
 + \int\limits_0^t  J_0 (\omega_U - \omega)\cdot\omega'_U  \, ds  \\
+ \int\limits_{0}^{t}\int\limits_{\mc{F}_{0}} \operatorname{det} (\rho \omega, u, U)\,dy\,ds - \int\limits_{0}^{t}
\operatorname{det}(m \ell,\omega,\ell_{U})\,ds
 - \int\limits_{0}^{t} \operatorname{det}(J_{0}\omega,\omega,\omega_{U}) \,ds. 
\end{multline}
Finally,  we test the continuity equation in   with $H^\prime(r)$ for  $r\in C^\infty([0,T]\times \R^3)$, that yields
\begin{align}\label{continuity-H-1} \int\limits_{0}^{t}\int\limits_{\mathbb{R}^3} \left[\rho \frac{\partial H^\prime(r)}{\partial s} + \rho (u-u_{\mc{S}})\cdot  \nabla H^\prime(r)\right] dy\,ds = \int\limits_{\mathbb R^3} (\rho H'(r))(t,\cdot) \, dy  - 
\int\limits_{\mathbb{R}^3}  \rho_0 H'(r)(0,\cdot) \, dy   .
\end{align} 
Using the fundamental  theorem of calculus, we further have
\begin{align}\label{continuity-H-2}
    \int\limits_0^t \int\limits_{\mathbb R^3} \frac{\partial p(r)}{\partial s} \, dy\, ds = \int\limits_{\mathbb R^3} p(r)(t,\cdot) \, dy -  \int\limits_{\mathbb R^3} p(r)(0,\cdot) \, dy 
\end{align}
Again, from the relation 
\begin{align*}
H'(r)r - H(r) = p(r) ,
\end{align*}
a direct manipulation gives 
\begin{align}\label{continuity-H-3}
\int\limits_0^t \int\limits_{\mathbb R^3}  r \frac{\partial H'(r)}{\partial s} \, dy\, ds = \int\limits_0^t \int\limits_{\mathbb R^3} \frac{\partial p(r)}{\partial s} \, dy \, ds.
\end{align}

Combining  \eqref{continuity-H-1}--\eqref{continuity-H-3} with \eqref{Energy-Interm-2}, and writing in a more concrete form, we have  
\begin{align}\label{Energy-Interm-3}
&\frac{1}{2}\int\limits_{\mathbb R^3} 
\rho\left|u - U\right|^2(t,\cdot) \, dy + \frac{1}{\veps^2} \int\limits_{\mc{F}_0}\left( H(\rho) -  \frac{\partial H}{\partial \rho}(r)(\rho - r) - H(r) 
\right) 
(t,\cdot)\, dy 
+\nu \int\limits_0^t \int\limits_{\mc{F}_0} \sigma(u) : \mathbb D(u) \, dy \, ds  \notag \\
&  \leq 
\int\limits_{\R^3}  \frac{1}{2\rho_0}\left|q_0 - \rho_0 U(y,0)\right|^2 \mathds{1}_{\{\rho_0>0\}}\, dy 
+  \frac{1}{\veps^2} \int\limits_{\mc{F}_0} \left( H(\rho_0) - \frac{\partial H}{\partial \rho}(r(0,\cdot))\left(\rho_0-r(0,\cdot)\right) - H(r)(0,\cdot) \right) dy \notag  \\
& + \nu \int\limits_0^t \int\limits_{\mc{F}_0}   \sigma(u) :  \mathbb D( U ) \, dy\, ds + \int\limits_0^t \int\limits_{\mc F_0} \left[  \rho \frac{\partial U}{\partial s}\cdot (U-u)  + \rho(u-u_{\mc{S}}) \cdot \mathbb D(U)  \cdot (U-u) \right]  
dy \, ds  \notag \\
 & + \int\limits_0^t m (\ell_U - \ell) \cdot  \ell'_{U} \, ds + \int\limits_0^t J_0 (\omega_U - \omega) \cdot \omega'_U \, ds 
 \notag  \\
&+ \int\limits_{0}^{t}\int\limits_{\mc{F}_{0}} \operatorname{det} (\rho \omega, u, U)\,dy\,ds - \int\limits_{0}^{t}
\operatorname{det}(m \ell, \omega,\ell_{U})\,ds
 - \int\limits_{0}^{t} \operatorname{det}(J_{0}\omega,\omega,\omega_{U}) \,ds  \notag \\
 &  + 
  \frac{1}{\veps^2}\int\limits_0^t\int\limits_{\mathbb R^3} (r-\rho) \frac{\partial H^\prime(r)}{\partial s} \, dy\, ds -  \frac{1}{\veps^2}\int\limits_0^t\int\limits_{\mc{F}_0}  p(\rho) \operatorname{div} U \,dy\, ds   
 - \frac{1}{\veps^2}\int\limits_0^t\int\limits_{\mathbb R^3} \rho(u-u_{\mc{S}}) \cdot \nabla H'(r) \, dy\, ds \notag  \\
 & + \frac{1}{\veps^2} \int\limits_{\mc S_0} \bigg( \frac{\partial H}{\partial \rho}(r)(\rho_{\mc S}-r) + H(r)\bigg) (t,\cdot) \, dy - \frac{1}{\veps^2}\int\limits_{\mc S_0} \frac{\partial H}{\partial \rho}(r(0,\cdot)) (\rho_{\mc S} - r(0,\cdot)) -H(r)(0,\cdot) \bigg) \, dy 
\end{align} 
for a.a.  $t\in (0,T)$. 

This ends the proof of  the relative energy inequality \eqref{Rel-energy}. 
\end{proof}

\begin{cor}[\textbf{Uniform bounds}] \label{Corolary-Noslip}
Suppose that the  assumptions of Theorem \ref{rel-energy} hold true. Then, a weak solutions to system \eqref{chg of var mass:fluid}--\eqref{chg of variable boundary:no-slip}
satisfy  the following  bounds: 
\begin{equation}\label{Uniform-bound-compressible}
\begin{aligned}
&\esssup_{t\in [0,T]} \| \sqrt{\rho} u (t, \cdot)  \|_{L^2(\F_0; \mathbb R^3)} \leq C , \\
&\esssup_{t\in [0,T]} \left\| \bigg[\frac{\rho-1}{\veps}\bigg]_{\ess} (t, \cdot) \right\|_{L^2(\mc F_0)} \leq C , \\
&\esssup_{t\in [0,T]} \left(  \| [\rho^\gamma]_{\res}(t, \cdot)\|_{L^1(\F_0)} + \|[1]_{\res}(t, \cdot)\|_{L^1(\F_0)}  \right) \leq \veps^2 C  , \\
& \ \  \sqrt{\nu} \| u \|_{L^2(0,T; W^{1,2}(\mc F_0;\mathbb R^3))} \leq C ,
\end{aligned}
\end{equation}
where the constant $C>0$ is independent of $\veps$ or $\nu$. In particular, $\|u(t)\|_{\mc H}$ is bounded uniformly in $\nu, \veps$ for almost all $t\in [0,T]$. 
\end{cor}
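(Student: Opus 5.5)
The plan is to apply the relative energy inequality \eqref{Rel-energy} of Theorem \ref{rel-energy} with the trivial test pair $r\equiv 1$, $U\equiv 0$. Equivalently, one can work directly with the energy inequality \eqref{energy} and subtract the linear part of $H$. Since $\int_{\mc F_0}(\rho-1)\,dy$ need not be finite, I would not use \eqref{energy} as written. Instead I would argue with the relative potential $H(\rho)-H'(1)(\rho-1)-H(1)$. This is the form appearing in $\mc E$, and it is finite by the well-prepared hypothesis \eqref{Thm-Assum-1} (or at least by the finiteness of the initial relative energy).

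With $U=0$ and $r=1$ every term of $\mc R$ vanishes, since $\partial_s H'(1)=0$, $\nabla H'(1)=0$ and $\ell_U=\omega_U=0$. The solid terms also cancel, because $\rho_{\mc S}$ is constant in time in body coordinates. This yields
\[
\frac12\int_{\R^3}\rho|u|^2(t)\,dy+\frac1{\veps^2}\int_{\mc F_0}\big(H(\rho)-H'(1)(\rho-1)-H(1)\big)(t)\,dy+\nu\int_0^t\int_{\mc F_0}\sigma(u):\mathbb D(u)\le \mc E_0 .
\]
I then need $\mc E_0$, the initial relative energy with respect to $(1,0)$, to be bounded uniformly. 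This follows from \eqref{Thm-Assum-1} together with $\|u_0^E\|_{\mc H}$ bounded. Using $\frac12\rho_0|q_0/\rho_0|^2\le \rho_0|q_0/\rho_0-u_0^E|^2+\rho_0|u_0^E|^2$, it suffices to control $\int\rho_0|u^E_0|^2$. I would split $\rho_0$ into essential and residual parts and use $\rho_0\le C(1+E(\rho_0|1))$ on the residual set. This last step needs $u^E_0\in L^\infty$, which holds for $s>5/2$.

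The first bound of \eqref{Uniform-bound-compressible} is immediate, and it also gives the uniform bound on $\|u(t)\|_{\mc H}$. For the density bounds I use the standard essential/residual decomposition. Fix a compact interval $[1/2,2]$ and write $[g]_{\ess}=g\mathds 1_{\{1/2\le\rho\le2\}}$ and $[g]_{\res}=g-[g]_{\ess}$. Since $H''(\rho)=p'(\rho)/\rho>0$, the function $E(\rho):=H(\rho)-H'(1)(\rho-1)-H(1)$ satisfies $E(\rho)\ge c(\rho-1)^2$ on $[1/2,2]$. Off this interval it satisfies $E(\rho)\ge c(1+\rho^\gamma)$, because $E$ is convex, vanishes quadratically only at $1$, and grows like $\rho^\gamma$. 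Dividing by $\veps^2$ gives the second bound and the third bound. For the third bound, note $[1]_{\res}=\mathds 1_{\{\rho\notin[1/2,2]\}}$, which is controlled by $E(\rho)/c$.

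For the last bound, the dissipation gives $\nu\int_0^T\int\sigma(u):\mathbb D(u)\le C$. This is the step I expect to need the most care. We have $\sigma(u):\mathbb D(u)=2\mu|\mathbb D(u)-\frac13\operatorname{div}u\,\mathbb I|^2+\eta|\operatorname{div}u|^2$. On the exterior domain I would extend $u$ by $u_{\mc S}$ inside $\mc S_0$, where $\mathbb D(u_{\mc S})=0$, and apply Korn's inequality on $\R^3$ for the trace-free part, $\|\nabla u\|_{L^2}\le C\|\mathbb D(u)-\frac13\operatorname{div}u\,\mathbb I\|_{L^2}$. This yields $\sqrt\nu\|\nabla u\|_{L^2L^2}\le C$.

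Control of the zeroth-order part $\|u\|_{L^2}$ is more delicate, since $\rho$ may vanish. The bound is to be read as a homogeneous $D^{1,2}$ bound plus an $L^6$ bound from Sobolev. Alternatively, one writes $u=[u]_{\ess}+[u]_{\res}$: the essential part is bounded by $\sqrt\rho u$ in $L^2$, and the residual part by Hölder on the set $\{\rho\notin[1/2,2]\}$, whose measure is $O(\veps^2)$, together with $L^6$. Note that $\nu$ is small, so $\sqrt\nu\|u\|_{L^2L^2}\le\sqrt T\,\sup_t\|u(t)\|_{L^2}$, and this reduces to the previous estimate. The constants are independent of $\veps$ and $\nu$ throughout.
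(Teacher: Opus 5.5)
Your argument is the paper's: the relative energy inequality with $(r,U)=(1,0)$, the initial bound from \eqref{Thm-Assum-1}, the essential/residual coercivity of $H(\rho)-H'(1)(\rho-1)-H(1)$, and Korn's inequality on $\R^3$ for the velocity extended rigidly into $\mc S_0$. Two of your additions go beyond what the paper writes. First, your explicit bound on $\int\rho_0|q_0/\rho_0|^2$ via $u_0^E\in L^2\cap L^\infty$. Second, your attention to the zeroth-order part of the $W^{1,2}$-norm, which Korn's inequality (the only tool the paper cites) does not control.

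One step fails as written, however. You cannot reduce to $\esssup_t\|u(t)\|_{L^2}$. The residual part is controlled through $\|u(t)\|_{L^6}\le C\|\nabla u(t)\|_{L^2(\R^3)}$, and $\|\nabla u(t)\|_{L^2}$ is bounded only in $L^2(0,T)$, and only after multiplication by $\sqrt\nu$. On low-density or vacuum regions nothing controls $u(t)$ pointwise in time.

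Keep the time integral instead. Set $A(t)=\{y\in\mc F_0:\rho(t,y)\notin[1/2,2]\}$, so that $|A(t)|\le C\veps^2$. Then
\[
\sqrt\nu\,\|u\|_{L^2(0,T;L^2(\mc F_0))}\le \sqrt{2\nu T}\,\esssup_{t}\|\sqrt\rho\, u(t)\|_{L^2(\mc F_0)}+C\,\esssup_t|A(t)|^{1/3}\,\sqrt\nu\,\|\nabla u\|_{L^2(0,T;L^2(\R^3))}\le C\big(\sqrt\nu+\veps^{2/3}\big),
\]
which is the bound you want.
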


\begin{proof}
Considering   $U=0$ and $r=1$ in the estimate \eqref{Energy-Interm-3}, we have 
\begin{align}\label{Estimate-frm-Rel-En}
&\frac{1}{2}\int\limits_{\R^3} \rho |u|^2 (\cdot,t)  + \frac{1}{\veps^2} \int\limits_{\mc F_0} \left( H(\rho) - \frac{\partial H}{\partial \rho}(1) (\rho-1) - H(1) \right) (\cdot, t) \, dy  + \nu \int\limits_0^t \int\limits_{\mc F_0} \sigma(u) : \mathbb D(u) \, dy \, ds \notag \\
&\leq \int\limits_{\R^3} \frac{|q_0|^2}{2\rho_0}  + \frac{1}{\veps^2} \int\limits_{\mc F_0} \left( H(\rho_0) - \frac{\partial H}{\partial \rho}(1) (\rho_0-1)  - H(1)  \right) \, dy ,
\end{align}
which is uniform in $\veps$ or $\nu$, thanks to the assumption \eqref{Thm-Assum-1}.

We have the following coercivity properties: for any compact set $K\subset (0,\infty)$, there exists some constant $c(K)$ such that 
\begin{align*}
& H(\rho) - \frac{\partial H}{\partial \rho}(1)(\rho-1) - H(1) \geq c(K) \| \rho - 1 \|_{L^2(\mc F_0)}  \ \text{ if } \, \rho \in K ,\\
& H(\rho) - \frac{\partial H}{\partial \rho}(1)(\rho-1) - H(1) \geq c(K) (1 + \rho^\gamma )  \ \text{ if } \, \rho \in (0,\infty)\setminus K ,
\end{align*}
see for instance \cite[Section 4]{Feireisl-Novotny-INVISCID} (in the context of Navier-Stokes-Fourier system). Now, 
it is convenient to introduce the following decomposition (see \cite[Section 4]{Feireisl-Novotny-INVISCID}):  
\begin{align*}
\Sigma = [\Sigma]_{\ess} + [\Sigma]_{\res} , 
\end{align*}
where $[\Sigma]_{\ess} =  \Sigma \mathds{1}_{\{1/2\leq \rho \leq 2\}}, \ \  [\Sigma]_{\res} =  \Sigma - [\Sigma]_{\ess}$. Consequently, using the estimate \eqref{Estimate-frm-Rel-En} and the assumption \eqref{Thm-Assum-1}, we deduce the bounds in \eqref{Uniform-bound-compressible}. In particular, the bound $\eqref{Uniform-bound-compressible}_4$ is followed from the generalized Korn's inequality
\begin{align*}
\left\| \nabla w + \nabla^\top w - \frac{2}{3} \Div w  \right\|_{L^2(\mathbb R^3, \mathbb R^{3\times 3})} \geq C \|\nabla w \|_{L^2(\mathbb R^3;\mathbb R^{3\times 3})} , \ \ \ \text{for } \,  w \in W^{1,2}(\R^3; \R^3) , 
\end{align*}
and the estimate \eqref{Estimate-frm-Rel-En}. 
\end{proof}

%Moreover, the uniform boundedness of the right-hand side of \eqref{Uniform-bound-compressible} holds good due to  the assumption \eqref{Thm-Assum-1}. 

\subsection{A Kato type ``fake" layer}

We cannot use the solution  $u^{E}$ of \eqref{chg of var momentum:fluidE}--\eqref{chg of var initial condE} as a test function in the balance of momentum equation \eqref{weakform-momentum} (corresponds to Navier--Stokes system) or in the relative energy inequality \eqref{Rel-energy}. The reason is that $u^{E}$ has $H^1$-space regularity only in the fluid domain $\mc{F}_0$ whereas the requirement for the test function in \eqref{weakform-momentum} or in \eqref{Rel-energy} is the $H^1$ regularity for the whole $\mathbb{R}^3$. That means, we need to modify $u^{E}$ such that there is regular $H^1$ transition through the interaction boundary $\partial \mc{S}_0$. In order to overcome this difficulty we construct a ``fake" layer. If we add this layer to the Euler solution, it will give a suitable test function and yet quite close to the Euler solution.  Let us recall the following result from  \cite[Proposition 2]{MR2993933}.

{
\begin{proposition}\label{Proposition-Fake-Layer}
Let $\mc{S}_0\subset\mathbb R^3$ be a bounded, closed, simply
connected set with boundary of class $C^{1,1}$, and let
$\mc{F}_0:=\mathbb R^3\setminus\mc{S}_0.$
Assume that
$
(u^E,p^E,\ell^E,\omega^E)
$
is the strong solution to
\eqref{chg of var momentum:fluidE}--\eqref{chg of var initial condE}
corresponding to initial data
$
(u_0^E,\ell_0^E,\omega_0^E)
$
as in \cref{Thm-Euler-body}.
Fix $c>0$. Then there exists $\nu_0>0$ such that, for every
$0<\nu\leq\nu_0$, there exists a vector field
$
v^F
$
with the following properties:

\begin{enumerate}

\item\label{fake-layer-structure}
The vector field $v^F$ satisfies
$
v^F\in C^1([0,T];\mc{H}^E)$,
$v^F=0
\quad\text{in }\mc{S}_0,
$
and
$
\operatorname{div}v^F=0
\quad\text{in }\mathcal D'(\mathbb R^3).
$

\item\label{fake-layer-support}
The vector field is supported in the boundary layer:
$
\operatorname{supp}v^F
\subset
\overline{\Gamma_{c\nu}}.
$
Moreover, it agrees with the relative Euler velocity in a smaller
boundary layer: $v^F=u^E-u_{\mc S}^E
\quad\text{in }\Gamma_{c\nu/2}$, where $u_{\mc S}^E(t,y)
:=
\ell^E(t)+\omega^E(t)\times y.$

\item\label{fake-layer-zero-order}
For every $1\leq p\leq\infty$ and every $\alpha\geq0$,
\begin{equation*}
\left\|d^\alpha v^F\right\|_
{L^\infty(0,T;L^p(\mathbb R^3))}
+
\left\|d^\alpha\partial_tv^F\right\|_
{L^\infty(0,T;L^p(\mathbb R^3))}
\leq
C_{\alpha,p}\nu^{\alpha+1/p}.
\end{equation*}

In particular,
\[
\|v^F\|_{L^\infty((0,T)\times\mathbb R^3)}
+
\|\partial_tv^F\|_
{L^\infty((0,T)\times\mathbb R^3)}
\leq C,
\]
and
\[
\|v^F\|_{C([0,T];\mc{H}^E)}
+
\|\partial_tv^F\|_{C([0,T];\mc{H}^E)}
\leq C\nu^{1/2}.
\]

\item\label{fake-layer-gradient}
For every $1\leq p\leq\infty$ and every $\alpha\geq0$,
\begin{equation*}
    \left\|d^\alpha\nabla v^F\right\|_
{L^\infty(0,T;L^p(\mc{F}_0))}
\leq
C_{\alpha,p}\nu^{\alpha-1+1/p}.
\end{equation*}
In particular,
$\|\nabla v^F\|_
{L^\infty(0,T;L^2(\mc{F}_0))}
\leq
C\nu^{-1/2},$  
$\|d\nabla v^F\|_
{L^\infty(0,T;L^2(\mc{F}_0))}
\leq
C\nu^{1/2},$ $\|d\nabla v^F\|_
{L^\infty((0,T)\times\mc{F}_0)}
\leq C,$ 
and $\|d^2\nabla v^F\|_
{L^\infty((0,T)\times\mc{F}_0)}
\leq
C\nu.$

\item\label{fake-layer-corrected-Euler}
The corrected Euler velocity satisfies
$
u^E-v^F
\in
C([0,T];\mc{H}^E)
\cap
L^2(0,T;\mc{V}).
$

\end{enumerate}

The constants $C_{\alpha,p}$ are independent of $\nu$, but may
depend on $c$, $\alpha$, $p$, $T$, the geometry of $\mc{S}_0$, and
the strong Euler solution.

%In \cref{fake-layer-gradient}, $\nabla v^F$ denotes the gradient in$\mc{F}_0$. If it is extended by zero to $\mc{S}_0$, it is understoodas the piecewise gradient and not as the distributional gradient ofthe zero extension.
\end{proposition}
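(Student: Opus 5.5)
The plan is to follow Kato's construction via a stream function, as in \cite[Proposition 2]{MR2993933}. Set $w:=u^E-u^E_{\mc S}$ in $\mc F_0$. By \eqref{chg of var mass:fluidE}, $\operatorname{div} w=0$, since $\operatorname{div}u^E_{\mc S}=0$. By \eqref{chg of var boundaryE}, $w\cdot n=0$ on $\partial\mc S_0$.

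\textbf{Step 1: vector potential.} Since $\mc S_0$ is simply connected and $w$ is tangent to $\partial\mc S_0$, the flux of $w$ through $\partial\mc S_0$ vanishes. Hence there is a vector potential $\psi$ with $w=\operatorname{curl}\psi$ in a tubular neighbourhood $\{d<\delta_0\}\cap\mc F_0$ of $\partial\mc S_0$. We normalise $\psi=0$ on $\partial\mc S_0$, for instance by taking $\psi$ of the form $\psi = \int_0^{d} (\text{tangential rotation of } w)$ along normal rays in tubular coordinates, or by invoking the standard Bogovski\u{\i}/Hodge construction. The regularity of $\psi$ comes from \cref{Thm-Euler-body}: $u^E\in C([0,T];H^s)$ with $s>5/2$, together with $\ell^E,\omega^E\in C^1$. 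This gives $\psi$ and $\partial_t\psi$ bounded in $C^1$ with uniformly bounded second derivatives near the boundary, using $\partial_t u^E\in C([0,T];H^{s-1})$ from the equation. Since $\psi|_{\partial\mc S_0}=0$, we also get $|\psi|\le Cd$.

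\textbf{Step 2: definition of the corrector.} Fix a smooth cutoff $\chi:[0,\infty)\to[0,1]$ with $\chi=1$ on $[0,1/2]$ and $\chi=0$ on $[1,\infty)$. Set
\[
v^F:=\operatorname{curl}\bigl(\chi(d/(c\nu))\,\psi\bigr)\ \text{in }\mc F_0,\qquad v^F:=0\ \text{in }\mc S_0,
\]
and take $\nu_0$ so small that $c\nu_0<\delta_0$. The listed properties then follow directly.
\begin{itemize}
\item[(1)] $v^F$ is a curl, so it is divergence free. Its normal trace on $\partial\mc S_0$ equals that of $w$, namely zero, so $\operatorname{div}v^F=0$ in $\mathcal D'(\mathbb R^3)$. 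Together with $v^F=0$ in $\mc S_0$ this gives $v^F\in C^1([0,T];\mc H^E)$.
\item[(2)] Since $\chi(d/(c\nu))$ vanishes for $d\ge c\nu$, $\operatorname{supp}v^F\subset\overline{\Gamma_{c\nu}}$. Where $\chi\equiv1$, i.e.\ in $\Gamma_{c\nu/2}$, we have $v^F=w$.
\item[(3)] Write
\[
v^F=\chi\,w+(c\nu)^{-1}\chi'(d/(c\nu))\,\nabla d\times\psi .
\]
Since $|\psi|\le Cd\le Cc\nu$ on the support, $|v^F|\le C$. On the support $d^\alpha\le(c\nu)^\alpha$, and $|\Gamma_{c\nu}|\sim\nu$ because $\partial\mc S_0$ is smooth and compact. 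Hence $\|d^\alpha v^F\|_{L^p}\le C\nu^{\alpha+1/p}$, and the same argument applies to $\partial_t v^F$.
\item[(4)] One more derivative costs at most $\nu^{-1}$. The term $\chi''\nu^{-2}\psi$ is $O(\nu^{-1})$ by $|\psi|\le Cd$, and the other terms are of the same or lower order. The gradient estimates follow the same way.
\item[(5)] Outside $\Gamma_{c\nu}$ we have $u^E-v^F=u^E$, and in $\Gamma_{c\nu/2}$ we have $u^E-v^F=u^E_{\mc S}$, which matches the rigid motion inside $\mc S_0$. So $\mathbb D(u^E-v^F)=0$ in a neighbourhood of $\overline{\mc S_0}$ and $u^E-v^F$ is $H^1$ across the interface. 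The weighted gradient bound defining $\mc V$ then comes from \eqref{regularity-Euler}, the compact support of $v^F$, and $\nabla u^E_{\mc S}$ being constant on a bounded set.
\end{itemize}

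\textbf{Main obstacle.} The delicate step is Step 1. We need a potential vanishing on $\partial\mc S_0$ with $C^1$-in-time control, in a merely $C^{1,1}$, or here smooth, geometry. I would use tubular coordinates $y=\pi(y)+d\,\nabla d$ and solve locally. Alternatively, in the simply connected case, take any potential and subtract a gradient to kill its trace: on the boundary $n\cdot\operatorname{curl}\psi=0$ means the tangential part of $\psi$ is closed on the surface, hence a surface gradient, which can be extended and removed.
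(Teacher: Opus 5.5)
Your proposal is correct and is essentially the paper's construction. In three dimensions your potential $\psi$, with $\operatorname{curl}\psi=w$ and $\psi=0$ on $\partial\mc S_0$, is the Hodge dual of the antisymmetric tensor $a_F$ from Kato's Lemma A.1 that the paper uses: with $a_{F,ij}=\epsilon_{ijk}\psi_k$ one has $\partial_j a_{F,ij}=(\operatorname{curl}\psi)_i$, so $\operatorname{div}(z_h a_F)=\operatorname{curl}(\chi(d/(c\nu))\psi)$. The cutoff at scale $c\nu$, the bound $|\psi|\le Cd$ and the pointwise estimates on $\Gamma_{c\nu}$ then match the paper's Steps 3--6, and the only real difference is that you sketch how to build the potential (integration along normal rays, or correcting an arbitrary potential by a gradient) where the paper simply cites Kato's lemma.
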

\begin{proof}
Let us set $h:=c\nu.$

\medskip
\noindent\textbf{Step 1: Geometry of the boundary layer.}

Since \(\partial\mc{S}_0\) is of class \(C^{1,1}\), there exists
\(\delta_0>0\) such that every point
$
y\in\mc{F}_0,
\quad
0<d(y)<\delta_0,
$
has a unique nearest point
$
\pi(y)\in\partial\mc{S}_0.
$
Moreover, in the exterior tubular neighbourhood
\[
\mathcal U_{\delta_0}
:=
\left\{
 y\in\mc{F}_0:\ 0<d(y)<\delta_0
\right\},
\]
the distance function \(d\) belongs to \(C^{1,1}\) and satisfies
\[
y=\pi(y)+d(y)n(\pi(y)),
\qquad
|\nabla d(y)|=1
\]
for almost every \(y\in\mathcal U_{\delta_0}\). In addition,
$
\|\nabla^2d\|_{L^\infty(\mathcal U_{\delta_0})}
\leq C.
$
Here and below, the second derivatives of \(d\) are understood almost
everywhere. Choose \(\nu_0>0\) sufficiently small so that
$
c\nu_0<\min\{\delta_0,1\}.
$
Thus, for \(0<\nu\leq\nu_0\), the boundary layer
\(\Gamma_h=\Gamma_{c\nu}\) is contained in
\(\mathcal U_{\delta_0}\).

We next recall the measure estimate for \(\Gamma_h\). Using the normal
coordinates
\[
\Phi(q,r):=q+r n(q),
\qquad
q\in\partial\mc{S}_0,
\quad
0<r<h,
\]
we have
\[
|\Gamma_h|
=
\int_{\partial\mc{S}_0}
\int_0^h J(q,r)\,dr\,dS(q),
\]
where \(J(q,r)\) is the Jacobian of the normal-coordinate map. Since
\(\partial\mc{S}_0\) is compact and of class \(C^{1,1}\), this
Jacobian is uniformly bounded for \(0<r<\delta_0\). Consequently,
\begin{equation}\label{eq:fake-layer-measure}
|\Gamma_h|
\leq Ch
=
Cc\nu.
\end{equation}

\medskip
\noindent\textbf{Step 2: Construction and estimates for the
antisymmetric potential.}

Set
$
w:=u^E-u_S^E.
$ Since both $u^E$ and $u_S^E$ are divergence-free and the Euler
impermeability condition gives
$
u^E\cdot n=u_S^E\cdot n
        \quad\text{on }\partial\mathcal S_0,
$
we have
$
\operatorname{div}w=0\quad\text{in }\mathcal F_0,
\quad
w\cdot n=0\quad\text{on }\partial\mathcal S_0.
$
By \cite[Lemma A.1]{Kato-zero-viscosity}, there exists an
antisymmetric tensor field
$
a_F=(a_{F,ij})_{1\leq i,j\leq3},
\qquad
a_{F,ij}=-a_{F,ji},
$
such that, in the fixed tubular neighbourhood
\(\mathcal U_{\delta_0}\),
\begin{equation}\label{eq:fake-layer-potential}
\partial_j a_{F,ij}=w_i,
\qquad
a_F=0
\quad\text{on }\partial\mc{S}_0.
\end{equation}
Here and below, repeated indices are summed over.

The regularity of the strong Euler solution, together with the
construction of \(a_F\), gives
\begin{align}
\sup_{t\in[0,T]}
\Big(
 \|w(t)\|_{W^{1,\infty}(\mathcal U_{\delta_0})}
 +\|\partial_tw(t)\|_{L^\infty(\mathcal U_{\delta_0})}
 +\|a_F(t)\|_{W^{1,\infty}(\mathcal U_{\delta_0})}
 +\|\partial_ta_F(t)\|_{W^{1,\infty}(\mathcal U_{\delta_0})}
\Big)
\leq C.
\label{eq:fake-layer-regularity}
\end{align}
The corresponding mappings are continuous in time with values in the
indicated spaces.

Let \(y\in\Gamma_h\), and let \(\pi(y)\) be the nearest point to \(y\)
on \(\partial\mc{S}_0\). Since $a_F(t,\pi(y))=0,$ the fundamental theorem of calculus, applied componentwise along the
segment joining \(\pi(y)\) to \(y\), gives
\begin{align*}
a_{F,ij}(t,y)
=
a_{F,ij}(t,y)-a_{F,ij}(t,\pi(y))
=
\int_0^1
\nabla a_{F,ij}
\bigl(t,\pi(y)+s(y-\pi(y))\bigr)
\mathbin{\cdot}(y-\pi(y))\,ds.
\end{align*}
It follows from \eqref{eq:fake-layer-regularity} that
\begin{equation}\label{eq:fake-layer-a-distance}
|a_F(t,y)|
\leq
C|y-\pi(y)|
=
Cd(y).
\end{equation}
Applying the same componentwise argument to \(\partial_ta_F\), we
obtain
\begin{equation}\label{eq:fake-layer-at-distance}
|\partial_ta_F(t,y)|
\leq
Cd(y),
\qquad
(t,y)\in[0,T]\times\Gamma_h.
\end{equation}

\medskip
\noindent\textbf{Step 3: Definition of the fake boundary layer.}

Choose a function \(\xi\in C^\infty([0,\infty))\) such that
\[
0\leq\xi\leq1,
\qquad
\xi(r)=1
\quad\text{for }0\leq r\leq\frac12,
\qquad
\xi(r)=0
\quad\text{for }r\geq1.
\]
Define
\[
z_h(y)
:=
\xi\left(\frac{d(y)}{h}\right),
\qquad
y\in\mc{F}_0.
\]

We define an antisymmetric tensor \(A_h\) on \(\mathbb R^3\) by
\[
A_h(t,y)
:=
\begin{cases}
z_h(y)a_F(t,y),
&
y\in\mc{F}_0,
\\[1mm]
0,
&
y\in\mc{S}_0.
\end{cases}
\]
Since \(a_F=0\) on \(\partial\mc{S}_0\), the tensor
\(z_ha_F\) has zero trace on \(\partial\mc{S}_0\). We now define
\[
v^F:=\operatorname{div}A_h
\qquad
\text{in }\mathcal D'(\mathbb R^3).
\]
Thus,
\begin{equation}\label{eq:fake-layer-definition}
v^F(t,y)
=
\begin{cases}
\operatorname{div}(z_ha_F)(t,y),
&
y\in\mc{F}_0,
\\[1mm]
0,
&
y\in\mc{S}_0.
\end{cases}
\end{equation}
Using \eqref{eq:fake-layer-potential} and the product rule, we obtain,
in \(\mc{F}_0\),
\begin{align}\label{eq:fake-layer-formula}
v_i^F
=
\partial_j(z_ha_{F,ij})
=
z_h\partial_ja_{F,ij}
+
(\partial_jz_h)a_{F,ij}
=
z_hw_i
+
\frac1h
\xi'\left(\frac{d}{h}\right)
a_{F,ij}\partial_jd.
\end{align}

Since \(\xi(r)=0\) for \(r\geq1\), both \(z_h\) and its derivatives
vanish when \(d\geq h\). Therefore,
\[
\operatorname{supp}v^F
\subset
\overline{\Gamma_h}
=
\overline{\Gamma_{c\nu}}.
\]

Moreover, \(A_h\) is antisymmetric. Hence, 
\begin{align*}
\operatorname{div}v^F
=
\partial_i\partial_jA_{h,ij}
=
\frac12
\partial_i\partial_j
\bigl(A_{h,ij}+A_{h,ji}\bigr)
=0.
\end{align*}
Since \(v^F=0\) in \(\mc{S}_0\), it is a rigid vector field in the
solid, with zero translational and angular velocities. Thus,
\[
v^F(t)\in\mc{H}^E
\qquad
\text{for every }t\in[0,T].
\]
The time regularity in
\eqref{eq:fake-layer-regularity} also shows that
$
t\longmapsto v^F(t)
$
is continuously differentiable with values in \(L^2(\mathbb R^3)\).
Both \(v^F\) and \(\partial_tv^F\) are divergence-free and vanish in
\(\mc{S}_0\). Consequently,
\begin{equation}\label{eq:fake-layer-C1H}
v^F\in C^1([0,T];\mc{H}^E).
\end{equation}

Finally, if \(d(y)<h/2\), then
$
z_h(y)=1,
\quad
\nabla z_h(y)=0.
$
It follows from \eqref{eq:fake-layer-definition} and
\eqref{eq:fake-layer-potential} that
\begin{equation}\label{eq:fake-layer-matching}
v^F
=
\operatorname{div}a_F
=
u^E-u_{\mc S}^E
\qquad
\text{in }\Gamma_{h/2}
=
\Gamma_{c\nu/2}.
\end{equation}
This proves the structural and localization properties in
(\ref{fake-layer-structure}) and (\ref{fake-layer-support}).

\medskip
\noindent\textbf{Step 4: Pointwise estimates for \(v^F\) and
\(\partial_tv^F\).}

On the support of \(\xi'(d/h)\), one has
$
0\leq d(y)\leq h.
$
Using \eqref{eq:fake-layer-regularity},
\eqref{eq:fake-layer-a-distance}, and \(|\nabla d|=1\), we obtain
\begin{align*}
\left|
\frac1h
\xi'\left(\frac{d}{h}\right)
a_F\nabla d
\right|
\leq
\frac{C}{h}|a_F|
\leq
C\frac{d}{h}
\leq C.
\end{align*}
Consequently, formula \eqref{eq:fake-layer-formula} gives
\begin{equation}\label{eq:fake-layer-v-Linfty}
|v^F(t,y)|\leq C
\qquad
\text{for a.e. }(t,y)\in(0,T)\times\mathbb R^3.
\end{equation}

Since \(d\) and \(z_h\) are independent of time, differentiating
\eqref{eq:fake-layer-formula} gives
\begin{equation}\label{eq:fake-layer-dtv-formula}
\partial_tv_i^F
=
z_h\partial_tw_i
+
\frac1h
\xi'\left(\frac{d}{h}\right)
\partial_ta_{F,ij}\partial_jd.
\end{equation}
Using \eqref{eq:fake-layer-regularity},
\eqref{eq:fake-layer-at-distance}, and \(d\leq h\), we obtain
\begin{align*}
\left|
\frac1h
\xi'\left(\frac{d}{h}\right)
\partial_ta_F\nabla d
\right|
\leq
\frac{C}{h}|\partial_ta_F|
\leq
C\frac{d}{h}
\leq C.
\end{align*}
Therefore,
\begin{equation}\label{eq:fake-layer-dtv-Linfty}
|\partial_tv^F(t,y)|
\leq C
\quad
\text{for a.e. }(t,y)\in(0,T)\times\mathbb R^3.
\end{equation}

Let \(1\leq p<\infty\) and \(\alpha\geq0\). Since \(v^F\) is
supported in \(\Gamma_h\), \(d\leq h\) on its support, and
\(|v^F|\leq C\), we have
\begin{align*}
\|d^\alpha v^F(t)\|_{L^p(\mathbb R^3)}^p
=
\int_{\Gamma_h}d^{\alpha p}|v^F(t,y)|^p\,dy
\leq
Ch^{\alpha p}|\Gamma_h|.
\end{align*}
Using \eqref{eq:fake-layer-measure}, we obtain
\begin{equation}\label{eq:fake-layer-weighted-v}
\|d^\alpha v^F(t)\|_{L^p(\mathbb R^3)}
\leq
C_{\alpha,p}h^{\alpha+1/p}.
\end{equation}
For \(p=\infty\), the same conclusion follows directly from
\eqref{eq:fake-layer-v-Linfty} and \(d\leq h\):
\begin{equation}\label{eq:fake-layer-weighted-v-infty}
\|d^\alpha v^F(t)\|_{L^\infty(\mathbb R^3)}
\leq
C_\alpha h^\alpha.
\end{equation}
The same argument, using
\eqref{eq:fake-layer-dtv-Linfty}, gives for every \(1\leq p\leq\infty\),
\begin{equation}\label{eq:fake-layer-weighted-dtv}
\|d^\alpha\partial_tv^F(t)\|_{L^p(\mathbb R^3)}
\leq
C_{\alpha,p}h^{\alpha+1/p}.
\end{equation}
Since \(h=c\nu\), estimates
\eqref{eq:fake-layer-weighted-v}--\eqref{eq:fake-layer-weighted-dtv}
yield
\[
\left\|d^\alpha v^F\right\|_
{L^\infty(0,T;L^p(\mathbb R^3))}
+
\left\|d^\alpha\partial_tv^F\right\|_
{L^\infty(0,T;L^p(\mathbb R^3))}
\leq
C_{\alpha,p}\nu^{\alpha+1/p}.
\]
In particular, choosing \(\alpha=0\) and \(p=2\), we find
\[
\|v^F(t)\|_{L^2(\mathbb R^3)}
+
\|\partial_tv^F(t)\|_{L^2(\mathbb R^3)}
\leq
C\nu^{1/2}.
\]
Since \(v^F\) and \(\partial_tv^F\) vanish in \(\mc{S}_0\), their
translational and angular rigid components are zero. Therefore, their
\(\mc{H}^E\)-norms are controlled by their
\(L^2(\mathbb R^3)\)-norms. Together with
\eqref{eq:fake-layer-C1H}, this gives
\[
\|v^F\|_{C([0,T];\mc{H}^E)}
+
\|\partial_tv^F\|_{C([0,T];\mc{H}^E)}
\leq
C\nu^{1/2}.
\]
This proves (\ref{fake-layer-zero-order}).

\medskip
\noindent\textbf{Step 5: Estimates for the spatial gradient.}

We differentiate \eqref{eq:fake-layer-formula} with respect to
\(y_k\). Using
\[
\partial_kz_h
=
\frac1h
\xi'\left(\frac{d}{h}\right)\partial_kd,
\]
we obtain
\begin{multline}\label{eq:fake-layer-gradient-formula}
\partial_kv_i^F
=
z_h\partial_kw_i
+
\frac1h
\xi'\left(\frac{d}{h}\right)
(\partial_kd)w_i
+
\frac1h
\xi'\left(\frac{d}{h}\right)
(\partial_ka_{F,ij})(\partial_jd)
+
\frac1h
\xi'\left(\frac{d}{h}\right)
a_{F,ij}\partial_{kj}d \\
+
\frac1{h^2}
\xi''\left(\frac{d}{h}\right)
a_{F,ij}(\partial_kd)(\partial_jd).
\end{multline}
We estimate the five terms in
\eqref{eq:fake-layer-gradient-formula}. From
\eqref{eq:fake-layer-regularity},
$|w|+|\nabla w|+|\nabla a_F|\leq C.$
Furthermore,
\[
|\nabla d|\leq1,
\qquad
|\nabla^2d|\leq C,
\qquad
|a_F|\leq Cd.
\]
Consequently,
\begin{align*}
|\nabla v^F|
&\leq
C
+
\frac{C}{h}
+
\frac{C}{h}
+
C\frac{d}{h}
+
C\frac{d}{h^2}.
\end{align*}
Since \(d\leq h\) on \(\Gamma_h\) and \(h\leq1\), we conclude that
\begin{equation}\label{eq:fake-layer-gradient-pointwise}
|\nabla v^F(t,y)|
\leq
\frac{C}{h}
\qquad
\text{for a.e. }(t,y)\in(0,T)\times\Gamma_h.
\end{equation}

The gradient vanishes in the fluid region where \(d>h\). Therefore,
for \(1\leq p<\infty\) and \(\alpha\geq0\),
\begin{align*}
\|d^\alpha\nabla v^F(t)\|_{L^p(\mc{F}_0)}^p
=
\int_{\Gamma_h}
d^{\alpha p}|\nabla v^F(t,y)|^p\,dy
\leq
\frac{C}{h^p}
h^{\alpha p}|\Gamma_h|.
\end{align*}
Using \eqref{eq:fake-layer-measure}, we obtain
\begin{equation}\label{eq:fake-layer-weighted-gradient}
\|d^\alpha\nabla v^F(t)\|_{L^p(\mc{F}_0)}
\leq
C_{\alpha,p}h^{\alpha-1+1/p}.
\end{equation}
For \(p=\infty\), estimate
\eqref{eq:fake-layer-gradient-pointwise} and \(d\leq h\) give
\begin{equation}\label{eq:fake-layer-weighted-gradient-infty}
\|d^\alpha\nabla v^F(t)\|_{L^\infty(\mc{F}_0)}
\leq
C_\alpha h^{\alpha-1}.
\end{equation}

Since \(h=c\nu\), estimates
\eqref{eq:fake-layer-weighted-gradient} and
\eqref{eq:fake-layer-weighted-gradient-infty} imply for every \(1\leq p\leq\infty\) and every \(\alpha\geq0\),
$$\left\|d^\alpha\nabla v^F\right\|_
{L^\infty(0,T;L^p(\mc{F}_0))}
\leq
C_{\alpha,p}\nu^{\alpha-1+1/p}.$$
Choosing \(\alpha=0\) and \(p=2\), we obtain
$
\|\nabla v^F\|_
{L^\infty(0,T;L^2(\mc{F}_0))}
\leq
C\nu^{-1/2}.
$
Choosing \(\alpha=1\) and \(p=2\), we obtain the additional estimate
$
\|d\nabla v^F\|_
{L^\infty(0,T;L^2(\mc{F}_0))}
\leq
C\nu^{1/2}.
$
Choosing \(\alpha=1\) and \(p=\infty\), we obtain
\[
\|d\nabla v^F\|_
{L^\infty((0,T)\times\mc{F}_0)}
\leq C.
\]
Finally, choosing \(\alpha=2\) and \(p=\infty\), we obtain
$
\|d^2\nabla v^F\|_
{L^\infty((0,T)\times\mc{F}_0)}
\leq
C\nu.
$
This proves (\ref{fake-layer-gradient}).

\medskip
\noindent\textbf{Step 6: Regularity of the corrected Euler velocity.}

We identify the Euler state with
the piecewise-defined field
\[
\overline{u}^E(t,y)
:=
\begin{cases}
u^E(t,y), & y\in\mc F_0,\\[1mm]
u_{\mc S}^E(t,y), & y\in\mc S_0,
\end{cases}
\qquad
u_{\mc S}^E(t,y)
=
\ell^E(t)+\omega^E(t)\times y.
\]
Both $u^E$ and $u_{\mc S}^E$ are divergence-free in their respective
domains. Moreover, the impermeability condition
ensures that no measure concentrated on $\partial\mc S_0$ appears in
the distributional divergence of $\overline{u}^E$. Consequently,
$\operatorname{div}\overline{u}^E=0
\text{ in }\mathcal D'(\mathbb R^3).$
Since $\overline{u}^E=u_{\mc S}^E$ in $\mc S_0$, we also have
$\mathbb D(\overline{u}^E)=0
\text{ in }\mc S_0.$
Thus, $\overline{u}^E(t)\in\mc H^E$ for every $t\in[0,T]$. In what
follows, as usual, we continue to denote this global extension simply
by $u^E$.

Define the corrected Euler velocity by $\widetilde{u}_\nu^E:=u^E-v^F
\text{ in }\mathbb R^3.$
We first verify its piecewise $H^1$-regularity. For each fixed
$\nu>0$, the estimates established above imply $v^F|_{\mc F_0}
\in
L^\infty(0,T;H^1(\mc F_0)).$
 Since the Euler solution satisfies $u^E|_{\mc F_0}
\in
L^\infty(0,T;H^s(\mc F_0)),
\quad s>\frac52,$ it follows that $\widetilde{u}_\nu^E|_{\mc F_0}
\in
L^\infty(0,T;H^1(\mc F_0)).$
On the other hand, $v^F=0$ in $\mc S_0$, and hence $\widetilde{u}_\nu^E
=
u_{\mc S}^E
\text{ in }\mc S_0.$ Since $u_{\mc S}^E$ is a rigid velocity and $\mc S_0$ is bounded, we
also have
\[
\widetilde{u}_\nu^E|_{\mc S_0}
\in
L^\infty(0,T;H^1(\mc S_0)).
\]
By
\eqref{eq:fake-layer-matching},
$v^F=u^E-u_{\mc S}^E
\text{ in }\Gamma_{c\nu/2}$.
Therefore, in a neighbourhood of $\partial\mc S_0$ on the fluid side,
\[
\widetilde{u}_\nu^E
=
u^E-v^F
=
u^E-\big(u^E-u_{\mc S}^E\big)
=
u_{\mc S}^E.
\]
Consequently, $\left.\widetilde{u}_\nu^E\right|_{\partial\mc S_0,\mathrm{fluid}}
=
u_{\mc S}^E.$ Since $\widetilde{u}_\nu^E=u_{\mc S}^E$ throughout $\mc S_0$, its trace
from the solid side satisfies $\left.\widetilde{u}_\nu^E\right|_{\partial\mc S_0,\mathrm{solid}}
=
u_{\mc S}^E$.
Thus, we have $\widetilde{u}_\nu^E
\in
L^\infty(0,T;H^1(\mathbb R^3))$. Furthermore, both $u^E$ and $v^F$ are divergence-free in
$\mathcal D'(\mathbb R^3)$. Hence, $\operatorname{div}\widetilde{u}_\nu^E
=
\operatorname{div}u^E-\operatorname{div}v^F
=
0
\qquad\text{in }\mathcal D'(\mathbb R^3)$.
Since $\widetilde{u}_\nu^E=u_{\mc S}^E$ in $\mc S_0$, we also have
$\mathbb D(\widetilde{u}_\nu^E)=0
\text{ in }\mc S_0$. It follows that $\widetilde{u}_\nu^E
\in
L^\infty(0,T;\mc V)
\subset
L^2(0,T;\mc V)$. Finally, the regularity of the Euler solution gives $u^E\in C([0,T];\mc H^E)$,
where $u^E$ is understood as the global extension introduced above.
Together with \eqref{eq:fake-layer-C1H}, which gives $v^F\in C^1([0,T];\mc H^E)$,
we obtain
\[
\widetilde{u}_\nu^E
=
u^E-v^F
\in
C([0,T];\mc H^E)
\cap
L^2(0,T;\mc V).
\]
This proves \eqref{fake-layer-corrected-Euler} and completes the proof.
\end{proof}
}

Here and in the sequel, we make use of the Landau notations $o(1)$ and $O(1)$ for quantities respectively converging to $0$ and bounded with respect to the limit $\veps, \,\nu \to 0$.

\subsection{Proof of Theorem \ref{main-thm}}
This section is devoted to prove the first main result of our paper. 

\begin{proof}
Let us recall the relative energy inequality \eqref{Rel-energy} given by Theorem \ref{rel-energy}.
Regarding suitable test function in the inequality \eqref{Rel-energy}, we can take $U=u^{E}-v_{\mc{F}}$, $r=1$, and  in particular they satisfy the required property \eqref{rel:test}. 
 With these test functions, from \eqref{Expression-Relative-energy}, we write 
\begin{align}\label{Expres-E-1,U}
\mathcal E([\rho,u]  \mid [1,U])(t) = \frac{1}{2}\int\limits_{\mathbb R^3} \rho|u-U|^2 (t) \, dy + \frac{1}{\veps^2} \int\limits_{\mc F_0} \Big(  H(\rho) - \frac{\partial H}{\partial \rho}(1)(\rho-1) - H(1) \Big) (t) dy , 
\end{align}
and the remainder term \eqref{Remainder-term} of the relative energy inequality  has the following expression: 
\begin{multline}\label{Remainder-term-Sec-4.3}
\mathcal R(\rho, u, U)(t) =  
\nu  \int\limits_{\mc{F}_0}   \sigma(u) :  \mathbb D(U) \, dy  +  \int\limits_{\mc F_0}\left[ \rho \frac{\partial U}{\partial s}\cdot(U-u) + \rho(u-u_{\mc{S}}) \cdot \mathbb D(U)  \cdot (U-u) \right]  
\, dy  \\
 + m (\ell_{U}-\ell)\cdot \ell'_U  +  J_0 (\omega_U -\omega)\cdot \omega'_U 
 + \int\limits_{\mc{F}_{0}} \operatorname{det} (\rho \omega, u, U)\,dy - 
\operatorname{det}(m \ell, \omega,\ell_{U}) 
 -  \operatorname{det}(J_{0}\omega,\omega,\omega_{U}) .
\end{multline}
Our main goal is to show that  
 \begin{align}\label{goal-intermidiate}
|\mc R(\rho, u, u^E-v^F)(t)| \lesssim \mc{E}([\rho, u] \mid [1, u^E-v^F])(t) + o(1) , 
 \end{align}
 where  $o(1)$ is with respect to $L^1(0,T)$. To prove \eqref{goal-intermidiate}, we begin with the following: 
\begin{align*}
 \bigg| \nu \int\limits_{\mc F_0} \sigma(u) : \mathbb D(u^E-v^F) \, dy  \bigg|  
&\leq  \nu^{1/2} \|\nabla u^E\|_{L^2(\mc F_0)} \left(\nu^{1/2}\|\sigma (u)\|_{L^2(\mc F_0)}\right) + \nu \|\nabla v^F\|_{L^2(\Gamma_{c\nu})} \|\sigma (u)\|_{L^2(\Gamma_{c\nu})} \\
&\leq   \nu^{1/2}\|\nabla u^E\|_{L^2(\mc F_0)} \left(\nu^{1/2}\|\sigma (u)\|_{L^2(\mc F_0)}\right) + C \nu^{1/2}  \|\sigma (u)\|_{L^2(\Gamma_{c\nu})} ,
\end{align*} 
where we have used  Proposition \ref{Proposition-Fake-Layer}--point \eqref{fake-layer-gradient}. 
Then, by the uniform bound \eqref{Uniform-bound-compressible}  and the assumption \eqref{Thm-assum-2}, one has 
\begin{align}\label{4-limit-1}
\nu \int\limits_0^t\int\limits_{\mc F_0} \sigma(u): \mathbb D(U) \, dy \, ds \longrightarrow 0 \, \text{ as } \, \nu, \veps \to 0.  
\end{align}
Let us now  compute  the following:
\begin{multline}\label{Term-2-Remainder}
  \int\limits_{\mc F_0}\left[ \rho \frac{\partial U}{\partial s}\cdot(U-u) + \rho(u-u_{\mc{S}}) \cdot \nabla U  \cdot (U-u) \right]  \, dy  
+  m (\ell_{U}-\ell)\cdot \ell'_U  +  J_0 (\omega_U -\omega)\cdot \omega'_U 
  + \int\limits_{\mc{F}_{0}} \rho \operatorname{det} (\omega, u, U)\,dy \\- 
m \operatorname{det}( \ell, \omega,\ell_{U}) 
 -  \operatorname{det}(J_{0}\omega,\omega,\omega_{U})
  = \Bigg[ \int\limits_{\mc F_0} \rho \left[ \frac{\partial u^E}{\partial s} +  (u^E-u^E_{\mc S}) \cdot \nabla u^E \right] \cdot (U-u) \, dy + \int\limits_{\mc F_0} \rho \det(\omega^E, u^E, U-u) \, dy \Bigg]  \\
  + \bigg[m (\ell^E)' \cdot (\ell_U - \ell) 
   + J_0 (\omega^E)' \cdot (\omega_U - \omega)  - m \det (\ell^E, \omega^E, \ell_U-\ell) - \det(J_0 \omega^E, \omega^E, \omega_U -\omega) \bigg]  \\
   + \int\limits_{\mc F_0} \rho \left[ (u-u^E) - (u_{\mc S}-u^E_{\mc S})\right] \cdot \nabla u^E \cdot (U-u) \, dy   + \Bigg[\int\limits_{\mc{F}_{0}} \rho \operatorname{det}  (\omega, u, U)\,dy - 
m \operatorname{det}( \ell, \omega,\ell_{U})
 -  \operatorname{det}(J_{0}\omega,\omega,\omega_{U})  \Bigg] \\ - \Bigg[\int\limits_{\mc{F}_{0}} \rho \operatorname{det}  (\omega^E, u^E, U-u)\,dy -
m \operatorname{det} (\ell^E,\omega^E,\ell_U-\ell) 
 -  \operatorname{det}(J_{0}\omega^E,\omega^E,\omega_U-\omega) \Bigg]  
    - \int\limits_{\mc F_0} \rho \frac{\partial v^F}{\partial s} \cdot (U-u) \, dy \\
- m (\ell_{v^F})' \cdot (\ell_U - \ell) - J_0 (\omega_{v^F})' \cdot  (\omega_U - \omega) - \int\limits_{\mc F_0} \rho (u-u_{\mc S}) \cdot \nabla v^F \cdot (U-u) \, dy 
 : = \sum_{j=1}^9 \mc R_{j}(t) . 
\end{multline}
Below, we shall compute all the terms in the right-hand side of \eqref{Term-2-Remainder}.

\vspace{.2cm}
\noindent 
(i) We first observe that (using the equation   \eqref{chg of var momentum:fluidE} of the target system), 
\begin{multline}\label{Term-R-1}
 \int\limits_0^t \mc R_1(s) \, ds =   \int\limits_0^t\int\limits_{\mc F_0} \rho\left[ \frac{\partial u^E}{\partial s} +   (u^E-u^E_{\mc S}) \cdot \nabla u^E  \right] \cdot (U-u) \, dy \, ds 
  +  \int\limits_0^t \int\limits_{\mc F_0} \rho \det(\omega^E, u^E, U-u) \, dy \, ds \\
   = -\int\limits_0^t \int\limits_{\mc{F}_0}  \rho(U-u)\cdot \nabla p^E \, dy\, ds 
  = 
  \int\limits_0^t \int\limits_{\mc{F}_0} \rho(u-u_{\mc S}) \cdot \nabla p^E \, dy \, ds - 
  \int\limits_0^t \int\limits_{\mc{F}_0} \rho U \cdot \nabla p^E \, dy \, ds + \int\limits_0^t \int\limits_{\mc{F}_0}  \rho u_{\mc S} \cdot \nabla p^E \, dy\, ds \\
  =  \int\limits_0^t \int\limits_{\mc{F}_0} 
  \rho(u-u_{\mc S}) \cdot \nabla p^E \, dy\, ds -  \int\limits_0^t \int\limits_{\mc{F}_0} (\rho-1) U \cdot \nabla p^E \, dy\, ds - \int\limits_0^t \int\limits_{\mc{F}_0} U\cdot \nabla p^E \, dy\, ds \\
    +  \int\limits_0^t \int\limits_{\mc{F}_0} (\rho-1) u_{\mc S} \cdot \nabla p^E \,dy\, ds + \int\limits_0^t \int\limits_{\mc{F}_0}  u_{\mc S} \cdot \nabla p^E \,dy\,ds \\
  =  \int\limits_0^t \int\limits_{\mc{F}_0} 
  \rho(u-u_{\mc S})\cdot \nabla p^E \, dy\, ds 
  + \int\limits_0^t \int\limits_{\mc{F}_0} (\rho-1) u_{\mc S} \cdot \nabla p^E \, dy\, ds  -  \int\limits_0^t \int\limits_{\mc{F}_0} (\rho-1) U \cdot \nabla p^E \, dy\, ds \\+ \int\limits_0^t\int\limits_{\mc F_0} v^F \cdot \nabla p^E \, dy\, ds 
  \underbrace{- \int\limits_0^t \int\limits_{\partial \mc{S}_0} (p^E n) \cdot u^E \, d\Gamma \, ds 
  + \int\limits_0^t \int\limits_{\partial \mc{S}_0} (p^E n) \cdot u_{\mc S} \, d \Gamma \, ds}_{\mc R_{1,b}}.
\end{multline}

{
Let us focus on the first term in the right-hand side of \eqref{Term-R-1}. By means of the estimates $\eqref{Uniform-bound-compressible}_1$ and $\eqref{Uniform-bound-compressible}_3$, we have 
\begin{align}\label{Limit-rho-u}
\rho u \to \overline{\rho  u} \text{ weakly-}(*) \text{ in } L^\infty(0,T; L^{\frac{2\gamma}{\gamma+1}}(\mc F_0; \mathbb R^3)) \,  \text{ as } \veps, \nu \to 0. 
\end{align}
Moreover, for each compact set $K\subset \mc F_0$, we have 
\begin{align*}
\esssup_{t\in [0,T]}\int\limits_{K} |\rho u_{\mc S}|^{\frac{2\gamma}{\gamma+1}} \, dy < +\infty 
\end{align*}
Thus, for each compact subset $K$ of $\mc F_0$, one has  
\begin{align}\label{Limit-rho-u-s}
\rho u_{\S} \to \overline{\rho  u_\S}  \, \text{ weakly-}(*) \text{ in } L^\infty(0,T; L^{\frac{2\gamma}{\gamma+1}}(K; \mathbb R^3)) \,  \text{ as } \veps, \nu \to 0.
\end{align}
Now, we use the continuity equation \eqref{weakform-continuity}. Considering any test function $\psi\in C_c^\infty([0,T)\times \mc F_0)$  in \eqref{weakform-continuity}, we get
\begin{align}\label{R-1-1-continuity}
\int\limits_0^t \int\limits_{\mc F_0} \rho(u-u_{\S})\cdot \nabla \psi \, dy\, ds  = \bigg[\int\limits_{\mc F_0} \rho  \psi \, dy\bigg]_0^t - \int\limits_0^t \int\limits_{\mc F_0} \rho \frac{\partial \psi}{\partial s} \, dy\, ds    =
\bigg[\int\limits_{\mc F_0} (\rho-1)  \psi \, dy\bigg]_0^t - \int\limits_0^t \int\limits_{\mc F_0} (\rho-1) \frac{\partial \psi}{\partial s} \, dy\, ds .
\end{align}
But, we compute (by using the estimates \eqref{Uniform-bound-compressible})
\begin{multline*}
\Bigg| \bigg[\int\limits_{\mc F_0} (\rho-1)  \psi \, dy\bigg]_0^t - \int\limits_0^t \int\limits_{\mc F_0} (\rho-1) \frac{\partial \psi}{\partial s} \, dy\, ds   \Bigg|  \\
 \leq 
C    \left( \veps \left\|  \bigg[\frac{\rho-1}{\veps}\bigg]_{\ess}\right\|_{L^\infty(0,T;L^2(\mc F_0))} +   \| [\rho]_{\res} \|_{L^\infty(0,T;L^\gamma(\mc F_0))}  + \|1_{\res}\|_{L^\infty(0,T;L^1(\mc F_0))}   \right) \leq C (\veps+\veps^2)    ,
\end{multline*}
which converges to zero as $\veps\to 0$. 
Using this in \eqref{R-1-1-continuity} and thanks  to the existence of limits \eqref{Limit-rho-u}--\eqref{Limit-rho-u-s}, we have 
\begin{align}\label{Div-free}
\operatorname{div} \left( \overline{\rho u} - \overline{\rho u_{\S} }  \right) = 0 \,  \text{ in } \, \mathcal D^\prime(\mathbb R^3) .
\end{align}
 In particular,   
 \begin{align*}
\int\limits_0^t \int\limits_{\mc F_0} \rho (u - u_{\S}) \cdot \nabla p^E \, dy\, ds \to 0 \, \text{ as } \veps, \nu \to 0 \, \text{ in } L^q(0,T) ,  \ 1\leq q< \infty,
 \end{align*}
 thanks to the fact  \eqref{Div-free} and the boundary condition $u=u_{\S}$ on $\partial \S_0$. 
 
 Let us compute the second  term in the right-hand side of \eqref{Term-R-1}.
To do that, we first consider the following smooth function for $\veps>0$, 
\begin{align}\label{function-chi-R}
\chi_\veps(y) = \begin{dcases}  y,   \ \  \text{if } y\in B_{\veps}:=B\bigg(0,\frac{1}{\veps^\alpha}\bigg) \cap \mc F_0, \\
 \frac{y}{{\veps^\alpha }|y|}  , \ \ \text{if } y \in \mc F_0 \setminus B_{\veps},  
\end{dcases}
\end{align}
and  choose the function 
\begin{align}\label{u_S-approx}
u_{\mc S, \veps}(t, y) = \ell(t) + \omega(t) \times \chi_\veps(y)  . 
\end{align}
Observe that, 
\begin{align*}
|u_{\S,\veps}|\leq C(1+ 1/\veps^\alpha) \text{ in } (0,T)\times \mc F_0 \ \text{ and } \ u_{\mc S, \veps} \to u_{\mc S} \ \text{ as $\veps\to 0$} .  
\end{align*} 
Then, we estimate the term 
\begin{equation*}
    \widetilde {\mc R}^\veps_{1} : = \int\limits_0^t \int\limits_{\mc F_0} (\rho-1) u_{\mc S, \veps} \cdot \nabla p^E \, dy\, ds,
\end{equation*} 
%In fact, it  is clear that 
%\begin{align*}
 %\mc R^\veps_{1,3} \to \mc R_{1,3} \, \text{ as } \, R \to \infty .
%\end{align*}
 and pass to the limit w.r.t. $\veps \to 0$.  In fact, we  have
\begin{align*}
&| \widetilde {\mc R}^\veps_{1} | \leq C \left(1+ \frac{1}{\veps^\alpha}  \right) \bigg|  \int\limits_0^t\int\limits_{\mc F_0} |\rho-1| |\nabla p^E| \, dy \, ds \bigg|  \\ 
& \leq \frac{C}{\veps^\alpha} \int\limits_0^t \Bigg( \veps \left\| \bigg[\frac{\rho-1}{\veps}\bigg]_{\ess}\right\|_{L^2(\mc F_0)} \|\nabla p^E \|_{L^2(\mc F_0)}  + \| [\rho]_{\res} \|_{L^\gamma(\mc F_0)} \|\nabla p^E\|_{L^\frac{\gamma}{\gamma-1}(\mc F_0)} \\ 
& \qquad \qquad \qquad + \|1_{\res}\|_{L^1(\mc F_0)} \|\nabla p^E\|_{L^\infty(\mc F_0)}  \Bigg)\, ds \\
& \leq \frac{C}{\veps^\alpha}  \Bigg( \veps \left\| \bigg[\frac{\rho-1}{\veps}\bigg]_{\ess}\right\|_{L^\infty(0,T;L^2(\mc F_0))} \|\nabla p^E \|_{L^1(0,T;L^2(\mc F_0))}  + \| [\rho]_{\res} \|_{L^\infty(0,T;L^\gamma(\mc F_0))} \|\nabla p^E\|_{L^1(0,T;L^\frac{\gamma}{\gamma-1}(\mc F_0))} \\ 
& \qquad \qquad \qquad + \|1_{\res}\|_{L^\infty(0,T;L^1(\mc F_0))} \|\nabla p^E\|_{L^1(0,T;L^\infty(\mc F_0))}  \Bigg) \\
& \leq \frac{C}{\veps^{\alpha}} (\veps + \veps^{2/\gamma} +\veps^2 ) ,
\end{align*}
where we have used the estimates \eqref{Uniform-bound-compressible} and the fact that $\nabla p^E \in C([0,T]; H^{s-1}(\mc F_0))$ with $s>\frac{5}{2}$ (see Theorem \ref{Thm-Euler-body}).
 Thus, $|\widetilde{\mc R}^\veps_{1}|\to 0$ as $\veps \to 0$ provided $\alpha < \min\{1,  \frac{2}{\gamma}\}$.

 }

%In the above construction,   one may particularly choose $R=\frac{1}{\sqrt{\veps}}$ so that 
%\begin{align}
%\lim_{\veps \to 0} \lim_{R\to \infty} \mc R^R_{1,4} \leq \lim_{\veps \to 0} C \sqrt{\veps} =0. 
%\end{align}

The third term in the right-hand side of \eqref{Term-R-1} can also be bounded by $C\veps$. Indeed, we estimate as follows (recall that $U=u^E-v^F$):
%\begin{align*}
%\mc R^\veps_{1,3}: = \int\limits_0^t \int\limits_{\mc F_0} (\rho-1) U \cdot \nabla p^E \, dy\, ds 
%\end{align*}
\begin{align}\label{term-rho-1-U-p-E}
&\bigg| \int\limits_0^t \int\limits_{\mc F_0} (\rho-1) U\cdot \nabla p^E \, dy\, ds \bigg|  \notag \\ 
%& \leq 
%\bigg| \int\limits_0^t \int\limits_{\mc F_0} (\rho-1) u^E\cdot \nabla p^E \, dy\, ds \bigg|+ \bigg| \int\limits_0^t \int\limits_{\Gamma_{c\nu}} (\rho-1) v^F\cdot \nabla p^E \, dy\, ds \bigg| \\
&\leq \|U\|_{L^\infty(0,T)\times \mc F_0)} \Bigg( \veps \left\| \bigg[\frac{\rho-1}{\veps}\bigg]_{\ess}\right\|_{L^\infty(0,T;L^2(\mc F_0))} \|\nabla p^E \|_{L^1(0,T;L^2(\mc F_0))}  \notag \\
& \qquad \qquad 
+ \| [\rho]_{\res} \|_{L^\infty(0,T;L^\gamma(\mc F_0))} \|\nabla p^E\|_{L^1(0,T;L^\frac{\gamma}{\gamma-1}(\mc F_0))} 
 + \|1_{\res}\|_{L^\infty(0,T;L^1(\mc F_0))} \|\nabla p^E\|_{L^1(0,T;L^\infty(\mc F_0))}  \Bigg) \notag \\
& \leq C (\veps + \veps^{2/\gamma} + \veps^2) ,
\end{align}
using   Theorem \ref{Thm-Euler-body} and the estimates \eqref{Uniform-bound-compressible}. 
Therefore, the third term in the right-hand side of 
\eqref{Term-R-1} vanishes as $\veps \to 0$.

%provided $\alpha < \min\{\frac{1}{3}, \frac{2}{3\gamma}\}. $

On the other hand, by using Proposition \ref{Proposition-Fake-Layer}--point \eqref{fake-layer-zero-order}, the term $\int\limits_0^t \int\limits_{\mc F_0} v^F \cdot \nabla p^E \, dy\, ds$ can be made small and is of order $\nu^{1/2}$, and thus,
\begin{align}
\bigg|  \int\limits_0^t \int\limits_{\mc F_0} v^F \cdot \nabla p^E \, dy\, ds \bigg| \to 0 \, \text{ as } \, \nu \to 0. 
\end{align}

%{\color{magenta} By using the continuity equation for the compressible fluid, we have 
%\begin{align*}
%&\int\limits_0^t \int\limits_{\mc F_0} p^E \operatorname{div} (\rho (u-u_{\mc S}))\, dy \, ds=
%\int\limits_0^t \int\limits_{\mc F_0} p^E  \partial_t \rho \, dy\, ds \\
%& = - \int\limits_0^t \int\limits_{\mc F_0}  \partial_t p^E (\rho -1) + \int\limits_{\mc F_0} p^E (\rho-1) \bigg|_{s=0}^t\\
%& = - \veps \int\limits_0^t \int\limits_{\mc F_0}  \partial_t p^E \frac{(\rho -1)}{\veps} + \veps\int\limits_{\mc F_0} p^E \frac{(\rho-1)}{\veps} \bigg|_{s=0}^t . 
%\end{align*}
%In fact, we have 
%$$ \left\| \frac{\rho-1}{\veps} \right\|_{L^2(\mc F_0)} \leq C$$ which is enough to deduce  the terms in the above relation.
%}

\vspace*{.2cm}
\noindent 
(ii) Let us focus on the term $\mc R_2$. Using \eqref{chg of var linear momentum:bodyE}--\eqref{chg of var angular momentum:bodyE}, we have 
\begin{align}\label{Term-R-2}
&\int\limits_0^t \mc R_2(s) \, ds \notag \\
 =& \int\limits_0^t \bigg[m (\ell^E)' \cdot (\ell_U - \ell) 
   + J_0 (\omega^E)' \cdot (\omega_U - \omega)  - m \det (\ell^E, \omega^E, \ell_U-\ell) - \det(J_0 \omega^E, \omega^E, \omega_U -\omega) \bigg] \, ds \notag \\
  =& \int\limits_0^t \int\limits_{\partial \mc S_0} p^E n \cdot (\ell^E-\ell) \, d\Gamma \, ds  + \int\limits_0^t \int\limits_{\partial \mc S_0} (y \times p^E n ) \cdot (\omega^E - \omega) \, d\Gamma \, ds \notag \\
  & - \underbrace{\int\limits_0^t \bigg[m (\ell^E)' \cdot \ell_{v^F} 
   + J_0 (\omega^E)' \cdot \omega_{v^F}   - m \det (\ell^E, \omega^E,  \ell_{v^F}) - \det(J_0 \omega^E, \omega^E, \omega_{v^F} ) \bigg] \, ds}_{\mc R_{2,v^F}} .
\end{align}
Now, observe that if we add  \eqref{Term-R-1} and \eqref{Term-R-2}, the first two terms in the right-hand side of \eqref{Term-R-2} is canceled against the term $\mc R^{1,b}$ of \eqref{Term-R-1} (since $u^E=u^E_{\mc S}=\ell^E + \omega^E\times y$ and $u_{\mc S}=\ell + \omega\times y$ on $\mc S_0$).

Thanks to Proposition \ref{Proposition-Fake-Layer}, we further deduce that the term $\mc R_{2,v^F}$ is small. Indeed, one can obtain 
\begin{align}\label{sub-esti-Term-R-2}
|\mc R_{2,v^F}| \leq C \|v^F\|_{C([0,T];\mc H^E)}    \leq C \nu^{1/2}. 
\end{align}

Therefore, by means of above computations, we have 
\begin{align}\label{Add-T=1=2}
\bigg|\int\limits_0^t \mc R_1(s) \, ds + \int\limits_0^t \mc R_2(s) \, ds    \bigg| \to 0 \, \text{ as } \, \veps, \nu \to 0. 
\end{align}

\vspace{.2cm}
\noindent 
(iii)
Let us consider the term $\mc R_3$, we get (recall that $U=u^E-v^F$)
\begin{align}\label{Sub-Esti-2}
|\mc R_3(t)| &= \Bigg| \int\limits_{\mc F_0} \rho\left[(u-u^E) - (u_{\mc S}-u^E_{\mc S}) \right]\cdot \nabla u^E \cdot (U-u) \, dy   \Bigg| \notag \\
& \leq C\|\nabla u^E\|_{L^\infty((0,T)\times\mc F_0)} \bigg[  \int\limits_{\mc{F}_0}\rho |u-u^E|^2 \, dy  + \int\limits_{\Gamma_{c
\nu}} \rho |v^F|^2 \, dy \bigg] 
\notag \\
& \ + {C \bigg[\int\limits_{\mc F_0}\rho|u-u^E|^2 \, dy   + 
\int\limits_{\mc F_0}\rho|u_{\mc S} -u^E_{\mc S}|^2 |\nabla u^E|^2 \, dy +\int\limits_{\Gamma_{c\nu}} \rho|v^F|^2 \, dy \bigg] }.
\end{align}
To this end, let us compute the following integrals. 
First, we find 
\begin{align}\label{esti-rho-v^F}
 \int\limits_{\Gamma_{c\nu}} \rho |v^F|^2 \, dy 
& \leq C\|v^F\|^2_{L^\infty((0,T)\times \mathbb R^3)} \left(\|[\rho]_{\res}\|_{L^\gamma(\mc F_0)} + \|[1]_{\res}\|_{L^1(\mc F_0)}\right) \notag \\
& \quad +  C\|v^F\|^2_{L^\infty((0,T)\times \mathbb R^3)} \veps \left\| \bigg[\frac{\rho-1}{\veps}\bigg]_{\ess}\right\|_{L^2(\mc F_0)}  + \int\limits_{\Gamma_{c\nu}} |v^F|^2 \, dy \notag  \\
& \leq C (\veps^{2/\gamma} + \veps^2 +   \veps +  \nu) ,
\end{align}
thanks to the estimates \eqref{Uniform-bound-compressible} and Proposition \ref{Proposition-Fake-Layer}--point \eqref{fake-layer-zero-order}.  This yields that the above integral converges to $0$ as $\veps, \nu \to 0$. 

{
Secondly, we calculate the following:
\begin{align}\label{R-3-term-u-s}
\int\limits_{\mc F_0} \rho |u_{\mc S}-u^E_{\mc S}|^2 |\nabla u^E|^2 \, dy = \int\limits_{\mc F_0} (\rho-1) |u_{\mc S}-u^E_{\mc S}|^2 |\nabla u^E|^2 \, dy +  \int\limits_{\mc F_0} |u_{\mc S}-u^E_{\mc S}|^2 |\nabla u^E|^2 \, dy.
\end{align}
Let us focus on the first term in the  right-hand side of \eqref{R-3-term-u-s}. We estimate this term  with the  truncated velocity $u_{\mc S,\veps}$ for the rigid body  (as defined by \eqref{function-chi-R}--\eqref{u_S-approx}), and pass to the limit w.r.t. $\veps.$ Indeed, we have 
\begin{align}\label{R-3-tilde-eps}
&{\widetilde{\mc R}^{\veps}_3:=\int\limits_0^t \int\limits_{\mc F_0} (\rho -1) |u_{\S,\veps}-u^E_{\S,\veps}|^2 |\nabla u^E|^2 \, dy\, ds }  \notag \\
&\leq \frac{C}{\veps^{2\alpha}}  \Bigg( \veps \left\| \bigg[\frac{\rho-1}{\veps}\bigg]_{\ess}\right\|_{L^\infty(0,T;L^2(\mc F_0))} \|\nabla u^E\|_{L^2(0,T; L^4(\mc F_0))} + \| [\rho]_{\res} \|_{L^\infty(0,T;L^\gamma(\mc F_0))} 
\|\nabla u^E\|_{L^2(0,T; L^{\frac{2\gamma}{\gamma-1}}(\mc F_0))}
\notag \\
& \qquad \qquad + \|1_{\res}\|_{L^\infty(0,T;L^1(\mc F_0))}  \|\nabla u^E\|_{L^2(0,T; L^\infty(\mc F_0))}  \Bigg) \notag \\
& \leq \frac{C}{\veps^{2\alpha}} (\veps + \veps^{2/\gamma}+\veps^2),
\end{align}
where we have used the estimates \eqref{Uniform-bound-compressible} and the fact that $u^E \in C([0,T]; H^{s}(\mc F_0))$ for  $s>\frac{5}{2}$ (see Theorem \ref{Thm-Euler-body}).
 Thus, $|\widetilde{\mc R}^\veps_{3}|\to 0$ as $\veps \to 0$ provided $\alpha < \min\{\frac12,  \frac{1}{\gamma}\}$. 
 
 Next, the second term in the right-hand side of \eqref{R-3-term-u-s} can be computed as follows:
\begin{multline}\label{R-3-u-1}
\int\limits_0^t\int\limits_{\mc F_0} |u_{\mc S}-u^E_{\mc S}|^2 |\nabla u^E|^2 \, dy  
\leq  \int\limits_0^t \int\limits_{\mc F_0} \left(|\ell-\ell^E|^2 +  |\omega-\omega^E|^2 |y|^2\right)|\nabla u^E|^2 \, dy\, ds \\
  \leq \int\limits_0^t|\ell-\ell^E|^2 \int\limits_{\mc F_0}|\nabla u^E|^2 \, dy \, ds  + \int\limits_0^t |\omega-\omega^E|^2 \int\limits_{\mc F_0} |y|^2 |\nabla u^E|^2 \, dy \, ds  \\
   \leq \int\limits_0^t|\ell-\ell^E|^2 \|\nabla u^E\|^2_{L^2(\mc F_0)} \, ds +    \int\limits_0^t|\omega-\omega^E|^2 \|\nabla u^E\|^2_{L^2(\mc F_0, (1+|y|^2)^{\frac12} dy)} ds  \\ 
  \leq \|\nabla u^E\|^2_{L^\infty(0,T;L^2(\mc F_0))} \int\limits_0^t |\ell-\ell^E|^2 \, ds + 
\|\nabla u^E\|^2_{L^\infty(0,T;L^2(\mc F_0, (1+|y|^2)^{\frac12} dy))} \int\limits_0^t |\omega-\omega^E|^2 \, ds  
\leq C \int\limits _0^t \| u-u^E\|^2_{\mc H} \, ds 
\end{multline} 
(recall the definition of $\mc H$-norm associated to the inner product \eqref{inner product:W}).}

Thus, gathering the estimates \eqref{Sub-Esti-2}--\eqref{R-3-u-1},  we have
\begin{align*}
\bigg| \int\limits_0^t \mc R_3(s)\, ds \bigg| \leq C  \int\limits_0^t \|u-u^E\|^2_{\mc H} \, ds + o(1). 
\end{align*}

%using   Proposition \ref{Proposition-Fake-Layer}--point (2). 
%Therefore, 
%\begin{align}\label{Final-R-3}
%\int\limits_0^t \mc R_3(s) \, ds \leq C \int\limits_0^t \|(u-u^E)(s)|^2_{\mc H} \, ds + C  \nu^{\frac{\gamma-1}{\gamma}} . 
%\end{align}

\vspace{.2cm}
\noindent
(iv) We now consider the terms $\mc R_4$ and $\mc R_5$; summing up we have
\begin{align}\label{Term-R-3-R-4}
 \mc R_4 (t) + \mc R_5(t)
& = \Bigg[\int\limits_{\mc{F}_{0}} \rho \operatorname{det}  (\omega, u, U)\,dy - 
m \operatorname{det}( \ell, \omega,\ell_{U})  
 -  \operatorname{det}(J_{0}\omega,\omega,\omega_{U})  \Bigg]  \notag \\
&   - \Bigg[\int\limits_{\mc{F}_{0}} \rho \operatorname{det}  (\omega^E, u^E, U-u)\,dy -
m \operatorname{det} (\ell^E,\omega^E,\ell_U-\ell) 
 -  \operatorname{det}(J_{0}\omega^E,\omega^E,\omega_U-\omega) \Bigg]  \notag \\
& =  \Bigg[\int\limits_{\mc{F}_{0}} \rho \operatorname{det}  (\omega, u, u^E-v^F) \,dy - 
m \operatorname{det}( \ell,\omega,\ell^{E}-\ell_{v^F}) 
 -  \operatorname{det}(J_{0}\omega,\omega,\omega^E -\omega_{v^F}) \Bigg]  \notag  \\
 &  +  \Bigg[ \int\limits_{\mc{F}_{0}} \rho \operatorname{det}  (\omega^E, u^E, u)\,dy - 
m \operatorname{det}(\ell^E, \omega^E, \ell) 
 -  \operatorname{det}(J_{0}\omega^E,\omega^E,\omega)\Bigg]   
 \notag \\ 
 & +  \Bigg[ \int\limits_{\mc{F}_{0}} \rho \operatorname{det}  (\omega^E, u^E, v^F)\,dy - 
m \operatorname{det}( \ell^E, \omega^E,\ell_{v^F}) 
 -  \operatorname{det}(J_{0}\omega^E,\omega^E,\omega_{v^F})\Bigg]  \notag \\
&  = \Bigg[\int\limits_{\mc{F}_{0}} \rho \operatorname{det}  (\omega, u, u^E)\,dy +  
m \operatorname{det}(\omega,\ell, \ell^E) 
 -  \operatorname{det}(J_{0}\omega,\omega,\omega^E) \Bigg]  \notag \\ 
&  -  \Bigg[ \int\limits_{\mc{F}_{0}} \rho \operatorname{det}  (\omega^E, u, u^E)\,dy + 
m \operatorname{det}( \omega^E,\ell,\ell^E) 
 -  \operatorname{det}(J_{0}\omega^E,\omega,\omega^E)\Bigg] \notag \\ 
&  -  \Bigg[ \int\limits_{\mc{F}_{0}} \rho \operatorname{det}  (\omega, u, v^{F})\,dy - 
m \operatorname{det}(\ell,\omega,\ell_{v^F}) 
 -  \operatorname{det}(J_{0}\omega,\omega,\omega_{v^F})\Bigg] \notag \\
&    +  \Bigg[ \int\limits_{\mc{F}_{0}} \rho \operatorname{det}  (\omega^E, u^E, v^F)\,dy - 
m \operatorname{det}(\ell^E,\omega^E,\ell_{v^F}) 
 -  \operatorname{det}(J_{0}\omega^E,\omega^E,\omega_{v^F})\Bigg] .
\end{align}
Observe that the first two terms in the right-hand side of \eqref{Term-R-3-R-4} satisfy
\begin{multline}\label{term-R34-sub1}
\Bigg|\bigg(\int\limits_{\mc{F}_{0}} \rho \operatorname{det}  (\omega, u, u^E)\,dy + 
m \operatorname{det}( \omega,\ell,\ell^E) 
 -  \operatorname{det}(J_{0}\omega,\omega,\omega^E) \bigg) \\  
    - \bigg(  \int\limits_{\mc{F}_{0}} \rho \operatorname{det}  (\omega^E, u, u^E)\,dy + 
m \operatorname{det}( \omega^E,\ell,\ell^E) 
 -  \operatorname{det}(J_{0}\omega^E,\omega,\omega^E)\bigg)\Bigg|   \\
 %= \Bigg| \int\limits_{\mc F_0} \rho \operatorname{det}(\omega-\omega^E, u, u^E)\, dy + m \operatorname{det}(\omega-\omega^E,\ell, \ell^E) - \operatorname{det}(J_0\omega -J_0\omega^E, \omega, \omega^E ) \Bigg|  \\
  =\Bigg| \int\limits_{\mc F_0} \rho \operatorname{det}(\omega-\omega^E, u-u^E, u^E)\, dy + m \operatorname{det}(\omega-\omega^E,\ell-\ell^E, \ell^E) - \operatorname{det}(J_0\omega -J_0\omega^E, \omega-\omega^E, \omega^E )\Bigg|   \\
  \leq  { \int\limits_{\mc F_0} \rho |u-u^E|^2 \, dy + |\omega-\omega^E|^2 \int\limits_{\mc F_0} \rho |u^E|^2\, dy + C (|\ell-\ell^E|^2 + |\omega-\omega^E|^2) } \\
 \leq  \int\limits_{\mc F_0} \rho |u-u^E|^2 \, dy  + C (|\ell-\ell^E|^2 + |\omega-\omega^E|^2) + 
  |\omega-\omega^E|^2\int\limits_{\mc F_0} |u^E|^2 \, dy + 
   |\omega-\omega^E|^2\int\limits_{\mc F_0} (\rho-1)|u^E|^2 \, dy \\
  \leq  C\|u-u^E\|^2_{\mc H} + |\omega-\omega^E|^2\int\limits_{\mc F_0} (\rho-1)|u^E|^2 \, dy . 
\end{multline}
Let us estimate the integral $\int\limits_{\mc F_0} (\rho-1)|u^E|^2 \, dy$ in the above inequality. Indeed, we find 
\begin{align*}
&\int\limits_{\mc F_0} (\rho-1)|u^E|^2 \, dy \\
&\leq  \veps \left\| \bigg[\frac{\rho-1}{\veps}\bigg]_{\ess}\right\|_{L^2(\mc F_0))}  \| u^E \|^2_{L^4(\mc F_0)} + \| [\rho]_{\res} \|_{L^\gamma(\mc F_0))} \|u^E\|^2_{L^{\frac{2\gamma}{\gamma-1}}(\mc F_0) } + \|1_{\res}\|_{L^1(\mc F_0))}  \|u^E\|_{L^\infty(\mc F_0)}. 
%& \leq \frac{C}{\veps^{3\alpha}} (\veps + \veps^{2/\gamma}+\veps^2),
\end{align*}
Using \eqref{Uniform-bound-compressible} and the fact that $u^E \in C([0,T]; H^{s}(\mc F_0))$ for  $s>\frac{5}{2}$ (see Theorem \ref{Thm-Euler-body}), we further find that 
\begin{align}\label{term-R-45-eps}
\int\limits_0^t \int\limits_{\mc F_0} (\rho-1)|u^E|^2 \, dy \, ds \leq C (\veps + \veps^{2/\gamma}+\veps^2) , 
\end{align}
which eventually converges to $0$ as $\veps\to 0$.

Next, we have 
\begin{align}\label{term-R34-sub2}
&\Bigg|  \int\limits_{\mc{F}_{0}} \rho \operatorname{det}  (\omega, u, v^{F})\,dy - 
m \operatorname{det}( \ell,\omega,\ell_{v^F}) 
 -  \operatorname{det}(J_{0}\omega,\omega,\omega_{v^F}) \Bigg| \notag \\
 &\leq C \| \sqrt{\rho}u\|_{L^2(\mc F_0)}
 \|\sqrt{\rho}v^F\|_{L^2(\mc F_0)}  + C \|v_F\|_{\mc H^E} \notag \\
 & \leq C (\veps^{2/\gamma}+\veps^2+\veps + \nu)^{1/2} + C \nu^{1/2},
\end{align}
by using the uniform bound $\eqref{Uniform-bound-compressible}_1$, the estimate \eqref{esti-rho-v^F} and Proposition \ref{Proposition-Fake-Layer}--point \eqref{fake-layer-zero-order}. 

Finally, the last couple of terms in the right-hand side of \eqref{Term-R-3-R-4} enjoy the following estimate
\begin{align}\label{term-R34-sub3}
&\Bigg|  \int\limits_{\mc{F}_{0}} \rho \operatorname{det}  (\omega^E, u^E, v^{F})\,dy - 
m \operatorname{det}( \ell^E,\omega^E,\ell_{v^F}) 
 -  \operatorname{det}(J_{0}\omega^E,\omega^E, \omega_{v^F} ) \Bigg| \notag \\
 & 
 =\Bigg| \int\limits_{\Gamma_{c\nu}} (\rho -1)\operatorname{det}  (\omega^E, u^E, v^{F})\,dy + \int\limits_{\Gamma_{c\nu}}  \operatorname{det}  (\omega^E, u^E, v^{F})\,dy - 
m \operatorname{det}( \omega^E,\ell^E,\ell_{v^F}) 
 -  \operatorname{det}(J_{0}\omega^E,\omega^E,\omega_{v^F}) \Bigg| \notag \\
 &\leq  C \left( \veps \left\| \bigg[\frac{\rho-1}{\veps}\bigg]_{\ess}\right\|_{L^2(\mc F_0)} +  \|[\rho]_{\res}\|_{L^\gamma(\mc F_0)} + \|1_{\res}\|_{L^1(\mc F_0)}  \right) +
 C  \|v^F\|_{\mc H^E} \notag \\
 & \leq C( \veps + \veps^2)  +
 C \nu^{1/2} ,
\end{align}
thanks to $\eqref{Uniform-bound-compressible}_{2,3}$, and Proposition \ref{Proposition-Fake-Layer}--point \eqref{fake-layer-zero-order}.

Therefore, combining the estimates \eqref{term-R34-sub1}--\eqref{term-R34-sub3} in \eqref{Term-R-3-R-4}, we have 
\begin{align}\label{Final-esti-R-3-4}
\bigg|\int\limits_0^t \left(\mc R_4(s)+\mc R_5(s)\right) \, ds \bigg| 
%& \leq C \int\limits_0^t \left\|(u-u^E)(s)\right\|^2_{\mc H}   \|u^E(s)\|_{\mc H^E} \, ds  + C \int\limits_0^t \|u(s)\|^2_{\mc H} \|v^F(s)\|_{\mc H^E} \, ds \notag \\
%& + C
% \int\limits_0^t \|u^E(s)\|^2_{\mc H^E} \|v^F(s)\|_{\mc H^E} \, ds + C\veps  \notag \\
 \leq C \int\limits_0^t \|(u-u^E)(s)\|^2_{\mc H} \, ds + o(1) .
\end{align}

\vspace{.2cm}
\noindent 
(v)
Let us consider the terms $\mc R_6$, $\mc R_7$ and $\mc R_8$. 
We have
\begin{align}\label{Term-R-6-8-9}
&|\mc R_6(t)+ \mc R_7(t)+ \mc R_8(t)| \notag \\
=& \bigg|  \int\limits_{\mc F_0} \rho \frac{\partial v^F}{\partial s} \cdot (u^E-v^F-u) \, dy +   m \ell_{v^F}' \cdot (\ell^E -\ell_{v^F} - \ell) + J_0 \omega_{v^F}' \cdot  (\omega^E - \omega_{v^F} - \omega) 
\bigg| \notag \\
 \leq &
\bigg| \int\limits_{\mc F_0} \rho \frac{\partial v^F}{\partial s} \cdot (u^E-u) \, dy +   m \ell_{v^F}' \cdot (\ell^E  - \ell) + J_0 \omega_{v^F}' \cdot  (\omega^E  - \omega) 
\bigg| \notag \\
  & + \bigg| \int\limits_{\mc F_0} \rho \frac{\partial v^F}{\partial s} \cdot  v^F \, dy +   m \ell_{v^F}' \cdot \ell_{v^F} + J_0 \omega_{v^F}' \cdot  \omega_{v^F}  
\bigg| \notag \\
 =& \mc J_1(t) + \mc J_2(t)  .
\end{align}
Using Cauchy-Schwarz and Young's inequality, the term $\mc J_1$ can be estimated as 
\begin{align}\label{term-J-1}
\mc J_1 (t) &  = 
\bigg| \int\limits_{\mc F_0} \sqrt{\rho}\frac{\partial v^F}{\partial s} \cdot \sqrt{\rho} (u^E-u) \, dy +   \sqrt{m} \, \ell'_{v^F} \cdot \sqrt{m}\, (\ell^E-\ell) 
+ \sqrt{|J_0|} \,  \omega'_{v^F} \cdot \sqrt{|J_0|} \, (\omega^{E}-\omega) \bigg| \notag \\
& \leq 
\frac{1}{2} \bigg( \int\limits_{\mc F_0} \rho|u-u^E|^2 \, dy + m |\ell^E-\ell|^2 + J_0|\omega^E-\omega|^2   \bigg) + 
\frac{1}{2} \bigg( \int\limits_{\Gamma_{c\nu}} \rho\bigg|\frac{\partial v^F}{\partial s}\bigg|^2 \, dy + m |\ell'_{v^F}|^2 + J_0|\omega'_{v^F}|^2   \bigg) \notag \\
& \leq \frac{1}{2} \|u-u^E\|^2_{\mathcal H} +   
\frac{1}{2} \bigg\|\frac{\partial v^F}{\partial s}\bigg\|^2_{\mc H^E} + \frac{1}{2}\int\limits_{\Gamma_{c\nu}} (\rho-1)\bigg|\frac{\partial v^F}{\partial s}\bigg|^2 \, dy \notag \\ 
& \leq 
\frac{1}{2}\|u-u^E\|^2_{\mathcal H} + C \nu  + C {\bigg\|\frac{\partial v^F}{\partial s}\bigg\|_{L^\infty(\mathbb R^3)} } \bigg( \veps \bigg\| \bigg[\frac{\rho-1}{\veps}\bigg]_{\ess} \bigg\|_{L^2(\mc F_0)}  + \|[\rho]_{\res}\|_{L^\gamma(\mc F_0)} + \|1_{\res} \|_{L^1(\mc F_0)} \bigg) \notag \\
&\leq \frac{1}{2}\|u-u^E\|^2_{\mathcal H} + o(1),
\end{align}
where we have utilized Proposition \ref{Proposition-Fake-Layer}--point \ref{fake-layer-zero-order} and the bound \eqref{Uniform-bound-compressible}. 

On the other hand, $\mc J_2$ satisfies the following:
\begin{align}\label{term-J-2}
\mc J_2(t) & \leq 
\int\limits_{\mc F_0} (\rho-1)\bigg| v^F \cdot \frac{\partial v^F}{\partial s} \bigg|  \, dy + 
\int\limits_{\mc F_0} \bigg| v^F \cdot \frac{\partial v^F}{\partial s} \bigg|  \, dy 
+
\big| m \ell_{v^F}' \cdot \ell_{v^F}\big| + \big|J_0 \omega_{v^F}' \cdot  \omega_{v^F}\big| \notag \\
& \leq C \|v^F\|_{L^\infty(\mathbb R^3)} \bigg\| \frac{\partial v^F}{\partial s}\bigg\|_{L^\infty(\mathbb R^3)} \bigg( \veps \bigg\| \bigg[\frac{\rho-1}{\veps}\bigg]_{\ess} \bigg\|_{L^2(\mc F_0)}  + \|[\rho]_{\res}\|_{L^\gamma(\mc F_0)} + \|1_{\res} \|_{L^1(\mc F_0)} \bigg) \notag \\
& \quad  + \frac{1}{2}\|v^F\|^2_{\mc H^E} + \frac{1}{2}\bigg\| \frac{\partial v^F}{\partial s} \bigg\|^2_{\mc H^E} \notag \\
& \leq C (\veps + \veps^{2/\gamma} + \veps^2 ) + C \nu , 
\end{align}
using   Proposition \ref{Proposition-Fake-Layer}--point \ref{fake-layer-zero-order} and the uniform bound \eqref{Uniform-bound-compressible}.

Thus, combining the estimates \eqref{term-J-1}--\eqref{term-J-2} in \eqref{Term-R-6-8-9}, we have
\begin{align}\label{Final-R-6-7-8}
 \int\limits_0^t \left( \mc R_6(s) + \mc R_7 (s) + \mc R_8(s) \right) ds \leq 
 \int\limits_0^t \|(u-u^E)(s)\|^2_{\mc H} \, ds + o(1).  
\end{align}

\vspace{.2cm}
\noindent 
(vi) We now focus on the term $\mc R_9$: 
\begin{align} 
\mc R_9(t) & = \int\limits_{\mc F_0}\rho(u-u_{\mc S}) \cdot \nabla v^F \cdot (U-u) \, dy  \notag \\
& =  \int\limits_{\mc F_0}\rho(u-u_{\mc S}) \cdot \nabla v^F \cdot u^E \, dy  - 
\int\limits_{\mc F_0}\rho(u-u_{\mc S}) \cdot \nabla v^F \cdot v^F \, dy  -
\int\limits_{\mc F_0}\rho(u-u_{\mc S}) \cdot \nabla v^F \cdot u \, dy \notag \\
& = : \mc R_{9,1}(t) + \mc R_{9,2}(t) + \mc R_{9,3}(t) .
\end{align}
Set $w=u-u_{\mc S}$ and define
\[
A_\nu(s)
:=
\left\|
\frac{\rho w}{d^2}
\right\|_{L^{6/5}(\Gamma_{c\nu})},
\qquad
B_\nu
:=
\nu\int\limits_0^T A_\nu(s)^2\,ds.
\]
By the boundary-layer assumption \eqref{Thm-assum-2}, we have
$B_\nu\to0$ as $\nu\rightarrow 0$.
Recall from Proposition \ref{Proposition-Fake-Layer} that the boundary-layer corrector satisfies 
\[
\operatorname{supp}v^F\subset\Gamma_{c\nu},
\qquad
\|d^2\nabla v^F\|_{L^\infty((0,T)\times\mc F_0)}
\le C\nu,
\qquad
\|v^F\|_{L^\infty((0,T)\times\mc F_0)}
\le C.
\]
Since $|\Gamma_{c\nu}|\le C\nu$ and $u^E$ is uniformly
bounded in a fixed neighbourhood of $\partial\mc S_0$, we obtain
\[
\|u^E(s)\|_{L^6(\Gamma_{c\nu})}
+
\|v^F(s)\|_{L^6(\Gamma_{c\nu})}
\le C\nu^{1/6},
\qquad s\in[0,T].
\]
Here and below, $C$ is independent of the small parameters.

Now, for the first term,
H\"older's inequality in space gives
\begin{align*}
\left| \mc R_{9,1}(s)\, ds \right|=\bigg|
\int\limits_{\mc F_0}
\rho\bigl((w\cdot\nabla)v^F\bigr)\cdot u^E\,dy
\bigg|
&\le
\int\limits_{\Gamma_{c\nu}}
\frac{\rho|w|}{d^2}\,
d^2|\nabla v^F|\,|u^E|\,dy
\\
&\le
A_\nu(s)
\|d^2\nabla v^F(s)\|_{L^\infty(\Gamma_{c\nu})}
\|u^E(s)\|_{L^6(\Gamma_{c\nu})}
\\
&\le C\nu^{7/6}A_\nu(s).
\end{align*}
Integrating in time and applying the Cauchy--Schwarz inequality,
we infer that, for every $t\in[0,T]$,
\begin{align}
\bigg|\int\limits_{0}^t \mc R_{9,1}(s)\, ds \bigg|\leq \int\limits_0^t
\bigg|
\int\limits_{\mc F_0}
\rho\bigl((w\cdot\nabla)v^F\bigr)\cdot u^E\,dy
\bigg|\,ds
&\le
C\nu^{7/6}\int\limits_0^t A_\nu(s)\,ds
\nonumber\\
&\le
C\sqrt{T}\,\nu^{7/6}
\bigg(\int\limits_0^T A_\nu(s)^2\,ds\bigg)^{1/2}
\nonumber\\
&=
C\sqrt{T}\,\nu^{2/3}B_\nu^{1/2}
\longrightarrow0.
\label{Term-R-71}
\end{align}

For the second term, the same argument yields
\begin{align*}
\left|\mc R_{9,2}(s)\, ds \right|\bigg|
\int\limits_{\mc F_0}
\rho\bigl((w\cdot\nabla)v^F\bigr)\cdot v^F\,dy
\bigg|
&\le
\int\limits_{\Gamma_{c\nu}}
\frac{\rho|w|}{d^2}\,
d^2|\nabla v^F|\,|v^F|\,dy
\\
&\le
A_\nu(s)
\|d^2\nabla v^F(s)\|_{L^\infty(\Gamma_{c\nu})}
\|v^F(s)\|_{L^6(\Gamma_{c\nu})}
\\
&\le C\nu^{7/6}A_\nu(s).
\end{align*}
Consequently,
\begin{align}
\bigg|\int\limits_{0}^t \mc R_{9,2}(s)\, ds \bigg|\leq \int\limits_0^t 
\bigg|
\int\limits_{\mc F_0}
\rho\bigl((w\cdot\nabla)v^F\bigr)\cdot v^F\,dy
\bigg|\,ds
&\le
C\nu^{7/6}\int\limits_0^t A_\nu(s)\,ds
\nonumber\\
&\le
C\sqrt{T}\,\nu^{2/3}B_\nu^{1/2}
\longrightarrow0.
\label{Sub-est-R-92}
\end{align}
The integrability of the weighted momentum used here follows
from \eqref{Thm-assum-2}. Now, we focus on the term $\mc R_{9,3}$. \begin{align}\label{Term-R-73}
\bigg|\int\limits_0^t\mc R_{9,3}(s)\,ds\bigg|
&=
\bigg|
\int\limits_0^t\int\limits_{\Gamma_{c\nu}}
\rho(u-u_{\mc S})\cdot\nabla v^F\cdot u
\,dy\,ds
\bigg|
\notag\\
&=
\bigg|
\int\limits_0^t\int\limits_{\Gamma_{c\nu}}
\frac{\rho(u-u_{\mc S})}{d^2}
\cdot d^2\nabla v^F\cdot u
\,dy\,ds
\bigg| 
\notag\\
&\le
\|d^2\nabla v^F\|_
{L^\infty((0,T)\times\mc F_0)}
\left\|
\frac{\rho(u-u_{\mc S})}{d^2}
\right\|_
{L^2(0,t;L^{6/5}(\Gamma_{c\nu}))}
\notag\\
&\qquad\times
\|u\|_{L^2(0,t;L^6(\Gamma_{c\nu}))}
\notag\\
&\le
C\nu
\left\|
\frac{\rho(u-u_{\mc S})}{d^2}
\right\|_
{L^2(0,t;L^{6/5}(\Gamma_{c\nu}))}
\frac{C}{\sqrt{\nu}}
\notag\\
&\le
C\sqrt{\nu}
\left\|
\frac{\rho(u-u_{\mc S})}{d^2}
\right\|_
{L^2(0,T;L^{6/5}(\Gamma_{c\nu}))} ,
\end{align}
using the uniform bound $\eqref{Uniform-bound-compressible}_1$ and Proposition \ref{Proposition-Fake-Layer}--point \eqref{fake-layer-gradient}.

Combining the estimates \eqref{Term-R-71}--\eqref{Term-R-73} and the assumption \eqref{Thm-assum-2}, we deduce that
\begin{align}
{\int\limits_0^t \mc R_{9}(s)\, ds \longrightarrow 0 \ \text{ as } \ \nu, \veps \to 0. }
\end{align}

Finally, the term $\frac{1}{2}\int\limits_{\mathbb R^3} \rho |v^F|^2 \, dy$ in the expression of $\mc E([\rho, u] \mid [1, U])$ (given by \eqref{Expres-E-1,U}) can be computed in the way we addressed in \eqref{esti-rho-v^F}.

Therefore, by means of  all the above estimates and limiting behaviors,  the Relative energy inequality \eqref{Rel-energy} follows to:
\begin{align}\label{App-Gronwall}
&\frac{1}{2} \int\limits_{\mathbb R^3} \rho |u-u^E|^2 (t)  \, dy + \frac{1}{\veps^2} 
\int\limits_{\mc F_0} \left(H(\rho)-\frac{\partial H}{\partial\rho}(1)(\rho-1)-H(1)\right)(t)\, dy \notag  \\
\leq & 
\frac{1}{2} \int\limits_{\mathbb R^3} \rho_0 \left|\frac{q_0}{\rho_0}-u^E_0\right|^2   \, dy + \frac{1}{\veps^2} 
\int\limits_{\mc F_0} \left(H(\rho_0)-\frac{\partial H}{\partial\rho_0}(1)(\rho_0-1)-H(1)\right)\, dy  \notag \\
& + C \int\limits_0^t \int\limits_{\mathbb R^3} \rho |u-u^E|^2  \,dy\, ds 
+ o(1) ,
\end{align}
where we have used the fact   $\|u-u^E\|^2_{\mc H}=\int\limits_{\mathbb R^3}\rho |u-u^E|^2\, dy$ by definition (see Remark \ref{Remark-detailed-norms}). 

Then, using Gr\"{o}nwall's lemma in \eqref{App-Gronwall},  we have the required convergent result \eqref{main-result-thm-1} w.r.t.  $\veps, \, \nu\to 0$.  
In conclusion, the proof of Theorem \ref{main-thm} is complete.
\end{proof}

\vspace{.1in}
\noindent{\bf Acknowledgments:} K.B. and \v{S}.N. have been supported by the Praemium Academiae of \v{S}.N.  The Institute of Mathematics, CAS is supported by RVO:67985840. A.R is supported by the Grant RYC2022-036183-I funded by MICIU/AEI/10.13039/501100011033 and by ESF+. A.R has been partially supported by the Basque Government through the BERC 2022-2025 program and by the Spanish State Research Agency through BCAM Severo Ochoa CEX2021-001142-S and through project PID2023-146764NB-I00 funded by MICIU/AEI/10.13039/501100011033 and cofunded by the European Union.

%%%%%%%%%%%%%%%%%%%%%%%%%%%%%%%% 

\bibliography{references-singlimits}
\bibliographystyle{plain}

\end{document}